\documentclass[pdflatex,sn-mathphys-ay]{sn-jnl}%

\usepackage{graphicx}%
\graphicspath{{images/}}%
\usepackage{multirow}%
\usepackage{amsmath,amssymb,amsfonts}%
\usepackage{amsthm}%
\usepackage{mathrsfs}%
\usepackage[title]{appendix}%
\usepackage{xcolor}%
\usepackage{textcomp}%
\usepackage{manyfoot}%
\usepackage{booktabs}%
\usepackage{algorithm}%
\usepackage{algorithmicx}%
\usepackage{algpseudocode}%
\usepackage{listings}%
\usepackage{wrapfig}%

\usepackage{tikz}%
\usepackage{float}%
\usepackage[justification=centering,font=small,labelfont=bf]{caption}%
\usepackage{hyperref}%
\usepackage{comment}%
\usepackage{tabularx}%
\allowdisplaybreaks
\usepackage{enumitem}

\theoremstyle{thmstyleone}%
\newtheorem{thm}{Theorem}%
\newtheorem{prop}[thm]{Proposition}%
\newtheorem{lem}[thm]{Lemma}%
\newtheorem{cor}[thm]{Corollary}%
\newtheorem{prob}[thm]{Problem}%

\theoremstyle{thmstylethree}%
\newtheorem{defn}[thm]{Definition}%
\newtheorem{assumptions}[thm]{Assumptions}

\theoremstyle{thmstyletwo}%
\newtheorem*{rmk}{Remark}%
\newtheorem*{sktch}{Sketch of Proof}%

\DeclareMathOperator{\R}{\mathbb{R}}

\DeclareMathAlphabet{\mymathbb}{U}{BOONDOX-ds}{m}{n}%
\DeclareMathOperator{\1}{\mymathbb{1}}

\DeclareMathOperator{\tr}{tr}

\DeclareMathOperator{\vol}{vol}
\DeclareMathOperator{\ad}{ad}

\newcommand{\lb}{\langle}
\newcommand{\rb}{\rangle}
\newcommand{\lp}{\left(}
\newcommand{\rp}{\right)}
\newcommand{\of}{\circ}
\newcommand{\T}{\mathcal{T}}

\newcommand{\Nu}{\text{Nu}}
\let\Im\relax
\newcommand{\Im}{\text{Im}}
\newcommand{\leb}{\mathscr{L}}
\newcommand{\diffomega}{\text{Diff}(\Omega)}
\newcommand{\sdiffomega}{\text{Sdiff}(\Omega)}
\newcommand{\probomega}{\text{Prob}(\Omega)}
\newcommand{\vecomega}{\text{vec}(\Omega)}

\newcommand{\grad}{\text{grad}}
\newcommand{\A}{\mathcal{A}}
\let\a\relax
\newcommand{\a}{\mathfrak{a}}
\let\d\relax
\newcommand{\d}{\mathfrak{d}}
\let\P\relax
\DeclareMathOperator{\P}{\mathcal{P}}
\newcommand{\s}{\mathfrak{s}}
\newcommand{\Reach}{\mathcal{R}}
\let\H\relax
\newcommand{\H}{\mathcal{H}}

\newcommand{\D}{\mathcal{D}}

\newcommand{\V}{\mathcal{V}}

\newcommand{\ph}{\mathord{\raisebox{0.15ex}{$\scriptstyle\bullet$}}}

\begin{document}

\title[Fluid Mixing and Incompressible Optimal Transport]{Fluid Mixing and Incompressible Optimal Transport}

\author*[1]{\fnm{Max} \sur{Emerick}}\email{memerick@ucsb.edu}

\author[2]{\fnm{John} \sur{Igraszek}}\email{jigraszek@ucsb.edu}

\author[1]{\fnm{Bassam} \sur{Bamieh}}\email{bamieh@ucsb.edu}

\affil*[1]{\orgdiv{Department of Mechanical Engineering}, \orgname{University of California}, \orgaddress{\city{Santa Barbara}, \postcode{93106}, \state{California}, \country{USA}}}

\affil[2]{\orgdiv{Department of Mathematics}, \orgname{University of California}, \orgaddress{\city{Santa Barbara}, \postcode{93106}, \state{California}, \country{USA}}}

\abstract{The problem of incompressible fluid mixing is long-studied, yet many questions remain open. This paper aims to address the question ``what do efficient flow fields for mixing look like, and how do they behave?'' Inspired by the dynamic and geometric approach to optimal mass transport, we formulate a version of the fluid mixing problem as an optimal control problem for transport of a passive scalar, in which the dynamics are given by the continuity equation together with an incompressibility constraint. We then develop a geometric framework around this problem which parallels that of classical optimal transport: the optimal control problem induces a metric on the set of reachable states (analogous to the Wasserstein metric), and this metric carries a formal Riemannian structure when the optimal control objective is induced by an inner product. Using this structure, we decompose our problem into a series of more fundamental geometric problems and derive (formal) necessary conditions for optimality, geodesic equations, and gradient flows for mixing. Specializing the control objective to the kinetic energy recovers the incompressible Euler equation, connecting our framework to the geometric hydrodynamics of Arnold. The resulting problem appears to be ill-posed, however, suggesting that the kinetic energy is an unnatural measure of mixing effort. Specializing instead to the enstrophy, which we argue is better suited to mixing, yields the Euler-Poincar\'e (EPDiff) geodesic equation for the right-invariant homogeneous $\dot{H}^1$ metric on the group of volume-preserving diffeomorphisms, and we present numerical simulations of both geodesics and gradient flows in this case. Along the way, we generalize the classical Helmholtz decomposition of vector fields, rederive the Clebsch representation of incompressible flows, and develop a parallel to the Otto calculus of optimal transport.}

\keywords{Optimal transport, 
fluid mixing, 
optimal control, 
diffeomorphism groups, 
passive scalar mixing, 
geometric hydrodynamics, 
gradient flows, 
incompressible Euler equation}

\pacs[MSC Classification]{49Q22 (Primary), 76F25, 58D05, 58B20, 93C20, 49K20 (Secondary)}

\maketitle

\begingroup
\insert\footins{\footnotesize A preliminary version of this work appeared in the Proceedings of the 64th IEEE Conference on Decision and Control \citep{Emerick2025}. That conference article was a preliminary announcement of the results and did not include the proofs, numerical simulations, or any of the material regarding the weighted Helmholtz decomposition, energy- or enstrophy-penalized cases, gradient flows, or the Lagrangian lift presented here.\par}
\endgroup

\tableofcontents

\section{Introduction}

The efficient mixing of fluids is an old problem, traceable at least back to the work of \citet{Boussinesq1877} and \citet{Reynolds1883}. It is both mathematically interesting and practically important in a wide array of applications, including industrial chemical production and food processing, fuel-air mixing in combustion engines, microfluidic mixing in lab-on-chip devices, and understanding the circulation of particulates in the ocean and atmosphere. As a result, this problem has been extensively studied over the years, and yet, there remain many important open questions.

In modern terms, the fluid mixing problem is usually posed as some variant of the following: given two initially unmixed fluids in some particular configuration, find an input/flow field/feedback control law which mixes the two fluids efficiently.

Early on, fluid mixing was mostly understood as it related to turbulence \citep{Boussinesq1877,Reynolds1883,Taylor1921,Kolmogorov1941}. The prevailing view was that ``turbulent flows mixed well, while laminar flows did not''. In the classical era of dynamical systems, this viewpoint was then slightly refined: it was not turbulence that mixed fluids per se, but the existence of chaotic attractors in the flow \citep{Ottino1989,Aref1984}. Since turbulent flows often exhibit chaotic behavior, they tend to mix well. However, laminar flows can also exhibit chaotic behavior, and this is extremely important for mixing which occurs on small scales, such as in microfluidic devices.

Later, more quantitative studies posed the problem as one of optimization: ``find the \emph{most} efficient input/flow field/control law/etc. for mixing two fluids''. This requires one to define -- in rigorous mathematical terms -- what it means to ``mix'' two fluids, and what it means for this process to be ``efficient''. Neither of these definitions are at all obvious, and many measures of both fluid mixedness and mixing efficiency have been proposed over the years \citep{Thiffeault2012}.

Further complicating this problem is the fact that the equations of fluid mechanics -- most famously the Navier-Stokes equations, but the Euler equations as well -- are not fully understood, and depend in a delicate way on various problem parameters, including dimensionality, boundary conditions, and the regularity of allowable flow fields. \citet{Bressan2003}, for example, conjectured that the maximum achievable rate of mixing depends on the regularity of allowable flow fields, and several results along these lines have been proven since the conjecture was first posed \citep{crippa2008estimates,Lunasin2012,Seis2013,Iyer2014,Zlatos2017,Alberti2019}.

On the engineering side, many methods for the control of mixing processes have been proposed as well. To mention just a few of the most relevant works, various groups have considered control for mixing by: switching between a set of prescribed velocity fields \citep{DAlessandro1999,Mathew2007,Gubanov2010,Gubanov2012}, destabilization \citep{Bamieh2001}, boundary feedback control \citep{Aamo2003,Hu2018}, optimal control \citep{Liu2008,Couchman2010,Eggl2020}, gradient descent \citep{Lin2011}, and reinforcement learning \citep{Konishi2022}. Since there is a large literature on the fluid mixing problem, it is very difficult to be exhaustive in our review, and we refer the reader to \citet{Aref2017} for further references.

One question which has not been fully answered in the literature (and to which this paper aims to contribute something) is: ``independent of parameterizations, what do efficient mixing fields look like, and how do they behave?'' The motivation for asking this question is that it allows one not only to develop control strategies, but to do \emph{codesign}. In other words, a better understanding of mixing fields informs the design of the mixing devices themselves -- not just the strategies used to control them.

In this paper, we develop an abstract approach to this problem which is inspired by the dynamic/geometric approach to optimal mass transport (following, e.g., \citealp{Benamou2000,Otto2001}). The key connection here is a reformulation of the mixing problem as an optimal control problem for transport of a passive scalar. In fact, the main problem that we study can essentially be considered ``dynamic optimal transport with an incompressibility constraint''. While we are not the first to investigate something like this (see, e.g., \citealp{Brenier1989,Brenier1999,Liu2019,Baradat2019}), the connection to the fluid mixing problem remains relatively undeveloped. (We note that one other very recent paper \citep{Hu2025} also makes this connection, but with a different problem formulation and an emphasis on computational rather than structural questions.) In this way, this paper sits at the confluence of several lines of work descending from \citet{Arnold1966} and including \citet{Ebin1970,marsden1983coadjoint,Brenier1989,Brenier1999,Otto2001} and more recently \citet{Brockett2007,Agrachev2009,khesin2013geometry,gallouet2018camassa,Agrachev2020,khesin2021geometric,Raginsky2024,Abdelgalil2025,khesin2025universal,bauer2026wasserstein} regarding various connections between the geometry and control of diffeomorphism groups, optimal transport, and fluid mechanics.

The rest of this paper proceeds as follows. We formulate (Section \ref{problem_formulation_sec}) a version of the fluid mixing problem as an optimal control problem of transporting a passive scalar, making contact with both the areas of classical fluid mixing and dynamic optimal transport. We show (Section \ref{problem_structure_sec}) that this optimal control problem induces a metric on the set of reachable states, analogous to the Wasserstein metric of classical optimal transport. This metric enables a decomposition of our problem into a series of more fundamental geometric problems. When the optimal control objective is induced by an inner product, this metric also carries the formal structure of a Riemannian metric, which affords us the apparatus to compute (formal) necessary conditions for optimality later. We interpret maximally efficient mixing fields as geodesics in this Riemannian space. We also provide a generalization of the classical Helmholtz decomposition called the \emph{weighted Helmholtz decomposition}, which helps us to interpret the geometry. Using the Riemannian structure, we derive necessary conditions for optimality/geodesic equations in the general case (Section \ref{general_k_sec}), and treat in detail both the energy-penalized (Section \ref{example_energy_sec}) and enstrophy-penalized (Section \ref{example_enstrophy_sec}) cases. In the energy-penalized case, the geodesic equations reduce to the incompressible Euler equation, making contact with both the classical theory of geometric hydrodynamics of \citet{Arnold1966} and the \emph{Otto calculus} of optimal transport \citep{Otto2001}. This derivation also recovers the classical Clebsch representation of an incompressible flow, with the notable feature that the Clebsch potentials are not posited as an ansatz, but emerge from the optimal control problem as the transported density and its costate. While making for a fascinating connection, the Euler equation is notoriously poorly behaved, ultimately meaning that the energy-penalized version of our problem is ill-posed. We discuss implications for fluid mixing before moving on to the enstrophy-penalized case, which is expected to be much better behaved. In the enstrophy-penalized case, the geodesic equations reduce to the Euler-Poincar\'e (EPDiff) geodesic equation for the right-invariant homogeneous $\dot{H}^1$ metric on the group of volume-preserving diffeomorphisms -- a limiting case of the Lagrangian-averaged Euler (LAE-$\alpha$) family of equations. As another application of the Riemannian structure, we investigate (Section \ref{gradient_flows_sec}) gradient flows, which provide a more computationally tractable, ``greedy'' method for control of mixing than the full optimal control solutions. We provide numerical simulations of both the geodesic equations and gradient flows in the enstrophy-penalized case. We then explain (Section \ref{lagrangian_lift_sec}) a unifying perspective on our results and the theories of optimal transport and geometric hydrodynamics in terms of diffeomorphism groups and group actions on homogeneous spaces. We finally conclude (Section \ref{conclusion_sec}) with a brief review and a discussion of the many possible future directions.

We emphasize at the outset that our results are formal and structural in nature, in the spirit of \citet{Otto2001}. We leave the state and input spaces, the control-effort norm, and the mixedness metric unspecified (see Section~\ref{problem_formulation_sec}), we are deliberately loose about the regularity of the density, the velocity field, and other key objects, and we do not address the well-posedness of any of the stated problems. We regard this as a feature of the approach: the structure which emerges then sheds light on which analytic assumptions may or may not be compatible with the framework.

\section{Problem Formulation} \label{problem_formulation_sec}

We formulate our version of the fluid mixing problem as an optimal control problem of transporting a passive scalar. Part of the challenge of the fluid mixing problem is that there is no clear most natural formulation, and so our approach is ``framework-first'', in that we try to make minimal assumptions and abstract as much as possible.

In the settings which we consider, we begin with two fluids, A and B, which are contained in some domain $\Omega$. $\Omega$ is assumed to be a connected and bounded set in $\R^n$ (or more generally, an $n$-dimensional Riemannian manifold, possibly with boundary). The $n = 2$ and $n=3$ cases are of course the most practically relevant, but we refrain from making any assumptions here for now.

We will model the fluids A and B as passive scalars $\rho_A$ and $\rho_B$, assumed to be functions from $\Omega \to \R$. We will interchangeably refer to $\rho_A$ and $\rho_B$ as \emph{passive scalars} or as \emph{densities} (as in ``probability densities'' or ``mass densities''). We emphasize, however, that $\rho_A$ and $\rho_B$ are \emph{passive}, that is, they do not participate in the underlying dynamics. In fact, one advantage of modeling the fluids this way is that it allows one to get \emph{away} from particular dynamic models such as Euler or Navier-Stokes. In the most classical formulation of the mixing problem, $\rho_A$ and $\rho_B$ might be indicator functions supported on complementary sets, but there are many reasons to want to relax this assumption. For notational purposes, we will use the symbol $\P$ to denote the space of allowable densities.

Our objective is to mix the two fluids, i.e., to bring the densities $\rho_A$ and $\rho_B$ ``close together'' in some appropriate sense. This ``closeness'' or ``mixedness'' is quantified by a metric $d$ which is defined on the space of densities $\P$. Thus, $A$ and $B$ will be well-mixed if $d(\rho_A,\rho_B)$ is small, and poorly mixed if it is large. Various metrics $d$ have been proposed in the literature, including various measures of entropy, the so-called ``mix-norm'', negative-order Sobolev norms, and the Wasserstein distance from optimal mass transport \citep{Thiffeault2012}. But again, we will refrain from making specific assumptions.

One assumption that we will make is that $\rho_A$ and $\rho_B$ fully determine each other (for example, if $\rho_B = 1 - \rho_A$). Then, instead of trying to control two densities to be close to each other, we may focus on just one density -- say, $\rho_A$ -- and try to control it towards some desired target $\rho_*$. In the classical mixing problem, $\rho_*$ might be the ``perfectly-mixed'' (i.e. uniform) density having the same total mass as $\rho_A$, however, there are many reasons to want to relax this assumption. Thus, in what follows, we will drop the subscript A and speak only of controlling the single density $\rho$.

Mixing in the real world typically occurs in two phases: first in an advection-dominated phase and then in a diffusion-dominated phase. It is this first advection-dominated phase which is primarily responsible for large-scale mixing (and is the process that we, as designers, have the most control over), while diffusion is primarily responsible for small-scale mixing and becomes significant only after large-scale mixing has already occurred. Thus, we will focus on the case of purely advective mixing in this paper. Here, the dynamics of the density $\rho$ are governed by the \emph{continuity equation}
\begin{equation} \label{continuity_eqn} 
	\partial_t \rho_t ~=~ - \nabla \cdot (\rho_t v_t) ,
\end{equation}
where $\partial_t$ denotes the time derivative, $t \in [0,T]$ the time variable, $\nabla \cdot$ the divergence operator, and $v$ the flow field/velocity field which transports the fluid. We emphasize that the subscript $t$ in $\rho_t$ and $v_t$ denotes a time \emph{index} rather than a time \emph{derivative}.

Often, the fluid is very nearly incompressible, which may be expressed by the additional \emph{incompressibility constraint}\footnote{\label{incompressibility_footnote}The zero-divergence constraint on $v$ is equivalent to preservation of volume under the flow (and thus incompressibility) by the divergence theorem.}
\begin{equation} \label{incompressibility_eqn}
	\nabla \cdot v_t \equiv 0 .
\end{equation}
In this case, the dynamics \eqref{continuity_eqn} may be equivalently written
\begin{equation}
    \partial_t \rho_t ~=~ - \nabla \rho_t \cdot v_t
\end{equation}
using the product rule $\nabla \cdot(\rho v) = \nabla \rho \cdot v + \rho \nabla \cdot v$ together with \eqref{incompressibility_eqn}. It is this constraint which gives fluid mixing much of its structure and associated challenges. In addition to incompressibility, allowable flow fields $v$ will also need to satisfy some appropriate boundary conditions, typically either the \emph{no-flux} condition $v \cdot \hat{n} \equiv 0$ on $\partial \Omega$ or the \emph{no-slip} condition $v \equiv 0$ on $\partial \Omega$, depending on the model considered. But again, we will postpone this choice, using the symbol $\V$ to denote the space of allowable vector fields. We will assume that $\V$ forms a vector space.

To define rigorously the notion of ``efficient'' mixing, we will introduce a norm $\| \cdot \|$ on the space of allowable vector fields $\V$. The norm $\| \cdot \|$ is taken as a proxy for ``mixing effort'' -- a large norm meaning an aggressive flow field and a small norm meaning a conservative one. An ``efficient'' flow field is thus one that makes $d(\rho,\rho_*)$ and $\| v \|$ small simultaneously. Various norms $\| \cdot \|$ have been proposed in the literature \citep{Thiffeault2012}, including the kinetic energy $\| \cdot \|_{L^2}$, viscous dissipation energy/enstrophy $\| D ( \cdot ) \|_{L^2} \simeq \| \cdot \|_{H^1}$, and ``palenstrophy'' $\| \Delta ( \cdot ) \|_{L^2} \simeq \| \cdot \|_{H^2}$. But again, we will refrain from making specific assumptions here.

Putting all of this together, we formulate our version of the fluid mixing problem as an optimal control problem which trades off between fluid mixedness and mixing effort as follows.

\begin{prob}[Optimal Mixing] \label{mixing_prob}
	Given initial and target densities $\rho_0,\rho_* \in \P$, time horizon $T$, norm $\| \cdot \| : \V \to \R$, metric $d: \P \times \P \to \R$, and trade-off parameter $\alpha >0$, solve
	\begin{equation} \label{mixing_prob_eqn}
		\inf_{\rho, v} \, \int_0^T \| v_t \|^2 \, dt \, + \, \alpha d^2(\rho_T,\rho_*) \quad \text{s.t.} \quad \begin{cases}
			\partial_t \rho = - \nabla \cdot (\rho v) \\
			\nabla \cdot v = 0 \\
			\rho_0 ~ \text{fixed}
		\end{cases} \hspace{-4mm}
	\end{equation}
	We refer to $\rho$ as the state, $\P$ as the state space, $v$ as the input, and $\V$ as the input space.
\end{prob}

A visual depiction of an example solution to this problem is shown in Figure~\ref{fig:mixing_schematic}.

\begin{figure}[ht]
    \centering
    \includegraphics[width=\linewidth]{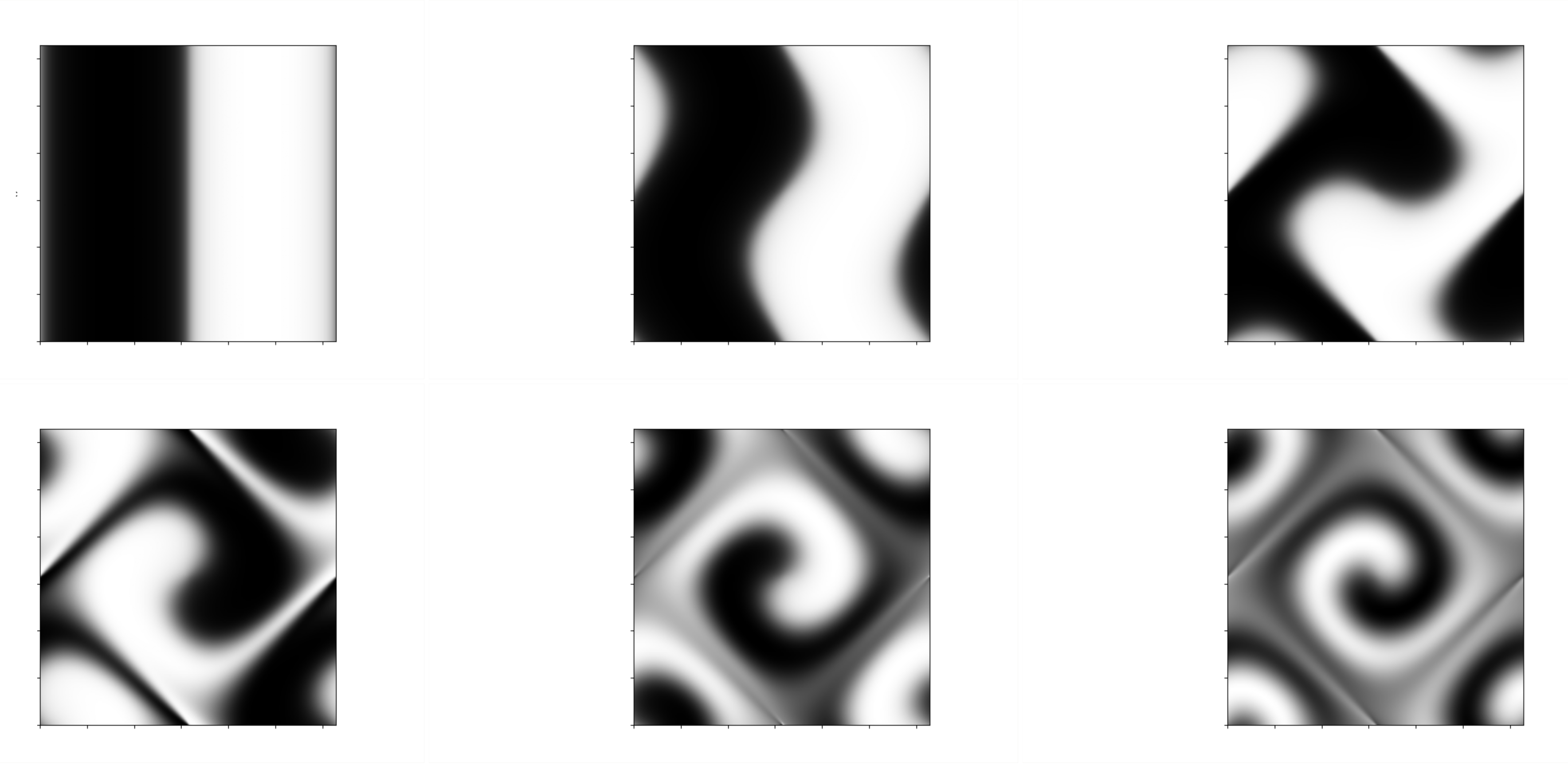}
    \caption{An example solution to Problem \ref{mixing_prob}. The two phases $\rho_A =: \rho$ and $\rho_B = 1 - \rho$ are initially unmixed (top left). A velocity field $v$ is applied and the two fluids begin to mix. Equivalently, $\rho$ begins to move towards the ``fully-mixed'' state $\rho_* = 0.5$, as measured in some appropriate mixedness metric $d$. The example here occurs over a two-dimensional domain with periodic boundary conditions.
    }
    \label{fig:mixing_schematic}
\end{figure}

This problem generalizes the classical fluid mixing problem, since by taking $\rho_0$ to be an indicator function on some set and $\rho_*$ to be the uniform density with the same total mass as $\rho_0$, we recover a classical formulation. However, the connection to optimal transport should also be clear, since by taking $\| v_t \| = \| v_t \|_{L^2(\rho_t)}$ and replacing the ``soft'' penalty $\alpha d^2(\rho_T,\rho_*)$ with the ``hard'' constraint $\rho_T = \rho_*$, we recover the dynamic formulation of optimal mass transport due to \citet{Benamou2000}, but with an additional incompressibility constraint $\nabla \cdot v = 0$.

There is a rich geometric structure in the above problem which we hope to effectively communicate in this paper. In short, this problem induces a metric on the set of reachable states, and this metric has a (formal) Riemannian structure if the norm $\| \cdot \|$ is induced by an inner product. This allows the above problem to be decomposed into a series of more fundamental geometric problems: computing geodesics, projections, critical points, and so on, as well as connecting it to the theory of geometric hydrodynamics, control of diffeomorphism groups, and the Otto calculus of optimal transport theory.

\section{Geometric Structure of Problem} \label{problem_structure_sec}

We now describe the geometric structure of Problem \ref{mixing_prob}. We begin (Section \ref{reachability_sec}) by describing the reachability structure of the problem, where it is shown that reachability is an equivalence relation on the state space $\P$. We construct (Section \ref{metrics_decomp_sec}) a metric on the reachable set which is induced by the optimal control problem, and use this metric to decompose Problem \ref{mixing_prob} into a series of geometric problems. We show (Section \ref{riemannian_structure_sec}) that this induced metric has the formal structure of a Riemannian metric when the norm $\| \cdot \|$ is induced by an inner product, which provides us with the apparatus needed to compute necessary conditions for optimality for our geometric problems later. Lastly, we introduce (Section \ref{helmholtz_sec}) a generalization of the classical Helmholtz decomposition called the \emph{weighted Helmholtz decomposition}, which provides additional interpretive apparatus for our results.

\subsection{Symmetries and Reachability} \label{reachability_sec}

The first question to be addressed is that of reachability: given two states $\rho_a$ and $\rho_b$, can we get from one to the other? We show that under the dynamics \eqref{continuity_eqn}-\eqref{incompressibility_eqn}, reachability is an equivalence relation on the state space $\P$. We also provide some basic necessary conditions for reachability. We begin with a definition.

\begin{defn}[Reachability] \label{reachability_def}
	Given two states $\rho_a, \rho_b \in \P$, we say that $\rho_b$ is \emph{reachable} from $\rho_a$ on the interval $[t_i,t_f]$ if there exists a trajectory $\rho: [t_i,t_f] \to \P$ and an input $v: [t_i,t_f] \to \V$ such that $\rho_{t_i} = \rho_a$, $\rho_{t_f} = \rho_b$, and the pair $(\rho,v)$ satisfy the dynamics \eqref{continuity_eqn}-\eqref{incompressibility_eqn} on the interval $[t_i,t_f]$ (possibly in some suitable weak sense).
\end{defn}

The system \eqref{continuity_eqn}-\eqref{incompressibility_eqn} possesses some strong symmetries, 
which will lead to similarly strong conclusions about the structure of the reachable set. The first symmetry is time-reparameterization.

\begin{lem}[Time-Reparameterization] \label{time_reparam_lem}
	Suppose that $(\rho,v)$ satisfy the dynamics \eqref{continuity_eqn}-\eqref{incompressibility_eqn} on $[t_i,t_f]$ with $\rho_{t_i} = \rho_a$ and $\rho_{t_f} = \rho_b$, and let $\sigma : [t_i',t_f'] \to [t_i,t_f]$ be any continuously differentiable and monotone nondecreasing function such that $\sigma(t_i') = t_i$ and $\sigma(t_f') = t_f$. Then the time-reparameterized functions $\tilde{\rho}_t := \rho_{\sigma(t)}$, $\tilde{v}_t := \dot{\sigma}(t) \, v_{\sigma(t)}$ satisfy the dynamics \eqref{continuity_eqn}-\eqref{incompressibility_eqn} on $[t_i',t_f']$, with $\tilde{\rho}_{t_i'} = \rho_a$ and $\tilde{\rho}_{t_f'} = \rho_b$.
\end{lem}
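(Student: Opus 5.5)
The plan is a direct verification by the chain rule, carried out first for smooth $(\rho,v)$ and then extended to weak solutions. I would check three things: the endpoint conditions, the incompressibility constraint \eqref{incompressibility_eqn}, and the continuity equation \eqref{continuity_eqn}. The endpoints are immediate, since $\tilde{\rho}_{t_i'} = \rho_{\sigma(t_i')} = \rho_{t_i} = \rho_a$ and likewise $\tilde{\rho}_{t_f'} = \rho_b$. Incompressibility is also immediate, because $\dot{\sigma}(t)$ is a scalar constant in space. Hence $\nabla \cdot \tilde{v}_t = \dot{\sigma}(t)\, \nabla \cdot v_{\sigma(t)} = 0$. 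For the same reason $\tilde{v}_t \in \V$, since $\V$ is a vector space and so any boundary condition encoded in $\V$ is preserved under scaling.

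The core step is the chain rule in time. If $t \mapsto \rho_t$ is differentiable, then
\[
\partial_t \tilde{\rho}_t = \dot{\sigma}(t)\, (\partial_s \rho_s)\big|_{s=\sigma(t)} = -\dot{\sigma}(t)\, \nabla \cdot \lp \rho_{\sigma(t)} v_{\sigma(t)} \rp = -\nabla \cdot \lp \tilde{\rho}_t \tilde{v}_t \rp .
\]
Here the scalar $\dot{\sigma}(t)$ is moved inside the spatial divergence. Monotonicity of $\sigma$ is not even needed for this identity; it only guarantees that $\sigma$ maps $[t_i',t_f']$ into $[t_i,t_f]$, given the endpoint values. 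Without monotonicity one would also have to say where $v$ is defined. Intervals where $\dot\sigma = 0$ simply give $\tilde v = 0$ and constant $\tilde\rho$, which is consistent.

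The main obstacle is the weak-solution setting, which Definition~\ref{reachability_def} allows. There I would take the distributional form: for test functions $\varphi \in C_c^\infty((t_i',t_f') \times \Omega)$,
\[
\int\!\!\int \tilde{\rho}_t\, \partial_t \varphi + \tilde{\rho}_t\, \tilde{v}_t \cdot \nabla \varphi \, dx\, dt = 0 .
\]
I would then change variables $s = \sigma(t)$. On the set where $\dot{\sigma} > 0$, $\sigma$ is locally invertible, and the pushed-forward test function $\psi(s,x) = \varphi(\sigma^{-1}(s),x)$ satisfies $\partial_s \psi = \partial_t\varphi / \dot\sigma$. The factor $\dot{\sigma}$ in $\tilde{v}$ and the Jacobian $ds = \dot{\sigma}\, dt$ cancel appropriately, which reduces the identity to the weak equation for $(\rho,v)$.

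To avoid the technicality that $\sigma^{-1}$ may fail to exist on flat pieces, I would prefer the equivalent time-integrated weak formulation instead: for a.e. pair of times, and for all $\phi \in C_c^\infty(\Omega)$,
\[
\int \rho_{s_2} \phi - \int \rho_{s_1}\phi = \int_{s_1}^{s_2}\!\!\int \rho_s v_s\cdot\nabla\phi \, dx\, ds .
\]
Substituting $s = \sigma(t)$ in the time integral via the one-dimensional change-of-variables formula (valid for $C^1$ $\sigma$, monotone or not) gives the same identity for $(\tilde\rho,\tilde v)$ directly.
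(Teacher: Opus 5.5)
Your proposal is correct and follows essentially the paper's route: substitute $(\tilde{\rho},\tilde{v})$ into \eqref{continuity_eqn}--\eqref{incompressibility_eqn}, apply the chain rule in time, and pull the spatially constant factor $\dot{\sigma}(t)$ through the divergence; your weak-solution extension, via the time-integrated formulation and the one-dimensional change of variables, is a sound elaboration of the paper's one-line proof. In that weak version, use the time-continuous representative of $\rho$ (which the endpoint evaluations in Definition~\ref{reachability_def} already presuppose), so that the integrated identity holds for every pair of times rather than only for a.e.\ pair, since otherwise, on an interval where $\sigma$ is constant, composing with $\sigma$ could land on an exceptional time.
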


\begin{proof}
	Results directly from substituting $(\tilde{\rho},\tilde{v})$ into \eqref{continuity_eqn}-\eqref{incompressibility_eqn} and applying the chain rule.
\end{proof}

The above lemma immediately implies the following.

\begin{lem} \label{horizon_lem}
	If $\rho_b$ is reachable from $\rho_a$ on the finite time horizon $[t_i,t_f]$, $-\infty < t_i < t_f < \infty$, then it is reachable from $\rho_a$ on any other finite time horizon $[t_i',t_f']$.
\end{lem}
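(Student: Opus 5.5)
The plan is to apply Lemma~\ref{time_reparam_lem} with an explicit affine reparameterization. Suppose $(\rho,v)$ satisfies \eqref{continuity_eqn}-\eqref{incompressibility_eqn} on $[t_i,t_f]$ with $\rho_{t_i}=\rho_a$ and $\rho_{t_f}=\rho_b$, and let $[t_i',t_f']$ be any finite horizon with $t_i'<t_f'$. We define
\begin{equation*}
\sigma(t) ~:=~ t_i + \frac{t_f - t_i}{t_f' - t_i'}\,(t - t_i'), \qquad t \in [t_i',t_f'].
\end{equation*}
This map is affine, hence continuously differentiable. Its derivative $\dot\sigma = (t_f-t_i)/(t_f'-t_i')$ is a positive constant, so $\sigma$ is monotone increasing. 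It also satisfies $\sigma(t_i')=t_i$ and $\sigma(t_f')=t_f$. Here we use that both horizons are finite and nondegenerate, so the ratio is a well-defined positive real number.

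Lemma~\ref{time_reparam_lem} then gives that $\tilde\rho_t=\rho_{\sigma(t)}$ and $\tilde v_t=\dot\sigma\,v_{\sigma(t)}$ satisfy the dynamics on $[t_i',t_f']$, with $\tilde\rho_{t_i'}=\rho_a$ and $\tilde\rho_{t_f'}=\rho_b$. The remaining check is that $\tilde v_t\in\V$ for each $t$. This holds because $\tilde v_t$ is a constant scalar multiple of $v_{\sigma(t)}\in\V$, and $\V$ is assumed to be a vector space. Likewise $\tilde\rho_t\in\P$, since it is simply a reindexing of the original trajectory. By Definition~\ref{reachability_def}, $\rho_b$ is therefore reachable from $\rho_a$ on $[t_i',t_f']$.

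The only point needing any care is the weak-sense interpretation of the dynamics in Definition~\ref{reachability_def}. Because $\sigma$ is an affine bijection with constant positive Jacobian, the change of variables $s=\sigma(t)$ in the space-time weak formulation only rescales test functions and integrals by constants. The weak formulation is therefore preserved, and I do not expect a genuine obstacle at this step.
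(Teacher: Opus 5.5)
Your proposal is correct and matches the paper's proof: it applies Lemma~\ref{time_reparam_lem} with exactly the same affine map $\sigma(t) = t_i + \frac{t_f - t_i}{t_f' - t_i'}(t - t_i')$. Your extra checks that $\tilde v_t \in \V$ and that the weak formulation is preserved are sound additions. The paper leaves both implicit.
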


\begin{proof}
	By definition, there exists a solution $(\rho,v)$ to \eqref{continuity_eqn}-\eqref{incompressibility_eqn} with $\rho_{t_i} = \rho_a$ and $\rho_{t_f} = \rho_b$. Define $\sigma(t) := (t - t_i')(t_f - t_i)/(t_f'-t_i') + t_i$ and $(\tilde{\rho},\tilde{v})$ as in Lemma \ref{time_reparam_lem}. Then $(\tilde{\rho},\tilde{v})$ satisfy \eqref{continuity_eqn}-\eqref{incompressibility_eqn} with $\tilde{\rho}_{t_i'} = \rho_a$ and $\tilde{\rho}_{t_f'} = \rho_b$.
\end{proof}

This time-reparameterization symmetry of solutions will be a key tool in many of the arguments that follow. Also, note that this justifies dropping the dependence on time when discussing reachability.

In addition to time-reparameterization symmetry, the system \eqref{continuity_eqn}-\eqref{incompressibility_eqn} also possesses time-reversibility symmetry.

\begin{lem}[Time-Reversibility] \label{time_rev_lem}
	If $(\rho,v)$ satisfy the dynamics \eqref{continuity_eqn}-\eqref{incompressibility_eqn} on $[t_i,t_f]$ with $\rho_{t_i} = \rho_a$ and $\rho_{t_f} = \rho_b$, then the time-reversed functions $\tilde{\rho}_t := \rho_{t_f - t}$, $\tilde{v}_t := - v_{t_f - t}$ satisfy the dynamics \eqref{continuity_eqn}-\eqref{incompressibility_eqn} on $[t_i,t_f]$, with $\tilde{\rho}_{t_i} = \rho_b$ and $\tilde{\rho}_{t_f} = \rho_a$.
\end{lem}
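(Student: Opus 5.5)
The plan is to substitute $(\tilde{\rho},\tilde{v})$ directly into \eqref{continuity_eqn}-\eqref{incompressibility_eqn} and use the chain rule, as in Lemma~\ref{time_reparam_lem}. The natural reversal map $\tau(t) := t_f - t$ sends $[t_i,t_f]$ to $[t_f - t_f,\, t_f - t_i] = [0,\, t_f - t_i]$, which is not $[t_i,t_f]$ unless $t_i = 0$. I will therefore use the reflection about the midpoint, $\tau(t) := t_i + t_f - t$, which maps $[t_i,t_f]$ onto itself with $\tau(t_i) = t_f$ and $\tau(t_f) = t_i$. Set $\tilde{\rho}_t := \rho_{\tau(t)}$ and $\tilde{v}_t := -v_{\tau(t)}$. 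When $t_i = 0$ this is exactly the formula in the statement. In general, the statement's formula works up to a time shift: by Lemma~\ref{horizon_lem} (or a pure translation, which is Lemma~\ref{time_reparam_lem} with $\dot\sigma \equiv 1$), the shift preserves the dynamics and the endpoint values.

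First, the endpoint conditions are immediate: $\tilde{\rho}_{t_i} = \rho_{t_f} = \rho_b$ and $\tilde{\rho}_{t_f} = \rho_{t_i} = \rho_a$.

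Second, incompressibility holds pointwise in time, since $\nabla\cdot\tilde{v}_t = -\nabla\cdot v_{\tau(t)} = 0$. The boundary conditions defining $\V$ are also preserved, because $\V$ is a vector space and so $-v_{\tau(t)} \in \V$.

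Third, the continuity equation follows from the chain rule with $\dot\tau = -1$:
\[
\partial_t \tilde{\rho}_t = -(\partial_s \rho_s)\big|_{s=\tau(t)} = \nabla\cdot(\rho_{\tau(t)} v_{\tau(t)}) = -\nabla\cdot(\tilde{\rho}_t \tilde{v}_t).
\]

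The only step requiring care is the weak formulation, if solutions are understood only weakly. There, for a test function $\varphi(t,x)$, I would set $\psi(s,x) := \varphi(\tau(s),x)$ and change variables $s = \tau(t)$ in the space-time integral. The sign flip in $\partial_t\varphi$ then cancels the sign flip in the velocity, and the boundary terms at $t_i$ and $t_f$ swap, matching the swapped initial and final data. I expect no genuine obstacle beyond this bookkeeping.

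Note that this is not a direct corollary of Lemma~\ref{time_reparam_lem}, since that lemma requires $\sigma$ to be nondecreasing. That is why the chain-rule computation above must be done separately, although it is otherwise identical in form to the one in Lemma~\ref{time_reparam_lem}.
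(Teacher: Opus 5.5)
Your proof is correct and matches the paper's, which says only that the result follows by direct substitution into \eqref{continuity_eqn}--\eqref{incompressibility_eqn}; your chain-rule computation with $\dot\tau = -1$ is exactly that substitution. You are also right that the reflection should be $t \mapsto t_i + t_f - t$: as literally written, $\tilde{\rho}_{t_i} = \rho_{t_f - t_i}$ equals $\rho_b$ only when $t_i = 0$, a point the paper's one-line proof passes over.
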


\begin{proof}
	Results from direct substitution into \eqref{continuity_eqn}-\eqref{incompressibility_eqn}.
\end{proof}

We also define the following concatenation operation which will be useful in several of our arguments.

\begin{defn}[Concatenation]
	Given pairs $(\rho,v)$ and $(\rho',v')$ satisfying the dynamics \eqref{continuity_eqn}-\eqref{incompressibility_eqn} on the intervals $[t_i,t_f]$ and $[t_i',t_f']$ with $\rho_{t_f} = \rho'_{t_i'}$, we define the concatenation operation
	\begin{equation}
		\text{concat}((\rho,v),(\rho',v'))_t ~:=~
		\begin{cases}
			(\rho_t,v_t) &\text{if} \quad  t \in [t_i,t_f], \\
			(\rho'_{t - t_f + t_i'},v'_{t - t_f + t_i'}) &\text{if} \quad t \in (t_f, t_f + t_f' - t_i']
		\end{cases}
	\end{equation}
	It is immediate to check that $\text{concat}((\rho,v),(\rho',v'))$ satisfies the dynamics \eqref{continuity_eqn}-\eqref{incompressibility_eqn} on the interval $[t_i,t_f + t_f' - t_i']$.
\end{defn}

These properties are all used in the proof of the following.

\begin{prop} \label{equiv_rel_prop}
	Reachability is an equivalence relation on the state space $\P$. That is,
	\begin{enumerate}
		\item Every state is reachable from itself,
		\item $\rho_a$ is reachable from $\rho_b$ iff $\rho_b$ is reachable from $\rho_a$,
		\item If $\rho_b$ is reachable from $\rho_a$ and $\rho_c$ is reachable from $\rho_b$, then $\rho_c$ is reachable from $\rho_a$.
	\end{enumerate}
\end{prop}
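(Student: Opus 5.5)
We verify the three axioms in turn, using the time symmetries (Lemmas~\ref{time_reparam_lem}, \ref{horizon_lem} and \ref{time_rev_lem}) and the concatenation operation. By Lemma~\ref{horizon_lem}, reachability does not depend on the finite horizon chosen. So we may fix any convenient interval, say $[0,1]$, in each step.

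\emph{Reflexivity.} Given $\rho_a \in \P$, take the constant trajectory $\rho_t \equiv \rho_a$ and the zero input $v_t \equiv 0$ on $[0,1]$. Since $\V$ is a vector space, $0 \in \V$. Then $\nabla\cdot v_t = 0$, and both sides of \eqref{continuity_eqn} vanish, so $(\rho,v)$ satisfies the dynamics with $\rho_0 = \rho_1 = \rho_a$. The only thing to check is that the zero field meets whatever boundary conditions define $\V$. Both the no-flux and no-slip conditions are trivially satisfied by $v\equiv 0$, and in any case $0\in\V$ by the vector-space assumption.

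\emph{Symmetry.} Suppose $\rho_b$ is reachable from $\rho_a$ via $(\rho,v)$ on $[t_i,t_f]$. Lemma~\ref{time_rev_lem} gives the time-reversed pair $(\tilde\rho,\tilde v)$, which satisfies the dynamics with $\tilde\rho$ starting at $\rho_b$ and ending at $\rho_a$. We should first translate so that the interval starts at $0$, or equivalently reverse using $t \mapsto t_i+t_f-t$; Lemma~\ref{horizon_lem} absorbs this detail. Note that $\tilde v_t = -v_{t_f-t}$ lies in $\V$ because $\V$ is closed under negation. This proves one direction, and the converse follows by exchanging the roles of $\rho_a$ and $\rho_b$.

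\emph{Transitivity.} Let $(\rho,v)$ on $[t_i,t_f]$ take $\rho_a$ to $\rho_b$, and let $(\rho',v')$ on $[t_i',t_f']$ take $\rho_b$ to $\rho_c$. The endpoint condition $\rho_{t_f}=\rho_b=\rho'_{t_i'}$ holds, so the concatenation is defined. By the remark following its definition, it satisfies the dynamics on $[t_i, t_f+t_f'-t_i']$, starting at $\rho_a$ and ending at $\rho_c$.

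The main obstacle is justifying that the concatenation is a genuine solution across the junction time $t_f$. There the input may be discontinuous in time, so the continuity equation can only hold in a weak (distributional-in-time) sense, which Definition~\ref{reachability_def} explicitly allows. If more time regularity were demanded, I would first use Lemma~\ref{time_reparam_lem} with a smooth nondecreasing $\sigma$ whose derivative vanishes at the endpoints. This makes both inputs vanish near the junction, so the concatenated pair is $C^1$ in time there.
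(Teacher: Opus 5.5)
Your proof is correct and follows the paper's argument exactly: the zero field (admissible because $\V$ is a vector space) for reflexivity, the time-reversal lemma for symmetry, and concatenation for transitivity. You also correct the reversal index to $t \mapsto t_i + t_f - t$, which is needed when $t_i \neq 0$, and you check that the dynamics hold in the weak sense at the junction; this goes a little beyond the paper's one-line proof but uses the same approach.
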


\begin{proof}
	(1) Take $v \equiv 0$ (this is allowed because $\V$ is assumed to be a vector space). (2) Direct application of time-reversibility. (3) Direct application of concatenation.
\end{proof}

Recall that the equivalence relation partitions the state space $\P$ into equivalence classes of mutually reachable states. In light of this fact, we will use the notation $\Reach_{\rho_a}$ to denote the equivalence class of states which are reachable from $\rho_a$, and $\rho_a \sim \rho_b$ to mean that $\rho_a$ and $\rho_b$ are in the same equivalence class (i.e. are reachable from each other). When the initial density is fixed as in Problem \ref{mixing_prob}, we abbreviate the corresponding reachable set as $\Reach := \Reach_{\rho_0}$. Unlike in the classical optimal transport problem, the introduction of the incompressibility constraint \eqref{incompressibility_eqn} means that not all states are mutually reachable. The following problem is thus foundational.

\begin{prob}[Reachability]
	Given two states $\rho_a, \rho_b \in \P$, determine if $\rho_a \sim \rho_b$.
\end{prob}

Finding a general test to determine whether $\rho_a \sim \rho_b$ appears to be a genuinely nontrivial problem\footnote{This problem was posed to the attendees at the 2025 Optimal Transport Summer School at UC Santa Barbara. Thank you to all who participated in the resulting discussion, and Jethro Warnett in particular.}. That said, we do have some simple necessary conditions. For example, any conserved quantity gives rise to a necessary condition for reachability. The most fundamental conserved quantity for the system \eqref{continuity_eqn}-\eqref{incompressibility_eqn} is the total mass of the density $\int_\Omega \rho$, but we have some more discerning conserved quantities as well.

\begin{prop} \label{signature_prop}
	$\rho_\# \leb$ is a conserved quantity for the system \eqref{continuity_eqn}-\eqref{incompressibility_eqn}, where $\#$ denotes the (measure) pushforward, and $\leb$ the Lebesgue measure on $\Omega$.
\end{prop}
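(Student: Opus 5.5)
We will show that the distribution function of $\rho_t$ does not change in time, which characterizes the pushforward measure $(\rho_t)_\# \leb$ on $\R$. The most direct route is to test against an arbitrary smooth function $\phi : \R \to \R$ (bounded, with bounded derivative). By definition of pushforward,
\begin{equation*}
	\int_\R \phi \, d\big((\rho_t)_\# \leb\big) ~=~ \int_\Omega \phi(\rho_t(x)) \, dx ,
\end{equation*}
so it suffices to show that $\frac{d}{dt} \int_\Omega \phi(\rho_t) \, dx = 0$ for every such $\phi$. Since a finite Borel measure on $\R$ is determined by its integrals against such test functions, this gives $(\rho_t)_\# \leb = (\rho_{0})_\# \leb$ for all $t$.

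The key computation is as follows. Differentiating under the integral and using the non-conservative form $\partial_t \rho_t = -\nabla \rho_t \cdot v_t$, which is valid because of the incompressibility constraint \eqref{incompressibility_eqn}, we get
\begin{equation*}
	\frac{d}{dt} \int_\Omega \phi(\rho_t) \, dx ~=~ -\int_\Omega \phi'(\rho_t) \nabla \rho_t \cdot v_t \, dx ~=~ -\int_\Omega \nabla\big(\phi(\rho_t)\big) \cdot v_t \, dx .
\end{equation*}
Integrating by parts turns this into
\begin{equation*}
	\int_\Omega \phi(\rho_t) \, \nabla \cdot v_t \, dx ~-~ \int_{\partial\Omega} \phi(\rho_t) \, v_t \cdot \hat n \, dS .
\end{equation*}
The volume term vanishes by \eqref{incompressibility_eqn}. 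The boundary term vanishes under the no-flux or no-slip conditions built into $\V$, and on a manifold without boundary, such as the periodic case, there is no boundary term at all. Hence the derivative is zero.

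An alternative, Lagrangian viewpoint supports the same conclusion. If $v$ is regular enough to generate a flow $X_t$, then $\rho_t = \rho_0 \circ X_t^{-1}$, and $X_t$ preserves $\leb$ because $v$ is divergence-free (footnote \ref{incompressibility_footnote}). Functoriality of pushforward then gives
\begin{equation*}
	(\rho_t)_\# \leb ~=~ (\rho_0)_\# (X_t^{-1})_\# \leb ~=~ (\rho_0)_\# \leb .
\end{equation*}
This version needs only that $X_t$ is measure preserving, not that $\rho$ is smooth, so it also covers indicator-function densities.

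The main obstacle is regularity. The chain-rule step requires $\rho$ to be differentiable, while typical mixing densities are discontinuous. Consistent with the paper's formal stance, we would state the argument for smooth solutions and point to the Lagrangian argument, or to the DiPerna--Lions renormalization property of transport equations, as the justification in weaker settings. Exactly this renormalization property is what the test-function computation uses. A second, minor point is the boundary condition assumed in $\V$, which we handle by requiring $v \cdot \hat n = 0$ on $\partial\Omega$.
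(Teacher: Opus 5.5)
Your proposal is correct, but your primary argument is not the paper's. The paper's proof is purely Lagrangian: a divergence-free $v$ generates a volume-preserving flow, that flow carries the level sets of $\rho$ along, and hence $\big((\rho_t)_\#\leb\big)(B) = \leb(\rho_t^{-1}(B))$ is constant for every Borel $B$. That is your ``alternative'' argument, which you phrase more cleanly via functoriality of the pushforward, $(\rho_0 \circ X_t^{-1})_\#\leb = (\rho_0)_\#(X_t^{-1})_\#\leb$.

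Your main route is instead Eulerian and dual. You show $\frac{d}{dt}\int_\Omega \phi(\rho_t)\,dx = 0$ for every test function $\phi$, using the chain rule and one integration by parts, and then recover the measure from these integrals. The paper uses the same duality $\int_\Omega f(\rho)\,d\leb = \int_\R f\,d(\rho_\#\leb)$, but only after the proposition and in the opposite direction, to deduce conservation of $\int_\Omega f(\rho)$ and of the $L^p$ norms.

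What the Eulerian computation buys is that it works directly with the PDE and never constructs a flow. It also shows exactly where each hypothesis enters: incompressibility kills the volume term and the no-flux condition kills the boundary term. The paper uses the boundary condition only implicitly, by taking for granted that the flow maps $\Omega$ onto itself.

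Its cost is that it needs $\rho$ to be differentiable. That fails for the indicator-function densities of the classical mixing problem unless you invoke renormalization. The Lagrangian argument needs no regularity of $\rho$ beyond measurability, only the existence of a measure-preserving flow. In the weak (DiPerna--Lions) setting these two requirements on $v$ are essentially equivalent, so in rough settings the two routes rest on the same analytic input.
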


\begin{proof}
	As noted in Footnote~\ref{incompressibility_footnote}, the incompressibility constraint $\nabla \cdot v = 0$ implies that the flow generated by $v$ is volume-preserving. The level sets of $\rho$ are transported by this flow, so their volumes are preserved in time. The quantity $(\rho_\# \leb)(B) = \leb(\rho^{-1}(B)) = \vol(\rho^{-1}(B))$ is precisely the volume of the union of level sets $\{x \in \Omega ~:~ \rho(x) \in B\}$, so it too is invariant. Thus $\rho_\# \leb$ is conserved.
\end{proof}

We refer to the measure $\rho_\# \leb$ as the \emph{signature} of the density $\rho$.

\begin{cor}
	A necessary condition for $\rho_a \sim \rho_b$ is that they have the same signature: $(\rho_a)_\# \leb = (\rho_b)_\# \leb$.
\end{cor}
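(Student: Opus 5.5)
The corollary should follow almost immediately from Proposition~\ref{signature_prop} combined with Definition~\ref{reachability_def}. The plan is to argue by contrapositive, or equivalently directly. Suppose $\rho_a \sim \rho_b$. By definition, there is a trajectory $\rho:[t_i,t_f]\to\P$ and an input $v:[t_i,t_f]\to\V$ satisfying \eqref{continuity_eqn}-\eqref{incompressibility_eqn}, with $\rho_{t_i}=\rho_a$ and $\rho_{t_f}=\rho_b$. By Lemma~\ref{horizon_lem} we may take any convenient horizon, though this is not needed.

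Next I would invoke Proposition~\ref{signature_prop}. The map $t\mapsto(\rho_t)_\#\leb$ is constant along solutions of the dynamics, so
\[
(\rho_a)_\#\leb=(\rho_{t_i})_\#\leb=(\rho_{t_f})_\#\leb=(\rho_b)_\#\leb .
\]
Hence equal signatures is a necessary condition for $\rho_a\sim\rho_b$. Symmetry of $\sim$ (Proposition~\ref{equiv_rel_prop}) shows that it does not matter which state we start from.

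To make the conservation step less formal, I would test the signature against a bounded continuous function $\phi$. Its pairing is $\int_\Omega \phi(\rho_t)\,dx$. Differentiating in time and using $\partial_t\rho=-\nabla\rho\cdot v$ gives
\[
\frac{d}{dt}\int_\Omega \phi(\rho_t)\,dx=-\int_\Omega \nabla(\Phi(\rho_t))\cdot v_t\,dx ,
\]
where $\Phi'=\phi$ is chosen so that $\nabla(\Phi(\rho_t))=\phi(\rho_t)\nabla\rho_t$. Integrating by parts, the right-hand side vanishes by $\nabla\cdot v=0$ together with the no-flux or no-slip boundary condition encoded in $\V$. Since $\phi$ is arbitrary, the measures agree.

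The only real obstacle is the regularity: justifying the chain rule and integration by parts when $\rho$ and $v$ are weak. Consistent with the paper's formal stance, I would assume enough smoothness or use renormalized-solution (DiPerna--Lions) theory, and not pursue this further.
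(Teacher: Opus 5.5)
Your proof is correct and is the paper's own argument: the corollary follows by applying the conservation of $\rho_\#\leb$ from Proposition~\ref{signature_prop} along a trajectory joining $\rho_a$ to $\rho_b$. One small slip, in your optional Eulerian re-derivation of that conservation law: with $\Phi'=\phi$, the right-hand side $-\int_\Omega \nabla(\Phi(\rho_t))\cdot v_t\,dx$ equals $\frac{d}{dt}\int_\Omega \Phi(\rho_t)\,dx$, not $\frac{d}{dt}\int_\Omega \phi(\rho_t)\,dx$, so either state the conservation for $\Phi$ (such functions still separate finite measures) or take $\phi\in C^1$ and use $\nabla(\phi(\rho_t))=\phi'(\rho_t)\nabla\rho_t$ directly; this does not affect the corollary itself.
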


\begin{proof}
	Follows directly since $\rho_\# \leb$ is conserved.
\end{proof}

The intuition for the above condition is as follows. The signature $\rho_\# \leb$ is the pushforward of Lebesgue measure by the scalar function $\rho$ -- it is a measure on $\R$ which describes the relative volumes of the domain $\Omega$ which belong to level sets $\rho(\cdot) = r$. Under incompressibility, we cannot change the values that $\rho$ takes on, and thus we cannot change the levels themselves or the relative volumes of the level sets -- only move them around in the state space.

It is also worth pointing out that there is a dual characterization of the pushforward in terms of integration against test functions. That is,
\begin{equation}
    \int_\Omega f(\rho) \, d\leb ~=~ \int_{\R} f \, d (\rho_\# \leb)
\end{equation}
for all measurable $f: \R \to \R$. Thus, $\int_\Omega f(\rho)$ is also conserved for all measurable $f$. For example, all $L^p$ norms of $\rho$ are conserved.

\begin{rmk}
    The conservation of the signature has an immediate and important consequence for the mixing problem: the ``perfectly-mixed'' (uniform) target density is generally unreachable. This is why we require a ``soft'' final state penalty in place of a ``hard'' final state constraint in Problem \ref{mixing_prob}. Furthermore, since all $L^p$ norms are conserved, the distance $\| \rho_t - \rho_* \|_{L^p}$ is pinned to its initial value along any admissible trajectory. This is why mixedness must be measured in a weak sense (e.g. Wasserstein or negative-order Sobolev) rather than in $L^p$.
\end{rmk}

We also note that while the above condition is necessary for reachability, it is not in general sufficient. Sufficiency appears to depend delicately on the boundary conditions, on the regularity of allowed flow fields, and on the geometry of the level sets. For example, as we discuss in Section \ref{lagrangian_lift_sec}, if $v$ is required to be $C^k$, $k \geq 1$, then $\rho_0$ and $\rho_t$ can be related by a $C^k$ diffeomorphism. Thus, if $v$ is assumed to be sufficiently smooth, and if $\rho_a$ is unimodal whereas $\rho_b$ is bimodal, for example, then the two states cannot be mutually reachable.

We also point out that we treat only the case of finite-time, exact reachability here. It is entirely possible that two densities $\rho_a$, $\rho_b$ may not be mutually reachable in the sense of Definition \ref{reachability_def}, but that there may exist $(\rho,v)$ so that $\rho_t$ converges to $\rho_b$ starting from $\rho_a$ or vice versa. This asymptotic notion of reachability is a natural one to investigate given the weak-convergence picture of the above remark, but its treatment is left to future work.

\subsection{Induced Metrics and Problem Decomposition} \label{metrics_decomp_sec}

We show that Problem \ref{mixing_prob} can be broken down into three parts: 1) selection of a final density $\rho_f$, 2) selection of a path from $\rho_0$ to $\rho_f$, and 3) selection of a velocity field $v$ which generates that path. This is achieved by constructing a metric $m$ on the set of reachable states $\Reach \subset \P$ which is induced by the control effort. 
This approach is modeled on the dynamic formulation of optimal transport due to \citet{Benamou2000}.

While having a characterization of reachable states would be nice, practically speaking, this buys us nothing unless we are actually able to construct curves which connect them. We are thus brought to the following problem.

\begin{prob}[State-Transfer]
	Given initial and final states $\rho_0 \sim \rho_1$, construct a trajectory-input pair $(\rho,v)$ with endpoints $\rho_0$ and $\rho_1$ satisfying \eqref{continuity_eqn}-\eqref{incompressibility_eqn} on $[0,1]$.
\end{prob}

Note that by Lemma \ref{horizon_lem}, reachability is independent of the choice of time horizon, so we may fix it to $[0,1]$ without loss of generality. In general, the above problem seems to be a similarly challenging problem to that of reachability itself. One idea is that perhaps by imposing more structure on the problem, it might actually become easier to solve. For example, perhaps it is easier to find optimal trajectories than it is to find arbitrary ones.

\begin{prob}[Optimal State-Transfer]
	Given initial and final states $\rho_0 \sim \rho_1$ and a norm $\| \cdot \| : \V \to \R$, solve
	\begin{equation} \label{transfer_prob}
		\inf_{\rho, v} \, \int_0^1 \| v_t \|^2 \, dt \qquad \text{s.t.} \qquad (\mathrm{C1})~ \begin{cases}
			\partial_t \rho = - \nabla \cdot (\rho v) \\
			\nabla \cdot v = 0 \\
			\rho_0 ~ \text{fixed} \\
			\rho_1 ~ \text{fixed}
		\end{cases}
	\end{equation}
	We will refer to the value of this problem as the \emph{transfer cost} from $\rho_0$ to $\rho_1$ and denote it as $\T(\rho_0, \rho_1)$.
\end{prob}

We will mainly be interested in the case where the norm $\| \cdot \|$ arises from an inner product $\| v_t \|^2 = \lb v_t , v_t \rb$, as this will allow us to put a Riemannian structure on the reachable set $\Reach \subset \P$, as discussed in Section \ref{riemannian_structure_sec}. For now, it is enough to show that the above problem induces a metric 
on the reachable set. More precisely, we first define
\begin{equation} \label{metric_eqn}
	m(\rho_0,\rho_1) ~:=~ \inf_{\rho,v} \, \int_0^1 \| v_t \| \, dt \qquad \text{s.t.} \qquad (\mathrm{C1}) ,
\end{equation}
where $(\mathrm{C1})$ denotes the constraint set in \eqref{transfer_prob}. We show that $m$ forms a %
metric on the reachable set $\Reach$. We then show that the square root of the transfer cost $\T^{1/2}$ coincides with $m$. These properties follow directly from the symmetry properties of the system \eqref{continuity_eqn}-\eqref{incompressibility_eqn} and of the norm $\| \cdot \|$. Throughout, we will assume that the infimum in \eqref{metric_eqn} is attained, consistent with the formal nature of our results.

\begin{prop} \label{metric_prop}
	The quantity $m$ defined in \eqref{metric_eqn} forms a metric on the equivalence class of reachable states $\Reach$. That is,
	\begin{enumerate}
		\item $m(\rho_a, \rho_b) \geq 0$, with equality iff $\rho_a=\rho_b$,
		\item $m(\rho_a,\rho_b) = m(\rho_b,\rho_a)$,
		\item $m(\rho_a,\rho_b) + m(\rho_b,\rho_c) \geq m(\rho_a,\rho_c)$.
	\end{enumerate}
\end{prop}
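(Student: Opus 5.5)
The plan is to check the three metric axioms one at a time, using only the symmetries from Section~\ref{reachability_sec}, the norm axioms for $\|\cdot\|$, and the standing assumption that the infimum in \eqref{metric_eqn} is attained. First, $m(\rho_a,\rho_b)$ is finite on $\Reach$. Because $\rho_a\sim\rho_b$, some admissible pair connects them on some interval, and Lemma~\ref{horizon_lem} moves it to $[0,1]$. (We tacitly assume $\int_0^1\|v_t\|\,dt<\infty$ for admissible pairs.)

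\textbf{Positivity and definiteness.} Nonnegativity holds because $\|\cdot\|\ge 0$. If $\rho_a=\rho_b$, the pair $\rho_t\equiv\rho_a$, $v\equiv 0$ is admissible, since $\V$ is a vector space; it has cost $0$, so $m=0$. Conversely, suppose $m(\rho_a,\rho_b)=0$. Since the infimum is attained, there is a minimizer with $\int_0^1\|v_t\|\,dt=0$. Then $\|v_t\|=0$ for a.e.\ $t$, hence $v_t=0$ a.e.\ by definiteness of the norm. The continuity equation then gives $\partial_t\rho=0$ in the (weak) sense of Definition~\ref{reachability_def}, so $\rho_b=\rho_1=\rho_0=\rho_a$. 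This is the one step with real content. Without attainment, a minimizing sequence with $\int\|v^k\|\to 0$ would only force $\rho^k_1\to\rho_a$ in some weak topology, and one would need a quantitative estimate to control this. We sidestep it with the attainment assumption, as the paper does, together with the formal identification that $v=0$ yields a constant trajectory.

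\textbf{Symmetry.} Take an admissible $(\rho,v)$ from $\rho_a$ to $\rho_b$ on $[0,1]$. Lemma~\ref{time_rev_lem} gives the admissible pair $\tilde\rho_t=\rho_{1-t}$, $\tilde v_t=-v_{1-t}$ from $\rho_b$ to $\rho_a$. Since $\|-v\|=\|v\|$ and $t\mapsto 1-t$ preserves Lebesgue measure, it has the same cost. Hence $m(\rho_b,\rho_a)\le m(\rho_a,\rho_b)$, and exchanging the roles gives equality.

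\textbf{Triangle inequality.} Take admissible pairs from $\rho_a$ to $\rho_b$ and from $\rho_b$ to $\rho_c$ on $[0,1]$, with costs $c_1$ and $c_2$. Concatenation yields an admissible pair on $[0,2]$ from $\rho_a$ to $\rho_c$ of cost $c_1+c_2$. Reparameterize it with $\sigma(t)=2t$ via Lemma~\ref{time_reparam_lem}, so that $\tilde v_t=2v_{2t}$. By homogeneity of the norm and the substitution $s=2t$,
\[
\int_0^1\|\tilde v_t\|\,dt=\int_0^1 2\|v_{2t}\|\,dt=\int_0^2\|v_s\|\,ds=c_1+c_2 .
\]
The key point is that $\int\|v\|$ is invariant under reparameterization, and this is precisely why $m$ is defined with $\int\|v\|$ rather than $\int\|v\|^2$. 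Taking infima over both pairs gives $m(\rho_a,\rho_c)\le m(\rho_a,\rho_b)+m(\rho_b,\rho_c)$.
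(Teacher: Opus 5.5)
Your proposal is correct and follows essentially the same route as the paper: attainment plus definiteness of the norm for (1), time reversal with $-v_{1-t}$ and homogeneity for (2), and concatenation plus reparameterization invariance of $\int\|v_t\|\,dt$ for (3). The only difference is cosmetic: you check invariance only for the linear rescaling $\sigma(t)=2t$, which is all the triangle inequality needs, whereas the paper proves it for a general monotone $\sigma$.
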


\begin{proof}
	(1) Since the integrand $\| v_t \|$ is nonnegative, $m(\rho_a,\rho_b) \geq 0$. If $\rho_a = \rho_b$, then taking $v \equiv 0$ (so that $\rho \equiv \rho_a$ satisfies $(\mathrm{C1})$) gives $m(\rho_a,\rho_b) = 0$. Conversely, suppose $m(\rho_a,\rho_b) = 0$. By assumption, the infimum is attained, so there is an admissible pair $(\rho,v)$ with $l(v) = \int_0^1 \| v_t \| \, dt = 0$. This forces $\| v_t \| = 0$ for a.e.\ $t$, hence $v_t = 0$ for a.e.\ $t$, so that $\partial_t \rho = -\nabla\cdot(\rho v) = 0$ and $\rho$ is constant in time. Therefore $\rho_a = \rho_0 = \rho_1 = \rho_b$.
	
	(2) First, we define the quantity
	\begin{equation} \label{lv_orig_eq}
		l(v) ~:=~ \int_0^1 \| v_t \| \, dt ,
	\end{equation}
	so that $m = \inf \{ l(v) ~ \text{s.t.} ~ (\mathrm{C1}) \}$. By time-reversibility (Lemma \ref{time_rev_lem}) and homogeneity of $\| \cdot \|$, we see that if $\rho_b$ is reachable from $\rho_a$ with cost $l(v)$ using velocity field $v_t$, then $\rho_a$ is reachable from $\rho_b$ with cost $l(v)$ using velocity field $-v_{1-t}$. The result follows by passing to the infima.
	
	(3) First, we establish that the quantity $l(v)$ defined above is invariant under time-reparameterization. That is, let $\rho , v , \sigma, \tilde{\rho}, \tilde{v}$ be defined as in Lemma \ref{time_reparam_lem}. Then it holds by change-of-variables that
	\begin{multline}
		l(\tilde{v}) ~:=~ \int_{t_i'}^{t_f'} \| \tilde{v}_t \| \, dt ~:=~ \int_{t_i'}^{t_f'} \| \dot{\sigma}(t) \, v_{\sigma(t)} \| \, dt ~=~ \\
		\int_{t_i'}^{t_f'} \dot{\sigma}(t) \, \| v_{\sigma(t)} \| \, dt ~=~ \int_{t_i}^{t_f} \| v_\tau \| \, d\tau ~=:~ l(v) ,
	\end{multline}
	where the third equality uses $\dot{\sigma} \geq 0$ together with the homogeneity of $\| \cdot \|$, and the fourth is the change of variables $\tau = \sigma(t)$. This allows us to extend the definition of $l(v)$ to arbitrary time intervals. It is then immediate that $l$ is additive under concatenation, i.e., that $(\rho'',v'') = \text{concat}((\rho,v),(\rho',v'))$ implies $l(v'') = l(v) + l(v')$. 
	
	Now, fix $\varepsilon > 0$ and choose a velocity field $v$ taking $\rho_a$ to $\rho_b$ with $l(v) \leq m(\rho_a,\rho_b) + \varepsilon$ and a velocity field $v'$ taking $\rho_b$ to $\rho_c$ with $l(v') \leq m(\rho_b,\rho_c) + \varepsilon$. Their concatenation $v''$ takes $\rho_a$ to $\rho_c$. After reparameterizing to $[0,1]$ (which leaves $l$ unchanged by the established invariance), it is admissible in \eqref{metric_eqn}. Therefore
	\begin{equation}
		m(\rho_a,\rho_c) ~\leq~ l(v'') ~=~ l(v) + l(v') ~\leq~ m(\rho_a,\rho_b) + m(\rho_b,\rho_c) + 2\varepsilon .
	\end{equation}
	Letting $\varepsilon \to 0$ yields the result.
\end{proof}

Now, we show that the square root of the transfer cost coincides with $m$. Actually, we show a bit more.

\begin{lem} \label{p_action_lem}
	For all $T \in (0,\infty)$, $p \in [1,\infty)$, it holds that
	\begin{equation} \label{p_action_eq}
		m^p(\rho_a,\rho_b) ~=~ \inf_{\rho,v} ~ T^{p-1} \int_0^T \| v_t \|^p \, dt \qquad \text{s.t.} \qquad \begin{cases}
			\partial_t \rho = - \nabla \cdot (\rho v) \\
			\nabla \cdot v = 0 \\
			\rho_0 = \rho_a \\
			\rho_T = \rho_b
		\end{cases}
	\end{equation}
\end{lem}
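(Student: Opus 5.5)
We prove the two inequalities separately, using Jensen/H\"older for the lower bound and a constant-speed reparameterization (Lemma \ref{time_reparam_lem}) for the upper bound. Denote by $A_p(v) := T^{p-1}\int_0^T \|v_t\|^p\,dt$ the action in \eqref{p_action_eq}. Throughout we use the extended length $l(v) = \int_0^T \|v_t\|\,dt$, which by the proof of Proposition \ref{metric_prop} is invariant under time-reparameterization. Hence $m(\rho_a,\rho_b)$ equals the infimum of $l(v)$ over admissible pairs on $[0,T]$ with $\rho_0=\rho_a$, $\rho_T=\rho_b$: any admissible pair on $[0,1]$ can be linearly reparameterized to $[0,T]$ as in Lemma \ref{horizon_lem}, and conversely, without changing $l$.

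\emph{Lower bound.} For any admissible $(\rho,v)$ on $[0,T]$, H\"older's (or Jensen's) inequality with exponents $p$ and $p/(p-1)$ gives
\begin{equation*}
l(v) ~=~ \int_0^T \|v_t\|\cdot 1\,dt ~\leq~ \Big(\int_0^T \|v_t\|^p\,dt\Big)^{1/p} T^{(p-1)/p},
\end{equation*}
so $l(v)^p \le A_p(v)$. Since $m(\rho_a,\rho_b)\le l(v)$, taking infima yields $m^p(\rho_a,\rho_b) \le \inf A_p$. When $p=1$ this step is an identity.

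\emph{Upper bound.} Fix $\varepsilon>0$ and an admissible $(\rho,v)$ on $[0,T]$ with $l(v)\le m(\rho_a,\rho_b)+\varepsilon$; by the standing assumption we may even take $\varepsilon=0$. The idea is to reparameterize to constant speed, since H\"older is an equality exactly when $\|\tilde v_t\|$ is constant. Let $L=l(v)$ and $s(t)=\int_0^t\|v_\tau\|\,d\tau$, which is nondecreasing from $0$ to $L$. We want $\sigma:[0,T]\to[0,T]$ with $s(\sigma(t)) = Lt/T$, so that $\tilde v_t = \dot\sigma(t)v_{\sigma(t)}$ has $\|\tilde v_t\| = L/T$ for a.e.\ $t$. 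Then
\begin{equation*}
A_p(\tilde v) ~=~ T^{p-1}\cdot T\cdot (L/T)^p ~=~ L^p,
\end{equation*}
so $\inf A_p \le (m+\varepsilon)^p$, and letting $\varepsilon\to0$ gives the result. If $L=0$, then $\rho_a=\rho_b$ by Proposition \ref{metric_prop}, and $v\equiv0$ gives $A_p=0$ directly.

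\emph{Main obstacle.} The difficulty is constructing $\sigma$. Lemma \ref{time_reparam_lem} requires $\sigma$ to be $C^1$ and monotone, but $s$ may have flat intervals (where $v=0$) and need not be $C^1$, so $\sigma = s^{-1}(Lt/T)$ may fail to exist or be smooth. My first approach is to remove the flat pieces: on intervals where $v_t=0$ the state $\rho$ is constant, so they can be excised and the remaining pieces concatenated without changing the endpoints or $l$. If $s$ is strictly increasing but not smooth, I would instead regularize, e.g.\ use $s_\delta(t) = s(t)+\delta t$ after mollifying $\|v_t\|$, invert $s_\delta$, and let $\delta\to0$. The resulting speed is then only approximately constant, but the error in $A_p$ tends to zero. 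Given the paper's formal standing, I would state these regularity steps briefly and treat the reparameterization lemma as applicable to absolutely continuous monotone $\sigma$.
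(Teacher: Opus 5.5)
Your proposal is correct and follows essentially the same route as the paper: the Jensen/H\"older bound $l(v)^p \le T^{p-1}\int_0^T \|v_t\|^p\,dt$ holds for every admissible pair, and equality is attained by the arc-length (constant-speed) reparameterization after removing rest intervals. Your handling of the regularity of $\sigma$ is, if anything, more careful than the paper's, which simply supposes $\|v_t\|$ is continuous and positive a.e.
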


\begin{proof}
	If $\rho_a = \rho_b$ then both sides vanish (taking $v \equiv 0$), so we may assume $\rho_a \neq \rho_b$ so that $l(v) > 0$. We further restrict attention to velocity fields with $\| v_t \| > 0$ for a.e.\ $t$. This is without loss of generality, since any rest interval may be removed by reparameterization without changing $l(v)$.
	
	Let $(\rho,v)$ be a pair satisfying the constraints in the right-hand side of Equation \eqref{p_action_eq}.
	
	(1) First, we show that the pair admits a reparameterization where $\| v_t \| = l(v) / T = \text{constant}$. Define
	\begin{equation}
		\sigma(t) ~:=~ \frac{T}{l(v)} \int_0^t \| v_s \| \, ds .
	\end{equation}
	Supposing $\| v_t \|$ is continuous, $\sigma$ is continuously differentiable, and, since $\| v_t \| > 0$ a.e., strictly increasing from $0$ to $T$. Therefore $\sigma$ admits an inverse $\sigma^{-1}$, and the inverse function theorem gives
	\begin{equation}
		\dot{(\sigma^{-1})}(t) ~=~ \frac{1}{\dot{\sigma}(\sigma^{-1}(t))} ~=~ \frac{l(v)}{T \, \| v_{\sigma^{-1}(t)} \|} \quad \text{a.e.}
	\end{equation}
	Define $(\tilde{\rho},\tilde{v})$ by reparameterization by $\sigma^{-1}$. Then $\tilde{v}_t := \dot{(\sigma^{-1})}(t) \, v_{\sigma^{-1}(t)}$ is a velocity field on $[0,T]$, and, using the homogeneity of $\| \cdot \|$ to factor out the positive scalar $\dot{(\sigma^{-1})}(t)$,
	\begin{multline}
		\left\| \tilde{v}_t \right\| ~=~ \left\| \dot{(\sigma^{-1})}(t) \, v_{\sigma^{-1}(t)} \right\| ~=~ \left\| \frac{l(v)}{T \, \| v_{\sigma^{-1}(t)} \|} \, v_{\sigma^{-1}(t)} \right\|  \\
		~=~ \frac{l(v)}{T} \left\| \frac{ v_{\sigma^{-1}(t)} }{\| v_{\sigma^{-1}(t)} \|} \right\|  ~=~ \frac{l(v)}{T} ~=~ \text{constant} \quad \text{a.e.},
	\end{multline}
	since the final vector has unit norm.
	
	(2) Now, we show that an optimal $v$ in the right-hand side of \eqref{p_action_eq} can be taken to be of this form. This follows from Jensen's inequality, as the convexity of $\| \cdot \|^p$, $p \in [1,\infty)$ implies that
	\begin{equation} \label{jensen_eq}
		\int_0^T \| v_t \|^p \, dt ~=~ T \int_0^T \| v_t \|^p \, \frac{dt}{T} ~\geq~ T \lp \int_0^T \| v_t \| \, \frac{dt}{T}  \rp^p ~=~ T \left( \frac{l(v)}{T} \right)^p ~=~ T^{1-p} \, l^p(v) ,
	\end{equation}
	with equality if $\| v_t \| = l(v) / T = \text{constant}$.
	
	(3) The result follows from scaling Equation \eqref{jensen_eq} by $T^{p-1}$ and passing to the infimum, recognizing that $m = \inf \{ l(v) ~ \text{s.t.} ~ (\mathrm{C1}) \}$ (see proof of Lemma \ref{metric_prop}).
\end{proof}

In particular, taking $T = 1$ and $p = 2$ above gives $m = \T^{1/2}$. In other words, the metric $m$ can be characterized as the square root of the transfer cost.

Now, observe that the transfer cost of the optimal state-transfer problem \eqref{transfer_prob} is exactly the same as the control energy portion of the cost for the mixing problem \eqref{mixing_prob_eqn} when $T = 1$ and the final state $\rho_1$ in \eqref{transfer_prob} is taken equal to $\rho_T$ in \eqref{mixing_prob_eqn}. Since the terminal cost $\alpha d^2(\rho_T,\rho_*)$ depends on the trajectory only through its endpoint $\rho_T$, we may break the minimization in \eqref{mixing_prob_eqn} into two parts: first over trajectories with fixed endpoint $\rho_f$ (necessarily reachable from $\rho_0$), and then over the endpoints $\rho_f$ themselves:
\begin{align}
	&~~\inf_{\rho, v} \int_0^T \| v_t \|^2 \, dt  +  \alpha d^2(\rho_T,\rho_*) \quad \text{s.t.} \quad \begin{cases}
		\partial_t \rho = - \nabla \cdot (\rho v) \\
		0 = \nabla \cdot v \\
		\rho_0 ~ \text{fixed}
	\end{cases}  \hspace{-7mm} \\
	~=~ &\inf_{\rho_f \in \Reach}\, \left[ \inf_{\rho, v} \, \int_0^T \| v_t \|^2 \, dt ~~ \text{s.t.} ~~ \begin{cases}
		\partial_t \rho = - \nabla \cdot (\rho v) \\
		0 = \nabla \cdot v \\
		\rho_0 ~ \text{fixed} \\
        \rho_T = \rho_f
	\end{cases}  \right]  \, + \,  \alpha d^2(\rho_f,\rho_*) \\
	~=~ &\inf_{\rho_f \in \Reach}\, \frac{1}{T} \, m^2(\rho_0,\rho_f)  \, + \, \alpha d^2(\rho_f,\rho_*) , \label{state_selection_eq}
\end{align}
where the final equality applies Lemma \ref{p_action_lem} with $p = 2$. With the above argument, we have just proven the following.
\begin{thm} \label{decomposition_thm}
	Any optimal solution to the mixing problem \eqref{mixing_prob_eqn} is given by an optimal solution to the state-transfer problem \eqref{transfer_prob} from $\rho_0$ to $\rho_f$ (linearly rescaled from the horizon $[0,1]$ to $[0,T]$), where $\rho_f$ solves the final state selection problem \eqref{state_selection_eq}.
\end{thm}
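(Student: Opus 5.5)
Given an optimal pair $(\rho^\star,v^\star)$ for \eqref{mixing_prob_eqn}, I will show directly that it arises in the stated way. The plan has three steps, all resting on the decomposition chain \eqref{state_selection_eq} and on the equality case of Lemma~\ref{p_action_lem}.

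\textbf{Step 1: the endpoint solves the selection problem.} Set $\rho_f := \rho^\star_T$; since $\rho^\star$ is an admissible trajectory from $\rho_0$, we have $\rho_f \in \Reach$. By Lemma~\ref{p_action_lem} with $p=2$,
\begin{equation*}
\int_0^T \| v^\star_t \|^2 \, dt \;\ge\; \tfrac{1}{T}\, m^2(\rho_0,\rho_f).
\end{equation*}
Hence the cost of $(\rho^\star,v^\star)$ is at least the selection objective $J(\rho_f) := \tfrac{1}{T} m^2(\rho_0,\rho_f) + \alpha d^2(\rho_f,\rho_*)$. By \eqref{state_selection_eq}, this cost equals $\inf_{\rho'_f \in \Reach} J(\rho'_f)$. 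Therefore $J(\rho_f) = \inf J$, so $\rho_f$ solves the final state selection problem.

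\textbf{Step 2: the control energy is optimal for the fixed endpoint.} In the chain above every inequality must now be an equality, so $\int_0^T \| v^\star_t\|^2\,dt = \tfrac1T m^2(\rho_0,\rho_f)$. The pair therefore attains the inner infimum over trajectories with $\rho_T = \rho_f$.

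\textbf{Step 3: rescale to the state-transfer problem.} Define $\tilde\rho_s := \rho^\star_{sT}$ and $\tilde v_s := T\, v^\star_{sT}$ on $[0,1]$, with the linear map $\sigma(s) = sT$. By Lemma~\ref{time_reparam_lem}, $(\tilde\rho,\tilde v)$ is admissible for (C1) from $\rho_0$ to $\rho_f$. A change of variables gives
\begin{equation*}
\int_0^1 \| \tilde v_s\|^2\,ds \;=\; T \int_0^T \| v^\star_t\|^2\,dt \;=\; m^2(\rho_0,\rho_f) \;=\; \T(\rho_0,\rho_f),
\end{equation*}
where the last equality is Lemma~\ref{p_action_lem} with $T=1$. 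So $(\tilde\rho,\tilde v)$ is optimal for \eqref{transfer_prob}, and $(\rho^\star,v^\star)$ is its linear rescaling, as claimed.

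\textbf{Converse.} Gluing an optimal transfer to an optimal $\rho_f$ and rescaling gives cost exactly $J(\rho_f) = \inf J$. By \eqref{state_selection_eq} this equals the mixing value, so such a pair is optimal for the mixing problem.

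The main obstacle is Step 2, namely making the equality case rigorous. It requires that infima are attained, as the paper assumes, and that the value of the nested problem in \eqref{state_selection_eq} genuinely equals the mixing value. That identity is simply the splitting of an infimum over $(\rho,v)$ according to the endpoint $\rho_T$, which is valid because the terminal cost depends only on $\rho_T$. I would state this splitting explicitly rather than rely on it implicitly.
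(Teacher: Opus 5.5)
Your proof is correct and takes the same route as the paper: split the infimum in \eqref{mixing_prob_eqn} by the endpoint $\rho_T$ (valid since the terminal cost depends only on $\rho_T$) and apply Lemma~\ref{p_action_lem} with $p=2$. Your Steps 1--3 spell out the equality-case extraction and the linear rescaling that the paper leaves as ``follows directly.'' One small correction to your closing remark: the forward direction does not need the inner infima to be attained, because an optimal mixing pair already forces every inequality in your chain to be an equality; attainment enters only as a hypothesis of your converse.
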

\begin{proof}
	Follows directly as outlined above.
\end{proof}

We have now broken the mixing problem down into two component subproblems: selection of a final state $\rho_f$ according to \eqref{state_selection_eq}, and the optimal state-transfer problem \eqref{transfer_prob} from $\rho_0$ to $\rho_f$. In principle, a suitably well-mixed final state $\rho_f$ can be found by solving \eqref{state_selection_eq} for a sufficiently large value of $\alpha$. In practice, this may require some computational ingenuity. Treatment of this point is left to future work.

\begin{rmk}
    This decomposition shows that the mixedness metric $d$ enters the solution of the mixing problem only through selection of the final state $\rho_f$. This cleanly separates the geometry of the reachable set $\Reach$ -- the subject of the remainder of this paper -- from the choice of $d$.
\end{rmk}

Now, we focus on solving the optimal state-transfer problem \eqref{transfer_prob}. We will show that this problem can be further broken down into selection of a path in the space of reachable densities $\rho: [0,1] \to \Reach \subset \P$ and selection of a velocity field $v$ which generates that path. The key step here is to use the \emph{metric derivative}.

\begin{defn}[Metric Derivative]
	Consider the space of reachable densities $\Reach \subset \P$ endowed with the metric $m$. Given an absolutely continuous curve $\rho:[0,1] \to (\Reach,m)$, we define its \emph{metric derivative}
	\begin{equation} \label{metric_deriv_def}
		|\dot{\rho}_t| ~:=~ \lim_{h\to 0} \frac{m(\rho_{t+h},\rho_t)}{|h|}  .
	\end{equation}
	The definition of absolute continuity ensures that the metric derivative is well-defined for a.e. $t \in [0,1]$.
\end{defn}

The metric derivative can be related back to the velocity field $v$ as follows.

\begin{lem} \label{metric_derivative_lemma}
	Let $\rho: [0,1] \to (\Reach,m)$ be absolutely continuous. Then its metric derivative satisfies for a.e. $t \in [0,1]$
	\begin{equation} \label{metric_derivative_lem}
		|\dot{\rho}_t| ~=~ \inf_{v_t} \| v_t \|  \quad \text{s.t.} \quad (\mathrm{C2}) ~ \begin{cases}
			\partial_t \rho_t = - \nabla \cdot (\rho_t v_t) \\
			0 = \nabla \cdot v_t
		\end{cases} 
	\end{equation}
\end{lem}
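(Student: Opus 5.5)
The plan is to prove two inequalities, $|\dot{\rho}_t| \le \inf\{\|v_t\| : (\mathrm{C2})\}$ and $|\dot{\rho}_t| \ge \|v_t^*\|$ for some admissible $v^*$, at a.e.\ $t$. In this formal spirit I will assume $\rho$ is generated by some velocity field, i.e.\ there is at least one admissible $v$ with $(\rho,v)$ satisfying the dynamics on $[0,1]$. I will also assume the relevant maps $t \mapsto \|v_t\|$ have Lebesgue points a.e.

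\emph{Upper bound.} Fix $t$ and any admissible pair $(\rho,v)$ on $[0,1]$. The restriction of $v$ to $[t,t+h]$, reparameterized to $[0,1]$ via Lemma \ref{time_reparam_lem}, is admissible for $m(\rho_t,\rho_{t+h})$. Since $l$ is invariant under reparameterization (proof of Proposition \ref{metric_prop}), this gives
\begin{equation*}
m(\rho_t,\rho_{t+h}) \le \int_t^{t+h}\|v_s\|\,ds .
\end{equation*}
Dividing by $h$ and letting $h\to0$ at a Lebesgue point yields $|\dot\rho_t|\le\|v_t\|$.

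The instantaneous constraint $(\mathrm{C2})$ only fixes $v_t$ at time $t$. So I must show that any $w$ with $\nabla\cdot w=0$ and $-\nabla\cdot(\rho_t w)=\partial_t\rho_t$ can be substituted. The key observation is linearity: $u:=w-v_t$ satisfies $\nabla\cdot u=0$ and $\nabla\cdot(\rho_t u)=0$, i.e.\ $u$ does not move $\rho_t$ to first order. Define $\tilde v_s := v_s + u_s$, where $u_s$ is a family of fields preserving $\rho_s$ (e.g.\ $u$ conjugated/transported along the flow), continuous in $s$ with $u_t=u$. Then $(\rho,\tilde v)$ still satisfies the dynamics, and the previous argument gives $|\dot\rho_t|\le\|w\|$. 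Taking the infimum over $w$ gives the upper bound.

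This extension step is the main obstacle. Formally, the stabilizer of $\rho_s$ varies along the curve, and I would handle this by transporting $u$ with the flow map of $v$: pushforward by a volume-preserving map preserves both divergence-freeness and the relation $\nabla\cdot(\rho u)=0$. Continuity of $\|\tilde v_s\|$ at $s=t$ then supplies the Lebesgue point.

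\emph{Lower bound.} Fix $h>0$ and let $v^h$ be an optimal (attained) field for $m(\rho_t,\rho_{t+h})$, reparameterized by Lemma \ref{p_action_lem} to constant speed on $[t,t+h]$ with $\|v^h_s\|=m(\rho_t,\rho_{t+h})/h$. Averaging the continuity equation in time gives
\begin{equation*}
\frac{\rho_{t+h}-\rho_t}{h} = -\nabla\cdot\Bigl(\frac1h\int_t^{t+h}\rho^h_s v^h_s\,ds\Bigr).
\end{equation*}
As $h\to0$, $\rho^h_s\to\rho_t$, and a weak limit $\bar v$ of the averages $\frac1h\int v^h_s\,ds$ exists along a subsequence whenever the ratios $m/h$ are bounded, which holds by absolute continuity at a.e.\ $t$. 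This limit satisfies $(\mathrm{C2})$, and lower semicontinuity of the norm gives $\|\bar v\|\le\liminf m(\rho_t,\rho_{t+h})/h=|\dot\rho_t|$. Hence $\inf\{\|v_t\| : (\mathrm{C2})\}\le|\dot\rho_t|$, which completes the argument. The compactness and lower semicontinuity used here are accepted formally, consistent with the paper's standing conventions.
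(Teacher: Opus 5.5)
Your proof is correct at the paper's formal level, but it takes a genuinely different route. The paper does not prove two separate inequalities. It interchanges the infimum and the time integral (assuming a measurable near-optimal selection), regards $L(\rho)=\int_0^1\inf\{\|v_t\|:(\mathrm{C2})\}\,dt$ as a length structure inducing $m$, and \emph{assumes} $L$ is lower semicontinuous under uniform convergence. Burago--Burago--Ivanov's Theorem 2.4.3 then gives $\int_0^1|\dot\rho_t|\,dt=L(\rho)$ for every absolutely continuous curve, and the paper differentiates over subintervals.

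Your argument opens that black box. The upper bound is the elementary comparison $m(\rho_t,\rho_{t+h})\le\int_t^{t+h}\|v_s\|\,ds$. Your lower bound (averaging near-optimal connecting fields, weak compactness in the Hilbert space $\V$, weak closedness of the constraints, weak lower semicontinuity of the norm) is essentially the content of the paper's lower-semicontinuity hypothesis. This buys transparency about which ingredients are automatic and which are genuinely assumed (a generating field, and stability of the continuity equation as $\rho^h_s\to\rho_t$), at the cost of length.

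Two small points on your upper bound. First, the transport-by-the-flow extension needs more regularity than necessary. You can instead replace $v$ by the minimal-norm representatives $s\mapsto\Theta_{\rho_s}v_s$, which generate the same curve and attain the infimum in $(\mathrm{C2})$; the only price is the same measurability assumption the paper makes. Second, if you keep your construction, the Lebesgue point you need is the strong one, $\frac1h\int_t^{t+h}\|v_s-v_t\|\,ds\to0$, for the fixed field $v$, together with $\|u_s-u\|\to0$. Continuity of $\|\tilde v_s\|$ at $s=t$ is not available, since it fails when $v$ is discontinuous in time. Using the strong Lebesgue point also ensures the exceptional null set does not depend on $w$.
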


\begin{proof}
	The main idea of the proof is to recognize that the metric $m$ is induced by the following length structure:
	\begin{align}
		L(\rho) ~&:=~ \inf_v \, l(v) ~~ \text{s.t.} ~~ (\mathrm{C2}) ~ \forall t \\
		~&:=~ \inf_v \int_0^1 \| v_t \| \, dt ~~ \text{s.t.} ~~ (\mathrm{C2}) ~ \forall t \\
		~& \phantom{:} =~ \int_0^1 \left[ \inf_{v_t} \| v_t \| ~~ \text{s.t.} ~~ (\mathrm{C2}) \right] \, dt . \label{inf_v_eq}
	\end{align}
	That is, that
	\begin{align}
		m(\rho_0 , \rho_1) ~&:=~ \inf_{\rho,v} \, \int_0^1 \| v_t \| \, dt ~~ \text{s.t.} ~~ (\mathrm{C1}) \\
		~& \phantom{:} =~ \inf_\rho L(\rho) ~~ \text{s.t.} ~~ \rho_0, \rho_1 ~ \text{fixed} .
	\end{align}
	The fact that $L$ is a length structure follows from the fact that $l$ is a length structure, which follows from the properties of $l$ established in the proof of Proposition \ref{metric_prop}. The equality in \eqref{inf_v_eq} amounts to interchanging the infimum and the integral. Since the constraints $(\mathrm{C2})$ are pointwise-in-time, this interchange is valid provided a measurable near-optimal selection $t \mapsto v_t$ of the inner problem exists; we assume this throughout.
	
	With the metric $m$ thus defined, we can then define the \emph{intrinsic length} of an absolutely continuous curve $\rho$ by
	\begin{equation}
		L_m(\rho) ~:=~ \int_0^1 |\dot{\rho}_t| \, dt .
	\end{equation}
	It is a foundational result in metric geometry (see, e.g., \citet[Theorem 2.4.3]{Burago2001} and surrounding discussion) that if $L$ is lower-semicontinuous with respect to uniform convergence of $\rho$, then the intrinsic length $L_m$ coincides with the length $L$ defined via the length structure. We will assume this to hold, consistent with the formal nature of our results.
    Under this assumption, we have
	\begin{equation}
		\int_0^1 |\dot{\rho}_t| \, dt  ~=~ \int_0^1 \left[ \inf_{v_t} \| v_t \| ~~ \text{s.t.} ~~ (\mathrm{C2}) \right] \, dt
	\end{equation}
	for all absolutely continuous curves $\rho$, so that by the additivity of $L$, $L_m$ under concatenation, we must have
	\begin{equation}
		|\dot{\rho}_t| ~=~ \inf_{v_t} \| v_t \|  \quad \text{s.t.} \quad (\mathrm{C2})
	\end{equation}
	for a.e. $t \in [0,1]$.
\end{proof}

Note that in equations \eqref{metric_deriv_def} and \eqref{metric_derivative_lem}, the curve $\rho$ is considered to be fixed. In particular, $(\mathrm{C2})$ is considered as a set of static constraints for $v$ at each $t$ (depending on $\rho$) rather than as a dynamic constraint. We can then break the optimal state-transfer problem \eqref{transfer_prob} over paths $\rho$ as follows:
\begin{align}
	&\inf_{\rho, v} \, \int_0^1 \| v_t \|^2 \, dt \quad \text{s.t.} \quad (\mathrm{C1}) \\
	~=~ &\inf_{\rho}  \int_0^1 \left[ \inf_{v_t} \| v_t \|^2 ~~ \text{s.t.} ~~ (\mathrm{C2}) \right] \, dt
	\quad \text{s.t.} \quad \begin{cases}
		\rho \in AC([0,1]; \Reach) \\
		\rho_0 ~ \text{fixed} \\
		\rho_1 ~ \text{fixed}
	\end{cases} \\
	~=~ &\inf_{\rho}  \int_0^1 |\dot{\rho}_t|^2 \, dt
	\quad \text{s.t.} \quad \begin{cases}
		\rho \in AC([0,1]; \Reach) \\
		\rho_0 ~ \text{fixed} \\
		\rho_1 ~ \text{fixed}
	\end{cases} ,
\end{align}
where $AC([0,1];\Reach)$ denotes the set of absolutely continuous curves from $[0,1] \to \Reach$.

Recall from the proof of Lemma \ref{p_action_lem} that optimal paths may be taken to have constant $\| v_t \|$, thus constant metric derivative $|\dot{\rho}_t|$. Since $|\dot{\rho}_t|$ is constant, the Cauchy--Schwarz inequality on $[0,1]$ holds with equality, giving
\begin{equation}
	\int_0^1 |\dot{\rho}_t|^2 \, dt ~=~ \left( \int_0^1 |\dot{\rho}_t| \, dt \right)^2 ~=~ \text{length}^2(\rho) ,
\end{equation}
where $\text{length}(\rho) = L_m(\rho)$ is the intrinsic length of the curve $\rho$.

We have therefore proven the following.

\begin{thm} \label{state_transfer_geodesic_thm}
	Any optimal solution to the state-transfer problem \eqref{transfer_prob} is given by a pair $(\rho,v)$, where $\rho$ is a minimum-length absolutely continuous path in the reachable set $\Reach \subset \P$ with endpoints $\rho_0$ and $\rho_1$ and speed $|\dot{\rho}_t| \equiv m(\rho_0,\rho_1) = \text{constant}$, and $v$ minimizes at each time $t$ the left-hand side of \eqref{metric_derivative_lem}.
\end{thm}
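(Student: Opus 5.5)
The plan is to combine the two reductions already established: the splitting of the state-transfer problem into an outer minimization over paths $\rho \in AC([0,1];\Reach)$ and an inner, pointwise-in-time minimization over $v_t$ (justified by Lemma~\ref{metric_derivative_lemma}), together with the constant-speed reparameterization and Jensen argument of Lemma~\ref{p_action_lem}. Concretely, I would take an optimal pair $(\rho,v)$ for \eqref{transfer_prob}, so that $\int_0^1 \|v_t\|^2\,dt = \T(\rho_0,\rho_1) = m^2(\rho_0,\rho_1)$ (Lemma~\ref{p_action_lem} with $T=1$, $p=2$). I would then show three things in turn: (i) $v_t$ is a pointwise minimizer of the left-hand side of \eqref{metric_derivative_lem}; (ii) $\rho$ has constant speed; and (iii) $\rho$ has minimal length $m(\rho_0,\rho_1)$ among absolutely continuous paths joining the endpoints.

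For (i), I would use $\|v_t\| \geq |\dot\rho_t|$ for a.e.\ $t$, which holds because $v_t$ is admissible for (C2) and Lemma~\ref{metric_derivative_lemma} applies. This gives $\int_0^1\|v_t\|^2\,dt \geq \int_0^1 |\dot\rho_t|^2\,dt$. Assuming the measurable-selection hypothesis used in that lemma, replacing $v_t$ by a pointwise minimizer yields an admissible pair with cost $\int_0^1|\dot\rho_t|^2\,dt$. By optimality of $(\rho,v)$, equality must then hold, so $\|v_t\| = |\dot\rho_t|$ a.e.; that is, $v_t$ attains the infimum in \eqref{metric_derivative_lem}.

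For (ii) and (iii), I would chain inequalities using Cauchy--Schwarz (Jensen with $p=2$) and the fact that the intrinsic length dominates the metric:
\[
m^2(\rho_0,\rho_1) = \int_0^1 |\dot\rho_t|^2\,dt \;\geq\; \Bigl(\int_0^1 |\dot\rho_t|\,dt\Bigr)^2 = L_m(\rho)^2 \;\geq\; m^2(\rho_0,\rho_1).
\]
The last inequality is the identity $L_m = L$ from Lemma~\ref{metric_derivative_lemma}, combined with the definition of $m$ as the infimum of $L$ over connecting paths. All inequalities are therefore equalities. Equality in Cauchy--Schwarz forces $|\dot\rho_t|$ to be constant a.e., and that constant equals $L_m(\rho)=m(\rho_0,\rho_1)$ because the horizon has length one. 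Equality in the final step says that $\rho$ is a minimum-length path.

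I expect the main obstacle to be step (i): passing from "$v$ is jointly optimal" to "$v_t$ is pointwise optimal" requires a measurable near-optimal selection and some care with null sets. I would handle it with the same standing assumption as in Lemma~\ref{metric_derivative_lemma}, arguing by contradiction: if $\|v_t\| > |\dot\rho_t|$ on a set of positive measure, substituting a selection there strictly lowers the cost. Everything else follows by routine bookkeeping of equality cases.
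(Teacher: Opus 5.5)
Your proposal is correct and follows essentially the paper's argument: you split the problem into an outer path problem and a pointwise velocity selection via Lemma~\ref{metric_derivative_lemma}, then use Cauchy--Schwarz to identify $\int_0^1|\dot\rho_t|^2\,dt$ with the squared length, and you track the equality cases explicitly, which is what actually justifies the ``any optimal solution'' wording. One simplification: if you start your chain with $m^2(\rho_0,\rho_1)=\int_0^1\|v_t\|^2\,dt \geq \int_0^1|\dot\rho_t|^2\,dt$, then step (i) falls out as one more equality case, so you need no replacement or selection argument beyond what Lemma~\ref{metric_derivative_lemma} already assumes.
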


\begin{proof}
	Follows directly as outlined above.
\end{proof}

\begin{rmk}
	The minimum-length curves connecting each $\rho_0$ to a given $\rho_1$ can also be characterized as steepest-descent curves (i.e. gradient flows) of the distance function $m(\cdot,\rho_1)$. If such gradients can be defined uniquely, this provides a feedback controller $k: \Reach \to \V$ achieving optimal state transfer. This is classical, and connects the metric space picture with optimal control theory and the Hamilton-Jacobi-Bellman equation. We will have more to say about this in Section \ref{gradient_flows_sec}.
\end{rmk}

We have now effectively decomposed our problem into three component subproblems: 1) selection of a final state $\rho_f$, 2) selection of a minimum-length continuous path which connects $\rho_0$ to $\rho_f$, and 3) selection of a minimum-norm velocity field $v$ which generates that path. A cartoon depiction of this decomposition is shown in Figure \ref{fig:prob_decomp}.

\begin{figure}[ht]
    \centering
    \includegraphics[width=0.5\linewidth]{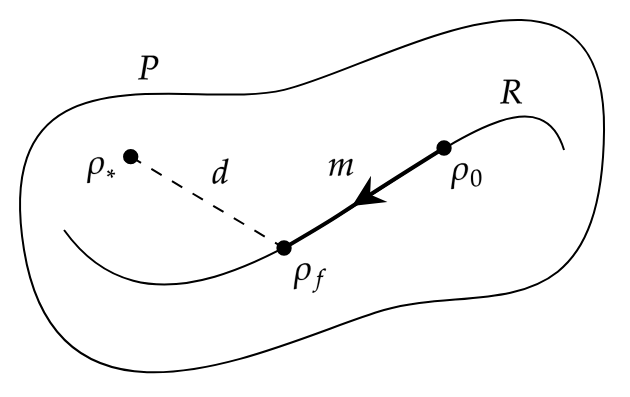}
    \caption{Depiction of problem decomposition as outlined in Theorems \ref{decomposition_thm} and \ref{state_transfer_geodesic_thm}. The solution to the mixing problem \eqref{mixing_prob} traverses a constant-speed geodesic from $\rho_0$ to $\rho_f$ in the reachable set $\Reach$. The final state $\rho_f$ is selected to optimally trade off between the distance $m$ on $\Reach$ induced by the mixing effort and the distance $d$ on $\P$ measuring mixedness. Note that the target density $\rho_*$ is generally not in the reachable set.}
    \label{fig:prob_decomp}
\end{figure}

While this picture recasts optimal state transfer as computation of minimum-length paths (i.e. geodesics), it is not yet constructive: given the metric $m$, we could in principle recover minimum-length paths, or given the minimum length paths, we could recover $m$, but at the moment we have direct access to neither. In the next subsection, we show that when the norm $\| \cdot \|$ arises from an inner product, $(\Reach,m)$ carries the formal structure of an infinite-dimensional Riemannian manifold. This provides us with the apparatus needed to compute necessary conditions for optimality and geodesic equations, a task which we take up in Section \ref{general_k_sec}.

\subsection{Riemannian Structure} \label{riemannian_structure_sec}

Now, we describe how the reachable set $(\Reach,m)$ carries the formal structure of an infinite-dimensional Riemannian manifold when the norm $\| \cdot \|$ is induced by an inner product. This section is modeled on Otto's work on the geometry of the 2-Wasserstein space of optimal transport theory \citep{Otto2001}.

To describe a Riemannian manifold, we need three things:
\begin{enumerate}
	\item a manifold $M$,
	\item a description of the tangent spaces $T_x M$, 
	\item an inner product $\lb \cdot , \cdot \rb_x$ on each tangent space.
\end{enumerate}
In our setting, we identify each of these with the following:
\begin{enumerate}
	\item the set of reachable states $\Reach \subset \P$,
	\item the set of perturbations $\tau$ at a given $\rho$ satisfying
	\begin{equation} \label{tangent_space_tau_eqn}
		\tau = - \nabla \cdot (\rho v) \quad \text{for some} \quad v \quad \text{s.t.} \quad \nabla \cdot v = 0 ,
	\end{equation}
    or alternatively, the corresponding equivalence classes of such $v$:
    \begin{equation} \label{equivalence_classes_eqn}
        [v]_\rho ~:=~\{ v ~:~ \tau = - \nabla \cdot (\rho v) , ~ \nabla \cdot v = 0 \} ,
    \end{equation}
	\item the inner product
    \begin{equation} \label{inner_prod_eqn}
        \lb \tau_1 , \tau_2 \rb_\rho ~:=~ \lb \bar{v}_1 , \bar{v}_2 \rb_\V ,
    \end{equation}
    where $\lb \cdot , \cdot \rb_\V$ is the inner product on $\V$ that is assumed to induce the norm $\| \cdot \|$, and $\bar{v}_i$ is the unique minimal-norm representative of the equivalence class \eqref{equivalence_classes_eqn} for $\tau_i$.
\end{enumerate}

We can see that this indeed recovers the metric as defined earlier, since by taking $\partial_t \rho = \tau_1 = \tau_2$ in \eqref{inner_prod_eqn}, we obtain exactly the characterization \eqref{metric_derivative_lem} of the metric derivative. The existence and uniqueness of the minimal norm representatives $\bar{v}_i$ for each equivalence class $[v_i]_\rho$ may be justified by the \emph{Hilbert projection theorem}, which proceeds as follows.

First, observe that each equivalence class is defined by two constraints: $\tau_i = - \nabla \cdot (\rho v_i)$ and $\nabla \cdot v_i = 0$. This latter constraint set is exactly the nullspace of the operator $\nabla \cdot \ph$, and is thus closed\footnote{We suppose that $\V$ is a Hilbert space, i.e., that it is complete in the norm $\| \cdot \|$, and that the operators $\nabla \cdot \ph$ and $\nabla \cdot (\rho \ph)$ are continuous as operators from $\V$ to an appropriate target. We verify these assumptions later in the cases we care about.}. The former constraint set is a translated nullspace: it is equal to $v_i + \Nu(\nabla \cdot (\rho \ph))$, for any $v_i$ which satisfies the constraint, and is thus also closed. The equivalence class $[v_i]_\rho$ is given by the intersection of these two constraint sets, which is then seen to be an affine set, and closed, since it is the intersection of closed sets. The Hilbert projection theorem then asserts the existence of a unique minimal-norm element $\bar{v}_i$ in this set. Furthermore, the operation taking each equivalence class $[v_i]_\rho$ to its minimal norm representative $\bar{v}_i$:
\begin{equation}
    \Theta_{\rho} : \V \to \V , ~ v_i \mapsto \bar{v}_i \in [v_i]_\rho
\end{equation}
is a $\V$-orthogonal projection.

We can in fact say more here. We can give a characterization of the minimum-norm element $\bar{v}_i$ as follows. Combining both constraint sets into the nullspace of a single block operator, we have
\begin{equation} \label{v_nullspace_eqn}
    [v_i]_\rho ~=~ v_i + \Nu \left( \begin{bmatrix}
        \nabla \cdot (\rho \ph) \\ \nabla \cdot \ph
    \end{bmatrix} \right) .
\end{equation}
The minimum norm element $\bar{v}_i$ must be perpendicular to the nullspace of the block operator above, and thus -- by the fundamental theorem of linear algebra -- an element of the closure of the image space of the adjoint:
\begin{equation} \label{v_imagespace_eqn}
    \bar{v}_i ~\in~ \overline{\Im \left( \begin{bmatrix}
        \nabla \cdot (\rho \ph) \\ \nabla \cdot \ph
    \end{bmatrix}^* \right)} ~=~ \overline{\Im \big( \begin{bmatrix}
        (\nabla \cdot (\rho \ph))^* & (\nabla \cdot \ph)^*
    \end{bmatrix} \big)} .
\end{equation}
A more explicit expression for this image space will be given in the next subsection, where it is assumed that $\| \cdot \|$ takes the particular form $\| K \cdot \|_{L^2}$. We make the formal assumption that this image is closed\footnote{This is valid if the image spaces of the two individual operators meet at a positive Friedrichs angle.} so that the closure may be dropped.

At this point, it would be very natural to take a route parallel to \citet{Otto2001}, and show that this Riemannian structure is induced by submersion on Arnold's group of volume-preserving diffeomorphisms \citep{Arnold1966}:
\begin{equation}
	\sdiffomega ~:=~ \{ \phi \in \diffomega ~:~  \det D \phi \equiv 1 \} .
\end{equation}
In this viewpoint, the nullspace and image space of the projection $\Theta_\rho$ may be understood as the \emph{vertical} and \emph{horizontal} components of this submersion. However, we will postpone this discussion to Section \ref{lagrangian_lift_sec} in the interest of using the machinery we have just developed to obtain constructive results regarding efficient mixing fields.

\subsection{The Weighted Helmholtz Decomposition} \label{helmholtz_sec}

We now give an explicit form to the minimal-norm $\bar{v}_i$'s described in the previous subsection using the \emph{weighted Helmholtz decomposition}. For concreteness of results, we narrow our focus to the case where $\V$ embeds in $L^2$ and where the norm on $\V$ takes the form $\| \cdot \| =: \| \cdot \|_\V = \| K \cdot \|_{L^2}$ for some linear operator $K: \V \to L^2$. Equivalently, $\| \cdot \| =: \| \cdot \|_\V$ is induced by the inner product
\begin{equation} \label{inner_product_eqn}
	\lb w , v \rb_\V ~:=~ \lb K w , K v \rb_{L^2} ~=~ \lb w , K^*K v \rb_{\V \hookrightarrow L^2} ,
\end{equation}
where $K^*$ denotes the $L^2$-adjoint of $K$. Here, it is essential to distinguish between the underlying \emph{set} of vector fields $\V$ and the \emph{geometry} placed upon it. $\V$ comes equipped with \emph{two} geometries: the geometry induced by $\lb \cdot , \cdot \rb_\V$ (defined via the operator $K$) and the geometry induced by the embedding $\V \hookrightarrow L^2$. Equation \eqref{inner_product_eqn} relates these two geometries to each other.

We require $K$ to be injective so that $\| K \cdot \|_{L^2}$ is a proper norm, densely defined (in $L^2$) so that $K^*$ is defined, and to have closed range so that (together with injectivity) $K^*K$ is boundedly invertible. This covers all of the cases that we care about, including the kinetic energy, enstrophy, and palenstrophy norms mentioned in Section \ref{problem_formulation_sec}.

With an explicit expression for the inner product $\lb \cdot , \cdot \rb_\V$, we may now compute the formal adjoint of the operator\footnote{Here $\mu \in \{ \rho , \1 \}$: the weight $\mu = \rho$ corresponds to the dynamic constraint $\tau = -\nabla \cdot (\rho v)$, and $\mu = \1$ to the incompressibility constraint $\nabla \cdot v = 0$.} $\nabla \cdot (\mu \ph)$ as follows. First, we compute the $L^2$-adjoint of this operator (i.e., treating $\V$ with the ambient $L^2$ inner product), using the divergence theorem to integrate by parts:
\begin{equation} \label{div_thm_eq}
    \int_\Omega \nabla \cdot (\phi \mu v) ~=~ \int_\Omega \nabla \phi \cdot (\mu v) + \phi \nabla \cdot (\mu v) ~=~ \int_\Omega \mu \nabla \phi \cdot v + \phi \nabla \cdot (\mu v) ~=~ \int_{\partial \Omega} \phi \mu v \cdot \hat{n} .
\end{equation}
By careful choice of domain, we can ensure that the boundary contribution is zero, thus
\begin{equation}
    \int_\Omega \mu \nabla \phi \cdot v ~=~ - \int_\Omega \phi \, \nabla \cdot (\mu v) \qquad \Leftrightarrow \qquad \langle \mu \nabla \phi , v \rangle_{L^2} ~=~ \langle \phi , - \nabla \cdot (\mu v) \rangle_{L^2} .
\end{equation}
This identifies $\mu \nabla \ph$ and $- \nabla \cdot (\mu \ph)$ as formal $L^2$-adjoints. To obtain the $\V$-adjoint, we may then apply \eqref{inner_product_eqn}, which gives the identity $\lb w , v \rb_{\V \hookrightarrow L^2} = \lb (K^*K)^{-1} w , v \rb_\V$, to obtain (noting that $K^*K$ and $(K^*K)^{-1}$ are self-adjoint):
\begin{equation}
    \langle \phi , - \nabla \cdot (\mu v) \rangle_{L^2} ~=~ \langle \mu \nabla \phi , v \rangle_{\V \hookrightarrow L^2} ~=~ \langle (K^*K)^{-1} \mu \nabla \phi , v \rangle_\V .
\end{equation}
We thus recognize $(K^*K)^{-1} \mu \nabla \ph$ as the formal $\V$-adjoint of $- \nabla \cdot (\mu \ph)$.

Thus, we expect that $\V$ admits the orthogonal decomposition
\begin{equation}
    \V ~\cong~ \Im\big( (K^*K)^{-1} \mu \nabla \ph \big) \oplus \Nu\big( -\nabla \cdot (\mu \ph) \big) .
\end{equation}
A depiction of this decomposition is shown in Figure \ref{fig:helmholtz}.

\begin{figure}[ht]
    \centering
    \includegraphics[width=0.8\linewidth]{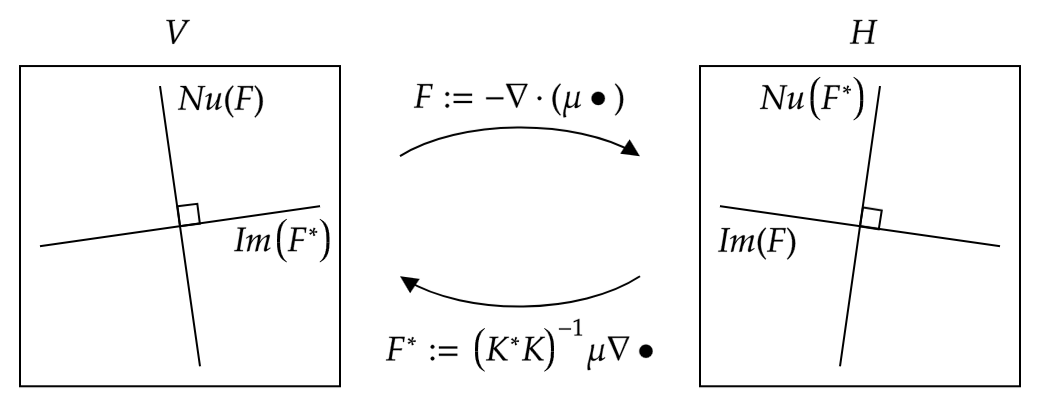}
    \caption{Depiction of the weighted Helmholtz decomposition as a fundamental relation between the nullspace of an operator and the image space of its adjoint. The operator $F:=-\nabla \cdot(\mu\ph)$ maps $\V \to \text{Im}(F) \subset \H$ while the adjoint $F^* = (K^*K)^{-1} \mu \nabla \ph$ maps $\H \to \Im(F^*) \subset \V$. The relevant subspaces are orthogonal complements, supposing that the respective image spaces are closed.}
    \label{fig:helmholtz}
\end{figure}

The weighted Helmholtz decomposition makes this formal decomposition rigorous. We provide a proof of this result under the following assumptions.

\begin{assumptions} \label{helmholtz_assumptions}
    \leavevmode
    \begin{itemize}
        \item The domain $\Omega$ is a compact, connected, $n$-dimensional Riemannian manifold with Lipschitz (or empty) boundary.
        \item The space $\V$ is a Hilbert space (i.e.\ complete in the norm $\| \cdot \| = \| K \cdot \|_{L^2}$).
        \item The operator $K$ is injective, densely defined in $L^2$, and has closed range.
        \item The density $\mu$ is measurable and uniformly bounded above and below by positive constants: $0 < c \leq \mu \leq C < \infty$.
        \item There exists a Hilbert space $\H$ of functions from $\Omega \to \R$ satisfying the hypotheses (i)--(iii) of the Lax-Milgram theorem stated in the proof below.
    \end{itemize}
\end{assumptions}

The final assumption is deliberately left abstract: $\H$ is the space of solutions to a certain Poisson problem, and depends both on the operator $K$ and on the relevant boundary conditions. We will give concrete examples of such $\H$ in the specific cases of the kinetic energy penalty (Section \ref{example_energy_sec}) and the enstrophy penalty (Section \ref{example_enstrophy_sec}). We now state the result.

\begin{thm}[Weighted Helmholtz Decomposition] \label{weighted_helmholtz_thm}
    Under Assumptions \ref{helmholtz_assumptions}, each $v \in \V$ admits a unique decomposition
	\begin{equation}
		v ~=~ (K^*K)^{-1} \mu \nabla \xi + w ,
	\end{equation}
	where $\xi \in \H$ and $\nabla \cdot (\mu w) = 0$, with the derivatives interpreted in the distributional sense. Furthermore, the decomposition is $\V$-orthogonal: $\lb (K^*K)^{-1} \mu \nabla \xi , w \rb_{\V} = \lb \mu \nabla \xi , w \rb_{L^2} = 0$.
\end{thm}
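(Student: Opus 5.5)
The plan is to obtain $\xi$ as the solution of a weighted Poisson-type problem via the Lax--Milgram theorem, and then define $w$ as the remainder. Given $v \in \V$, we look for $\xi \in \H$ such that $w := v - (K^*K)^{-1}\mu\nabla\xi$ satisfies $\nabla\cdot(\mu w) = 0$ in the distributional sense. Testing against $\phi \in \H$ and integrating by parts as in \eqref{div_thm_eq}, with boundary terms absorbed into the choice of $\H$, gives the weak formulation
\begin{equation*}
	B(\xi,\phi) ~:=~ \lb (K^*K)^{-1}\mu\nabla\xi , \mu\nabla\phi \rb_{L^2} ~=~ \lb v , \mu\nabla\phi \rb_{L^2} ~=:~ \ell(\phi) \qquad \forall \phi \in \H .
\end{equation*}
I would state the Lax--Milgram hypotheses as (i) $B$ is bounded on $\H\times\H$, (ii) $B$ is coercive, $B(\phi,\phi) \geq \gamma \|\phi\|_\H^2$, and (iii) $\ell$ is a bounded linear functional on $\H$. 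Lax--Milgram then gives a unique $\xi \in \H$.

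The key steps run in order. First, use Assumptions \ref{helmholtz_assumptions} to see that $(K^*K)^{-1}$ is bounded and self-adjoint; it is positive since $K$ is injective with closed range. Together with the bounds $c \le \mu \le C$, this makes $B$ symmetric and well defined. The remaining boundedness and coercivity are exactly the hypotheses (i)--(iii) placed on $\H$. Second, define $w = v - (K^*K)^{-1}\mu\nabla\xi$. The weak equation says $\lb \mu w , \nabla\phi\rb_{L^2} = 0$ for all $\phi \in \H$, which is the distributional statement $\nabla\cdot(\mu w) = 0$ provided $\H$ contains enough test functions, for instance $C_c^\infty(\Omega)$ or its mean-zero version, since constants contribute nothing.

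Third, check orthogonality. Using \eqref{inner_product_eqn},
\begin{equation*}
	\lb (K^*K)^{-1}\mu\nabla\xi , w\rb_\V ~=~ \lb K^*K (K^*K)^{-1}\mu\nabla\xi , w \rb_{L^2} ~=~ \lb \mu\nabla\xi , w\rb_{L^2} ~=~ \lb \nabla\xi, \mu w\rb_{L^2} ~=~ 0 ,
\end{equation*}
where the last equality is the weak divergence-free condition tested with $\phi = \xi \in \H$. Fourth, prove uniqueness. Suppose $(K^*K)^{-1}\mu\nabla\xi_1 + w_1 = (K^*K)^{-1}\mu\nabla\xi_2 + w_2$. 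Then the difference $d = (K^*K)^{-1}\mu\nabla(\xi_1-\xi_2) = w_2 - w_1$ is $\V$-orthogonal to itself by the same computation, so $d = 0$. Consequently $w_1 = w_2$. Moreover $B(\xi_1-\xi_2,\cdot) = 0$, and coercivity gives $\xi_1 = \xi_2$.

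The main obstacle is the functional-analytic bookkeeping in the second and third steps. One must ensure that $(K^*K)^{-1}\mu\nabla\xi$ actually lies in $\V$, i.e.\ in the domain where $K$ acts, and not merely in $L^2$. One must also ensure that the class of test functions $\H$ is simultaneously large enough to encode the distributional divergence condition and that $\xi$ itself is admissible as a test function. I would handle this by noting that $(K^*K)^{-1}$ maps $L^2$ into the domain of $K^*K \subset \V$. I would let the abstract hypotheses on $\H$ absorb the boundary conditions, deferring concrete verification to the energy and enstrophy examples.
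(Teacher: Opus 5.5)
Your proposal is correct and follows the paper's proof essentially step for step. Both apply Lax--Milgram to the weak Poisson problem $\lb \mu\nabla\psi,(K^*K)^{-1}\mu\nabla\xi\rb_{L^2}=\lb\mu\nabla\psi,v\rb_{L^2}$ for all $\psi\in\H$, define $w$ as the remainder, and get orthogonality by testing with $\psi=\xi$; the paper takes uniqueness directly from Lax--Milgram via the equivalence of the decomposition with this Poisson problem, which amounts to your self-orthogonality argument. One caveat on your bookkeeping remark: for the enstrophy choice $\H=L^2_0$ used later, $\mu\nabla\xi$ lies only in $\dot{H}^{-1}$, so $(K^*K)^{-1}$ must be read as the Riesz isomorphism $\V'\to\V$ (there $(-\Delta)^{-1}:\dot{H}^{-1}\to\dot{H}^1$), not as a map from $L^2$ into the domain of $K^*K$.
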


\begin{proof}
    The main idea of the proof is to recognize that the following two statements are equivalent:
    \begin{enumerate}[label=\arabic*)]
        \item $v = (K^*K)^{-1} \mu \nabla \xi + w$ for some $w$ with $\nabla \cdot (\mu w) = 0$;
        \item $\xi$ solves the Poisson problem $\nabla \cdot \big( \mu (K^*K)^{-1} \mu \nabla \xi \big) = \nabla \cdot (\mu v)$.
    \end{enumerate}
    Statement (2) follows from (1) by applying the operator $\nabla \cdot (\mu \ph)$ to both sides; conversely (1) follows from (2) by defining $w := v - (K^*K)^{-1} \mu \nabla \xi$. Both statements are interpreted in the distributional sense, i.e., in duality with test functions $\psi \in \H$.

    It remains to establish existence and uniqueness of a solution $\xi \in \H$ to the Poisson problem in (2). Written in weak form, this problem reads
    \begin{equation} \label{helmholtz_weak_form_eqn}
        a(\psi,\xi) ~:=~ \lb \mu \nabla \psi , (K^*K)^{-1} \mu \nabla \xi \rb_{\V \hookrightarrow L^2} ~=~ \lb \mu \nabla \psi , v \rb_{\V \hookrightarrow L^2} ~=:~ f(\psi) \qquad \text{for all } \psi \in \H .
    \end{equation}
    By the Lax-Milgram theorem, \eqref{helmholtz_weak_form_eqn} admits a unique solution $\xi \in \H$, depending continuously on $v$, provided that:
    \begin{enumerate}[label=\roman*)]
        \item Boundedness of the bilinear form: $|a(\psi,\xi)| \lesssim \| \psi \|_{\H} \| \xi \|_{\H}$;
        \item Coercivity of the quadratic form: $a(\xi,\xi) \gtrsim \| \xi \|_{\H}^2$;
        \item Boundedness of the linear functional: $|f(\psi)| \lesssim \| v \|_{\V} \| \psi \|_{\H}$.
    \end{enumerate}
    In the above, $A \lesssim B$ denotes that $A \leq c B$ for some positive constant $c$, and similarly for $\gtrsim$. These are precisely the hypotheses stated in the final item of Assumptions \ref{helmholtz_assumptions}.

    We note that hypothesis (iii) is precisely the statement that the operator $\nabla \cdot (\mu \ph)$ is continuous from $\V$ into the dual space $\H'$, since $f(\psi) = \lb \psi , - \nabla \cdot (\mu v) \rb_{L^2}$. This is the continuity required of the constraint operators in Section \ref{riemannian_structure_sec}, and it will be verified concretely along with (i)--(ii) in Sections \ref{example_energy_sec} and \ref{example_enstrophy_sec}.

    Finally, the decomposition is $\V$-orthogonal. Substituting $\psi = \xi$ into \eqref{helmholtz_weak_form_eqn} and using \eqref{inner_product_eqn} together with the self-adjointness of $(K^*K)^{-1}$, we obtain
    \begin{align}
        \lb \bar{v} , w \rb_\V ~&=~ \big\lb (K^*K)^{-1} \mu \nabla \xi , \, v - (K^*K)^{-1} \mu \nabla \xi \big\rb_\V \\
        ~&=~ \big\lb \mu \nabla \xi , \, v \big\rb_{\V \hookrightarrow L^2} ~-~ \big\lb \mu \nabla \xi , \, (K^*K)^{-1} \mu \nabla \xi \big\rb_{\V \hookrightarrow L^2} \\
        ~&=~ \lb \mu \nabla \xi , v \rb_{\V \hookrightarrow L^2} ~-~ a(\xi,\xi) ~=~ 0 . ~~ \qedhere
    \end{align}
\end{proof}

Some remarks concerning this theorem are now in order. First, the weighted Helmholtz decomposition generalizes the standard case where $\mu = \1$ is the uniform density and $K = I$ (so that $\V = L^2$). In that case, the theorem reads: every vector field $v \in L^2$ can be decomposed as $v = \nabla \xi + w$, with $\nabla \cdot w = 0$. The components $\nabla \xi$ and $w$ go by many names, including ``potential'', ``irrotational'', or ``conservative'' for $\nabla \xi$, and ``rotational'', ``solenoidal'', or ``incompressible'' for $w$.

In keeping with this usage, we will refer to $(K^*K)^{-1} \mu \nabla \xi$ and $w$ as the ``potential'' and ``rotational'' components of $v$, respectively -- even for general $\mu$, $K$, where $(K^*K)^{-1} \mu \nabla \xi$ may not be the gradient of some function and $w$ may not be divergence-free. We will also refer to the orthogonal projections
\begin{align}
    \Sigma_{\mu,K} &~:~ \V \to \Im\big( (K^*K)^{-1} \mu \nabla \ph \big) , ~~ v \mapsto (K^*K)^{-1} \mu \nabla \xi ,\\
    \Pi_{\mu,K} &~:~ \V \to \Nu\big( -\nabla \cdot (\mu \ph) \big) , ~~ v \mapsto w 
\end{align}
as the ``potential'' and ``rotational'' Helmholtz projections\footnote{The specific orthogonal projection $\Pi_{\1,I}$ is often referred to as the \emph{Leray} projection in the fluid mechanics literature.} with respect to $\mu$, $K$. We will often drop the explicit dependence on $\mu$, $K$ when it is clear from context. Since $\Sigma$, $\Pi$ are complementary orthogonal projections, recall also that they satisfy the identities
\begin{align}
    \Pi + \Sigma ~&=~ I , \label{projection_additive_eqn} \\
    \Pi \Sigma ~=~ \Sigma \Pi ~&=~ 0 . \label{projection_multiplicative_eqn}
\end{align}

Second, since one encounters many variations of the Helmholtz decomposition with various properties, we find it instructive to emphasize the following points regarding the version above. Here, we restrict our attention to the classical setting ($\mu = \1$, $K=I$):
\begin{itemize}
    \item The above decomposition is always unique and holds for all $v \in L^2$.
    \item Since general $L^2$ vector fields do not have well-defined boundary values, there are no boundary conditions imposed on $v$.
    \item Without boundary conditions, $v$ may in general contain harmonic components (i.e., components $u$ such that \emph{both} $u = \nabla \xi$ and $\nabla \cdot u = 0$). This freedom is then fixed by additional specification on $\xi$, that is, by specification of the space $\H$.
    \item If $\Omega$ has nonempty boundary, then this specification comes in the form of a boundary condition on $\xi$. For our application, we will place Dirichlet-type boundary conditions on $\xi$, which force the harmonic components into the rotational part.
    \item If $\Omega$ has empty boundary, then harmonic components \emph{always} go with the rotational part, since they are closed-but-not-exact forms. In this case, the free constant in $\xi$ is then fixed by the zero-mean condition $\int_\Omega \xi = 0$ rather than a boundary condition. Again, this enters in specification of the space $\H$.
    \item If $v$ has more regularity than mere $L^2$ so that its boundary values are well-defined and it also satisfies a no-flux condition $v \cdot \hat{n} = 0$ on $\partial \Omega$, then it has no harmonic components, and $\nabla \xi$ and $w$ also have well-defined boundary values and satisfy a no-flux condition.
\end{itemize}

Now, we may use this theorem to provide an explicit characterization of the minimum-norm $\bar{v}_i$'s of the previous subsection. Using the theorem, we verify
\begin{align}
    (\nabla \cdot (\rho \ph))^* ~&=~ (K^*K)^{-1} \rho \nabla \ph , \\
    (\nabla \cdot \ph)^* ~&=~ (K^*K)^{-1} \nabla \ph ,
\end{align}
and substituting into \eqref{v_imagespace_eqn} (with the assumed closure), obtain
\begin{equation} \label{vbar_potential_form_eqn}
    \bar{v}_i ~=~ (K^*K)^{-1} (\rho \nabla \lambda) + (K^*K)^{-1} (\nabla \gamma) ~=~ (K^*K)^{-1} (\rho \nabla \lambda + \nabla \gamma)
\end{equation}
for some $\lambda, \gamma: \Omega \to \R$. Since the minimal representative $\bar{v}_i$ must itself be incompressible, this expression can also be recast purely in terms of the Helmholtz projections. Applying $\Pi_{\1,K}$ to \eqref{vbar_potential_form_eqn} and noting that $\bar{v}_i = \Pi_{\1,K} \bar{v}_i$ and $\Pi_{\1,K} (K^*K)^{-1} \nabla \gamma = 0$ (since $(K^*K)^{-1} \nabla \gamma \in \Im \left( \Sigma_{\1,K} \right)$), we obtain
\begin{equation}
    \bar{v}_i ~=~ \Pi_{\1,K} \big( (K^*K)^{-1} \rho \nabla \lambda \big) \qquad \Leftrightarrow \qquad \bar{v}_i ~\in~ \Im \left( \Pi_{\1,K} \Sigma_{\rho,K}  \right) .
\end{equation}
These forms for $\bar{v}_i$ will appear throughout the sequel. We will give further interpretation to these expressions in the following section.

\section{Necessary Conditions and Geodesics: General $K$} \label{general_k_sec}

We now use the Riemannian structure described in the previous section to write (formal) necessary conditions and geodesic equations in the case where $\| \cdot \| =: \| \cdot \|_\V = \| K \cdot \|_{L^2}$. We retain the assumptions on $K$ from the previous section. These assumptions will be verified in the particular cases we care about in Sections \ref{example_energy_sec} and \ref{example_enstrophy_sec}.

First, we examine necessary conditions for optimality for the \emph{velocity field selection problem}, restated as follows for convenience:
\begin{equation} \label{vel_selection_eqn}
	\inf_{v} \, \| K v \|_{L^2}^2 \qquad \text{s.t.} \qquad \begin{cases}
		\tau = - \nabla \cdot (\rho v) \\
		0 = \nabla \cdot v
	\end{cases} .
\end{equation}
Recall that $\tau$ and $\rho$ are assumed to be given in the above problem. We assume that the given $\tau$, $\rho$ are such that the problem is feasible.

\begin{thm} \label{vel_sel_necc_cond_thm}
	For $v$ to be optimal in \eqref{vel_selection_eqn}, it is necessary that there exist $\lambda,\gamma : \Omega \to \R$ such that
	\begin{align}
		\tau &~=~ - \nabla \cdot (\rho v) \\
		0 &~=~ \nabla \cdot v \\
		0 &~=~ K^*K v - \rho \nabla \lambda - \nabla \gamma .
	\end{align}
\end{thm}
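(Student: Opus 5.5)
The plan is to treat \eqref{vel_selection_eqn} as a minimum-norm problem over an affine subspace of the Hilbert space $\V$, and to read off the conditions from the orthogonality characterization of Section~\ref{riemannian_structure_sec} together with the adjoint computations of Section~\ref{helmholtz_sec}. The first two equations are simply feasibility, so the content is the stationarity condition $K^*Kv = \rho\nabla\lambda + \nabla\gamma$.

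First, the feasible set is $[v]_\rho = v + \Nu(B)$, where $B$ is the block operator $v \mapsto (\nabla\cdot(\rho v), \nabla\cdot v)$ as in \eqref{v_nullspace_eqn}. Since $\|Kv\|_{L^2}^2 = \|v\|_\V^2$, an optimal $v$ is the $\V$-minimal-norm element of this closed affine set; it exists by the Hilbert projection theorem. It is characterized by the variational condition
\[
\lb v, h\rb_\V = 0 \quad\text{for all } h\in\Nu(B).
\]
This condition is obtained directly by differentiating $\epsilon\mapsto\|v+\epsilon h\|_\V^2$ at $\epsilon=0$. Hence $v \in \Nu(B)^{\perp_\V} = \overline{\Im(B^*)}$, which is \eqref{v_imagespace_eqn}.

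Second, I identify $B^*$ using the adjoint computation preceding Theorem~\ref{weighted_helmholtz_thm}. For $\mu\in\{\rho,\1\}$, the $\V$-adjoint of $-\nabla\cdot(\mu\,\ph)$ is $(K^*K)^{-1}\mu\nabla\ph$. Therefore $\Im(B^*)$ consists of fields $(K^*K)^{-1}(\rho\nabla\lambda + \nabla\gamma)$, with the sign absorbed into $\lambda,\gamma$, and with $\lambda,\gamma$ ranging over the Lax--Milgram spaces $\H$ for the weights $\rho$ and $\1$. This is exactly \eqref{vbar_potential_form_eqn}. Applying the boundedly invertible $K^*K$ then gives the third equation.

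Equivalently, the same result comes out of a Lagrangian computation: form $\|Kv\|^2 + 2\lb\lambda, \tau+\nabla\cdot(\rho v)\rb + 2\lb\gamma, \nabla\cdot v\rb$, vary $v$, and integrate by parts via \eqref{div_thm_eq} with vanishing boundary terms. I would mention this as the formal derivation that fixes the signs.

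The main obstacle is the closure in $\overline{\Im(B^*)}$. The sum of the two images $\Im\big((K^*K)^{-1}\rho\nabla\ph\big) + \Im\big((K^*K)^{-1}\nabla\ph\big)$ need not be closed even though each is closed by Theorem~\ref{weighted_helmholtz_thm}. Without closedness, multipliers $\lambda,\gamma$ exist only as limits. I would invoke the standing formal assumption made in Section~\ref{riemannian_structure_sec} that this image is closed (the positive Friedrichs angle condition), so that the closure may be dropped and genuine $\lambda,\gamma$ exist.
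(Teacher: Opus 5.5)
Your proposal is correct, but its main line differs from the paper's proof. The paper argues purely by formal Lagrange multipliers. It forms $L(v,\lambda,\gamma)=\tfrac12\lb Kv,Kv\rb+\lb\lambda,\tau+\nabla\cdot(\rho v)\rb+\lb\gamma,\nabla\cdot v\rb$, sets the first variations to zero, and integrates the $\tilde v$-variation by parts to read off $K^*Kv-\rho\nabla\lambda-\nabla\gamma=0$. That is the computation you only mention as a sign check.

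Your route instead derives the multiplier form from three ingredients: the first-order condition $v\perp_\V\Nu(B)$, the annihilator identity $\Nu(B)^{\perp_\V}=\overline{\Im(B^*)}$, and the $\V$-adjoint $(K^*K)^{-1}\mu\nabla\ph$. This is the argument of Sections~\ref{riemannian_structure_sec}--\ref{helmholtz_sec} leading to \eqref{vbar_potential_form_eqn}. The paper only observes after the fact that it coincides with the theorem.

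What your route buys is that the existence of $\lambda,\gamma$ becomes an honest consequence rather than a posit. In infinite dimensions a Lagrange multiplier rule needs a constraint qualification, and you pin it down precisely: closedness of $\Im\big((K^*K)^{-1}\rho\nabla\ph\big)+\Im\big((K^*K)^{-1}\nabla\ph\big)$. Each summand is closed, since coercivity of $a$ makes $\xi\mapsto(K^*K)^{-1}\mu\nabla\xi$ bounded below, and the multipliers land in $\H$.

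What the paper's route buys is brevity and the reading of $\lambda$ as a costate and $\gamma$ as a pressure-like multiplier. It also gives a template that extends verbatim to the time-dependent problem of Theorem~\ref{state_transfer_necc_conds_thm}, where the $\tilde\rho$-variation produces the costate equation.
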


\begin{proof}
	We solve this problem by the method of Lagrange multipliers. We first form the Lagrangian
	\begin{equation}
		L(v,\lambda,\gamma) \, := \, \frac{1}{2} \langle K v , K v \rangle + \langle \lambda , \tau + \nabla \cdot (\rho v) \rangle + \langle \gamma , \nabla \cdot v \rangle .
	\end{equation}
	Note that we have scaled the objective in \eqref{vel_selection_eqn} by a factor of $1/2$ -- this simplifies the analysis but does not alter the resulting optimality conditions. We take variations with respect to each parameter, consider the linear parts, and set them equal to zero. This yields the equations
	\begin{align}
		0 ~&=~ \lb \tilde{\lambda} , \tau + \nabla \cdot (\rho v) \rb \\
		0 ~&=~ \lb \tilde{\gamma} , \nabla \cdot v \rb \\
		0 ~&=~ \lb K^*K v , \tilde{v} \rb + \lb \lambda , \nabla \cdot (\rho \tilde{v}) \rb + \lb \gamma , \nabla \cdot \tilde{v} \rb .
	\end{align}
	Since these equations must hold for all perturbations $\tilde{\lambda}, \tilde{\gamma}, \tilde{v}$, we can see that the right hand parts of the inner products in the first two equations must be equal to zero, and we recover our original constraints: $\tau = -\nabla \cdot (\rho v)$ and $\nabla \cdot v = 0$. In the third equation, we first use the divergence theorem to integrate by parts and then combine terms to obtain
	\begin{equation}
		0 ~=~ \lb K^*K v - \rho \nabla \lambda - \nabla \gamma , \tilde{v} \rb .
	\end{equation}
	By the same reasoning, since this equation must hold for all perturbations $\tilde{v}$, the left part of the inner product must be equal to zero, and thus we obtain the system
	\begin{align}
		\tau &~=~ - \nabla \cdot (\rho v) \\
		0 &~=~ \nabla \cdot v \\
		0 &~=~ K^*K v - \rho \nabla \lambda - \nabla \gamma . \label{last_eqn}
	\end{align}
\end{proof}

Since $K^*K$ is invertible by assumption, we may also rewrite Equation \eqref{last_eqn} as
\begin{equation} \label{v_form_eqn}
	v ~=~ (K^*K)^{-1}(\rho \nabla \lambda + \nabla \gamma) ~=~ (K^*K)^{-1}(\rho \nabla \lambda) + (K^*K)^{-1} \nabla \gamma .
\end{equation}
This is exactly the same characterization that we obtained for the optimal $\bar{v}$ using the weighted Helmholtz decomposition in Section \ref{helmholtz_sec}. However, we now have new interpretations for the scalar functions $\lambda$ and $\gamma$ as \emph{Lagrange multipliers} for the two constraints, which we can use to provide further intuition for this equation.

The multiplier $\lambda$ enforces the constraint $\tau = - \nabla \cdot (\rho v)$. Since this constraint describes the dynamics of the system, the interpretation of $\lambda$ is as a dynamic multiplier, or \emph{costate}, which enforces optimality. Indeed, the term $(K^*K)^{-1} (\rho \nabla \lambda) \in \Im(\Sigma_{\rho,K})$ is the minimal-norm velocity field $v$ such that $\tau = -\nabla \cdot (\rho v)$, and it \emph{would} be optimal for the velocity field selection problem \eqref{vel_selection_eqn} were it not for the incompressibility constraint $\nabla \cdot v = 0$.

The multiplier $\gamma$ then enforces this incompressibility constraint by projecting out the component of $(K^*K)^{-1}(\rho \nabla \lambda)$ which fails to meet incompressibility. This can be seen as follows. First, note that $v$ is incompressible, so $\Pi v = v$, and thus $\Sigma v = \Sigma \Pi v = 0$ by \eqref{projection_multiplicative_eqn}. Then, apply the projection $\Sigma = \Sigma_{\1,K}$ to \eqref{v_form_eqn} to obtain:
\begin{align}
    \Sigma v ~&=~ \Sigma \big( (K^*K)^{-1}(\rho \nabla \lambda) \big) + \Sigma \big( (K^*K)^{-1}\nabla \gamma \big) \\
    ~&=~ \Sigma \big( (K^*K)^{-1}(\rho \nabla \lambda) \big) + (K^*K)^{-1}\nabla \gamma \\
    ~&=~ 0 .
\end{align}
Rearranging, we have
\begin{equation}
    (K^*K)^{-1} \nabla \gamma ~=~ - \Sigma \big( (K^*K)^{-1}(\rho \nabla \lambda) \big) ,
\end{equation}
so that by \eqref{projection_additive_eqn}, we obtain
\begin{align}
    v ~&=~ (K^*K)^{-1} ( \rho \nabla \lambda) + (K^*K)^{-1} \nabla \gamma \\
    ~&=~ (K^*K)^{-1} ( \rho \nabla \lambda) - \Sigma \big( (K^*K)^{-1}(\rho \nabla \lambda) \big) \\
    ~&=~ (I - \Sigma) \big( (K^*K)^{-1} ( \rho \nabla \lambda) \big) \\
    ~&=~ \Pi \big( (K^*K)^{-1} ( \rho \nabla \lambda) \big) . \label{compact_v_eqn}
\end{align}
In this way, the role of $\gamma$ is analogous to the role of pressure in the incompressible Euler or Navier-Stokes equations.

It is also worth mentioning that the projection $\Pi = \Pi_{\1,K}$ can be written in the more explicit form
\begin{align}
    \Pi_{\1,K} ~&=~ I - (\nabla \cdot \ph)^\dagger  (\nabla \cdot \ph) \\
    ~&=~ I - (\nabla \cdot \ph)^* \big( \nabla \cdot (\nabla \cdot \ph)^* \big)^{-1} (\nabla \cdot \ph) \\
    ~&=~ I - (K^*K)^{-1} \nabla \big( \nabla \cdot (K^*K)^{-1} \nabla \big)^{-1} (\nabla \cdot \ph) ,
\end{align}
where $(\cdot)^\dagger$ denotes the pseudoinverse. This expression does not appear to simplify in general. (The notable exception is the case where $\nabla$, $K$ commute.)

With these interpretations in hand, we may now examine necessary conditions for optimality for the \emph{optimal state-transfer problem} \eqref{transfer_prob}.

\begin{thm} \label{state_transfer_necc_conds_thm}
	For the pair $(\rho,v)$ to be optimal in \eqref{transfer_prob} with $\| \cdot \| = \| K \cdot \|_{L^2}$, it is necessary that there exist $\lambda,\gamma : [0,1] \times \Omega \to \R$ such that $\rho,v,\lambda,\gamma$ satisfy
	\begin{align}
		\partial_t \rho &~=~ - \nabla \cdot (\rho v) \label{necc_cond_dyn_eqn} \\
		0 &~=~ \nabla \cdot v \\
		0 &~=~ \partial_t \lambda + \nabla \lambda \cdot v \\
		0 &~=~ K^*K v - \rho \nabla \lambda - \nabla \gamma  , \label{necc_cond_v_eqn}
	\end{align}
	in addition to the fixed boundary conditions on $\rho_0$ and $\rho_1$.
\end{thm}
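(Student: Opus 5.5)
The plan is to treat \eqref{transfer_prob} as a PDE-constrained optimal control problem and apply the formal method of Lagrange multipliers on the space--time domain $[0,1]\times\Omega$, in the same way as the proof of Theorem~\ref{vel_sel_necc_cond_thm}. The only new ingredient relative to that theorem is that the dynamic constraint now contains $\partial_t \rho$. Its multiplier $\lambda$ therefore becomes a time-dependent costate, and it acquires an evolution equation of its own.

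I would form the Lagrangian
\begin{equation*}
	\mathcal{L}(\rho,v,\lambda,\gamma) \,:=\, \int_0^1 \Big[ \tfrac{1}{2}\lb Kv_t, Kv_t\rb + \lb \lambda_t , \partial_t\rho_t + \nabla\cdot(\rho_t v_t)\rb + \lb \gamma_t , \nabla\cdot v_t\rb \Big] dt ,
\end{equation*}
with the endpoint values $\rho_0,\rho_1$ held fixed. Admissible perturbations $\tilde\rho$ therefore satisfy $\tilde\rho_0=\tilde\rho_1=0$. The objective has again been scaled by $1/2$, which does not change the stationarity conditions. I would then take first variations in each argument. Varying $\lambda$ and $\gamma$ returns the constraints $\partial_t\rho = -\nabla\cdot(\rho v)$ and $\nabla\cdot v=0$. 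Varying $v$ at each fixed $t$ is literally the computation in the proof of Theorem~\ref{vel_sel_necc_cond_thm}, with $\tau = \partial_t\rho_t$. After integrating by parts in space via \eqref{div_thm_eq}, it gives $K^*Kv - \rho\nabla\lambda - \nabla\gamma = 0$.

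The new step is the variation in $\rho$. This yields
\begin{equation*}
	\int_0^1 \lb \lambda_t, \partial_t\tilde\rho_t + \nabla\cdot(\tilde\rho_t v_t)\rb \,dt = 0 \quad\text{for all admissible } \tilde\rho .
\end{equation*}
Integrating by parts in time, the boundary terms vanish because $\tilde\rho_0=\tilde\rho_1=0$, leaving $-\lb\partial_t\lambda,\tilde\rho\rb$. Integrating by parts in space, with the same boundary choice used for the Helmholtz adjoint computation, gives $\lb \lambda,\nabla\cdot(\tilde\rho v)\rb = -\lb \nabla\lambda\cdot v, \tilde\rho\rb$. Since $\tilde\rho$ is arbitrary in the interior, this forces $\partial_t\lambda + \nabla\lambda\cdot v = 0$, so the costate is transported by the same flow as $\rho$. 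Because $\rho$ is pinned at both ends, no terminal condition on $\lambda$ arises, which is consistent with the statement.

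The main obstacle is justifying the formal calculus, in particular the spatial boundary terms in the $\rho$-variation. The boundary integral $\int_{\partial\Omega}\lambda\tilde\rho\, v\cdot\hat n$ vanishes only if $v\cdot\hat n = 0$ on $\partial\Omega$, which holds under either the no-flux or no-slip conditions built into $\V$, or if $\partial\Omega=\emptyset$. I would state this explicitly. Beyond that, I would follow the paper's stated formal stance and simply assume that the multipliers exist with enough regularity for these integrations by parts, rather than proving a constraint qualification. One could also appeal to Theorem~\ref{state_transfer_geodesic_thm} to say that optimality forces the $v$-equation pointwise in time. That gives an independent check that the $v$-variation agrees with Theorem~\ref{vel_sel_necc_cond_thm}.
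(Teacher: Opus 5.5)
Your proposal is correct and follows the paper's proof essentially step for step. It uses the same $1/2$-scaled Lagrangian, reduces the $\lambda$, $\gamma$, and $v$ variations to Theorem~\ref{vel_sel_necc_cond_thm} with $\tau=\partial_t\rho$, and handles the $\rho$-variation by integrating by parts in time (using $\tilde\rho_0=\tilde\rho_1=0$) and in space to get $\partial_t\lambda+\nabla\lambda\cdot v=0$.

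One small imprecision: the boundary term $\int_{\partial\Omega}\lambda\tilde\rho\, v\cdot\hat n$ also vanishes under a homogeneous Dirichlet condition on $\lambda$. That is the paper's choice of $\H$ in the energy case, where $\V=L^2$ carries no no-flux condition. So ``only if $v\cdot\hat n=0$'' overstates what is needed, though this does not affect the argument.
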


\begin{proof}
	We solve this problem by the method of Lagrange multipliers just as in the proof of Theorem \ref{vel_sel_necc_cond_thm}. The Lagrangian is given by
	\begin{equation}
		L(\rho,v,\lambda,\gamma) \, := \, \frac{1}{2} \langle K v , K v \rangle + \langle \lambda , \partial_t \rho + \nabla \cdot (\rho v) \rangle + \langle \gamma , \nabla \cdot v \rangle .
	\end{equation}
	After taking variations and setting the linear parts equal to zero, we obtain
	\begin{align}
		0 ~&=~ \lb \tilde{\lambda} , \partial_t \rho + \nabla \cdot (\rho v) \rb \\
		0 ~&=~ \lb \tilde{\gamma} , \nabla \cdot v \rb \\
		0 ~&=~ \lb \lambda , \partial_t \tilde{\rho} + \nabla \cdot (\tilde{\rho} v) \rb \\
		0 ~&=~ \lb K^*K v , \tilde{v} \rb + \lb \lambda , \nabla \cdot (\rho \tilde{v}) \rb + \lb \gamma , \nabla \cdot \tilde{v} \rb .
	\end{align}
	The first, second, and fourth equations are identical to those investigated in the proof of Theorem \ref{vel_sel_necc_cond_thm} (with $\tau = \partial_t \rho$), and we obtain the same equations. For the third equation, we first split the inner product into terms involving $\partial_t \tilde{\rho}$ and $\nabla \cdot (\tilde{\rho} v)$, integrate the first in time (the boundary term vanishing since the endpoints $\rho_0, \rho_1$ are fixed, so that $\tilde{\rho}_0 = \tilde{\rho}_1 = 0$) and the second in space (again using the divergence theorem) to obtain
	\begin{equation}
		0 ~=~ \lb - \partial_t \lambda , \tilde{\rho} \rb + \lb -\nabla \lambda , \tilde{\rho} v \rb .
	\end{equation}
	Finally, we move $v$ into the left side of the second inner product and combine terms to obtain
	\begin{equation}
		0 ~=~ -\lb \partial_t \lambda + \nabla \lambda \cdot v , \tilde{\rho} \rb .
	\end{equation}
	By the same reasoning as before, the left side of the inner product must be equal to zero. Thus we obtain the system 
	\begin{align}
		\partial_t \rho &~=~ - \nabla \cdot (\rho v) \\
		0 &~=~ \nabla \cdot v \\
		0 &~=~ \partial_t \lambda + \nabla \lambda \cdot v \\
		0 &~=~ K^*K v - \rho \nabla \lambda - \nabla \gamma  . ~~ \qedhere
	\end{align}
\end{proof}

The above conditions are exactly the original dynamics (including incompressibility), the condition that $v_t$ be optimal for each $\partial_t \rho_t$ (as in Theorem \ref{vel_sel_necc_cond_thm}), and a new \emph{costate equation}, describing the evolution of the costate $\lambda$ that enforces optimality. Since optimal here means shortest path, the interpretation of the costate equation in the Riemannian context is as a geodesic equation\footnote{In Riemannian geometry, the geodesic equation describes curves of zero intrinsic acceleration, and can also be derived via the Riemannian curvature. Here, these curves are derived via a variational principle instead.}.

Thus, on one hand, these necessary conditions describe geodesics/shortest paths in this Riemannian framework. On the other hand, since we have constructed the metric $m$ from the transfer cost in the optimal state-transfer problem, these geodesics exactly describe flow fields which transfer states with minimal effort (i.e. with maximal efficiency). Therefore, maximally efficient mixing fields must always be geodesics of this form.

Rewriting the dynamics as $\partial_t \rho = - \nabla \rho \cdot v$ (equivalent under incompressibility), and repackaging the constraints on $v$ as in \eqref{compact_v_eqn}, we may write the system \eqref{necc_cond_dyn_eqn}-\eqref{necc_cond_v_eqn} in the following compact form:
\begin{align}
	\partial_t \rho ~&=~ - \nabla \rho \cdot \Pi_{\1,K} \big( (K^*K)^{-1} (\rho \nabla \lambda) \big) \label{gen_k_dynamics_1} \\
	\partial_t \lambda ~&=~ - \nabla \lambda \cdot \Pi_{\1,K} \big( (K^*K)^{-1} (\rho \nabla \lambda) \big) . \label{gen_k_dynamics_2}
\end{align}
Now, this system has a simple structure: $\rho$ and $\lambda$ are transported together by the velocity field $v = \Pi_{\1,K} \big( (K^*K)^{-1} (\rho \nabla \lambda) \big)$. Thus, it should be possible to write a single PDE for the evolution of $\rho \nabla \lambda$, and thus for the evolution of $v$. And indeed, we can.

\begin{thm}
	The evolution of the velocity field $v = \Pi_{\1,K} \big( (K^*K)^{-1} (\rho \nabla \lambda) \big)$ under the dynamics \eqref{gen_k_dynamics_1}-\eqref{gen_k_dynamics_2} satisfies the equation
	\begin{equation}
		\partial_t v ~=~ - \Pi_{\1,K} \big( (K^*K)^{-1} \big( 2 \, \operatorname{skew} (D(K^*Kv))[v] \big) \big) .
	\end{equation}
\end{thm}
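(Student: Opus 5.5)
The plan is to derive a transport law for the ``momentum'' $u := K^*K v$ in the form of a Lie derivative of a one-form, and then map it back to $v$ with $(K^*K)^{-1}$ and $\Pi_{\1,K}$. Gradient terms will be discarded using the weighted Helmholtz decomposition (Theorem \ref{weighted_helmholtz_thm}) with $\mu=\1$. Throughout I work formally: I assume enough smoothness to differentiate under the projections. I also assume that $K$, and hence $\Pi_{\1,K}$ and $(K^*K)^{-1}$, are time-independent, so that they commute with $\partial_t$.

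\textbf{Step 1 (evolution of $m := \rho\nabla\lambda$).} By \eqref{gen_k_dynamics_1}--\eqref{gen_k_dynamics_2}, both $\rho$ and $\lambda$ are advected: $\partial_t\rho = -v\cdot\nabla\rho$ and $\partial_t\lambda = -v\cdot\nabla\lambda$. Applying the product rule and $\nabla(v\cdot\nabla\lambda) = (D\nabla\lambda)v + (Dv)^T\nabla\lambda$ gives
\begin{equation*}
\partial_t m ~=~ -(v\cdot\nabla\rho)\nabla\lambda - \rho(D\nabla\lambda)v - \rho(Dv)^T\nabla\lambda ~=~ -(Dm)v - (Dv)^T m .
\end{equation*}
This is the statement that $m$ is Lie-transported as a one-form.

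\textbf{Step 2 (pass to $u = K^*Kv$).} By \eqref{necc_cond_v_eqn}, $u = m + \nabla\gamma$. For the gradient part, the identity $(D\nabla\gamma)v + (Dv)^T\nabla\gamma = \nabla(v\cdot\nabla\gamma)$ shows that $-(D\nabla\gamma)v - (Dv)^T\nabla\gamma$ is itself a gradient. Adding this to Step 1 gives
\begin{equation*}
\partial_t u = -(Du)v - (Dv)^T u + \nabla q
\end{equation*}
for some scalar $q$ that absorbs $\partial_t\gamma$ and the gradient just produced. Next I use $(Dv)^T u = \nabla(u\cdot v) - (Du)^T v$ to rewrite this as
\begin{equation*}
\partial_t u ~=~ -\big((Du) - (Du)^T\big)v + \nabla p ~=~ -2\,\operatorname{skew}(Du)[v] + \nabla p ,
\end{equation*}
with $\operatorname{skew}(A) := \tfrac12(A - A^T)$ and a new scalar $p$.

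\textbf{Step 3 (project).} Since $v_t$ is divergence-free for every $t$, so is $\partial_t v$, hence $\partial_t v = \Pi_{\1,K}\partial_t v$. Using time-independence of the operators,
\begin{equation*}
\partial_t v = \Pi_{\1,K}(K^*K)^{-1}\partial_t u .
\end{equation*}
The gradient contribution vanishes: $(K^*K)^{-1}\nabla p$ lies in $\Im(\Sigma_{\1,K})$, so $\Pi_{\1,K}$ annihilates it by \eqref{projection_multiplicative_eqn}. This is exactly the argument used to derive \eqref{compact_v_eqn}. What remains is the claimed equation.

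I expect the main obstacle to be the bookkeeping in Step 2. One must check that every term not of the form $\operatorname{skew}(Du)[v]$ is an exact gradient, in particular the $\partial_t\gamma$ and $\nabla(u\cdot v)$ pieces. One must also check that $p$ is regular enough to lie in the space $\H$, so that the Helmholtz decomposition legitimately removes it. The sign and transpose convention for $\operatorname{skew}$ should be double-checked against the classical $K=I$ case. There the result should reduce to Euler in vorticity form, $\partial_t v = -\Pi(\omega\times v)$ in three dimensions, since $2\operatorname{skew}(Dv)[v] = \omega\times v$.
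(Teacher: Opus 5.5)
Your proof is correct, and it takes a genuinely different route from the paper's.

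The paper differentiates the projected formula $v = \Pi_{\1,K}(K^*K)^{-1}(\rho\nabla\lambda)$ directly, so the multiplier $\gamma$ is never differentiated in time. It then uses the single ``integration-by-parts'' identity $\Pi_{\1,K}(K^*K)^{-1}[f\nabla g] = -\Pi_{\1,K}(K^*K)^{-1}[g\nabla f]$, with $f=\rho$ and $g=\nabla\lambda\cdot v$. This produces the antisymmetric combination $(\nabla\rho\cdot v)\nabla\lambda - (\nabla\lambda\cdot v)\nabla\rho = 2\operatorname{skew}(\nabla\lambda\otimes\nabla\rho)[v]$. Only at the end does the paper invoke $D(\rho\nabla\lambda+\nabla\gamma) = \nabla\lambda\otimes\nabla\rho + (\text{symmetric})$ to rewrite this as $2\operatorname{skew}(D(K^*Kv))[v]$.

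You instead show that $\rho\nabla\lambda$ is Lie-transported as a one-form. You then extend this to the full momentum $u = K^*Kv$ modulo exact forms, and convert with Cartan's formula $(Du)v + (Dv)^\top u = 2\operatorname{skew}(Du)[v] + \nabla(u\cdot v)$.

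What your route buys:
\begin{itemize}
\item It yields, as an intermediate result, the momentum form $\partial_t u + (v\cdot\nabla)u + (Dv)^\top u = \nabla q$. The paper only states this afterward in a remark.
\item It therefore exposes the Euler--Poincar\'e/coadjoint structure directly.
\item The Clebsch form of the momentum is used only in your Step 1.
\end{itemize}

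What it costs: you must differentiate $\gamma$ in time and discard the larger gradient $\nabla p$ with $p = \partial_t\gamma + v\cdot\nabla\gamma - u\cdot v$. The paper only needs $\Pi_{\1,K}(K^*K)^{-1}$ to annihilate $\nabla\big(\rho\,(\nabla\lambda\cdot v)\big)$. At the paper's formal level both are acceptable. Moreover, $\gamma$ inherits time-regularity from $\rho\nabla\lambda$, since the Helmholtz decomposition determines it linearly and continuously.
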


\begin{proof}
	Differentiate $v = \Pi_{\1,K} \big( (K^*K)^{-1} (\rho \nabla \lambda) \big)$ with respect to time to obtain
	\begin{align}
		\partial_t v ~&=~ \Pi_{\1,K} (K^*K)^{-1} \left[ (\partial_t \rho) \nabla \lambda + \rho \nabla (\partial_t \lambda) \right] \\
		~&=~ -\Pi_{\1,K} (K^*K)^{-1} \left[ (\nabla \rho \cdot v) \nabla \lambda + \rho \nabla (\nabla \lambda \cdot v) \right] .
	\end{align}
	Since $\Pi_{\1,K}$ annihilates any terms of the form $(K^*K)^{-1}(\nabla \gamma)$, the operator $\Pi_{\1,K} (K^*K)^{-1}$ annihilates pure gradients, and we obtain the integration-by-parts-like identity
	\begin{equation}
		\Pi_{\1,K} (K^*K)^{-1} \left[ \nabla (fg) \right] ~=~ \Pi_{\1,K} (K^*K)^{-1} \left[ f \nabla g + g \nabla f \right] ~=~ 0 ,
	\end{equation}
	and hence $\Pi_{\1,K} (K^*K)^{-1} [ f \nabla g ] = \Pi_{\1,K} (K^*K)^{-1} [ - g \nabla f ]$. Using this to move the derivative from $\nabla \lambda \cdot v$ onto $\rho$ in the right term, and then applying the tensor identities,
	\begin{align}
		\partial_t v ~&=~ -\Pi_{\1,K} (K^*K)^{-1} \left[ (\nabla \rho \cdot v) \nabla \lambda - \nabla \rho (\nabla \lambda \cdot v) \right] \\
		~&=~ -\Pi_{\1,K} (K^*K)^{-1} \left[ (\nabla \lambda \otimes \nabla \rho) [v] - (\nabla \rho \otimes \nabla \lambda) [v] \right] \\
		~&=~ -\Pi_{\1,K} (K^*K)^{-1} \left[ (\nabla \lambda \otimes \nabla \rho - \nabla \rho \otimes \nabla \lambda) [v]  \right] \\
		~&=~ -\Pi_{\1,K} (K^*K)^{-1} \left[ 2 \, \operatorname{skew} (\nabla \lambda \otimes \nabla \rho) [v]  \right] .
	\end{align}
	Now, starting from the expression $K^*Kv = \rho \nabla \lambda + \nabla \gamma$, differentiate in space
	\begin{equation}
		D(K^*Kv) ~=~ D(\rho \nabla \lambda + \nabla \gamma) ~=~ \nabla \lambda \otimes \nabla \rho + \rho \nabla^2 \lambda + \nabla^2 \gamma ,
	\end{equation}
	and observe that $\operatorname{skew}(\nabla \lambda \otimes \nabla \rho ) = \operatorname{skew}(D(K^*Kv))$, since the latter two terms are symmetric. Substitute this into the above expression for $\partial_t v$ to obtain
	\begin{equation}
		\partial_t v ~=~ -\Pi_{\1,K} \big( (K^*K)^{-1} \big( 2 \, \operatorname{skew} (D(K^*Kv))[v] \big) \big) . \qedhere
	\end{equation}
\end{proof}

Thus, we can equivalently write the coupled system \eqref{gen_k_dynamics_1}-\eqref{gen_k_dynamics_2} as the one-way driven system
\begin{align}
    \partial_t \rho ~&=~ - \nabla \rho \cdot v \\
    \partial_t v ~&=~ - \Pi_{\1,K} \big( (K^*K)^{-1} \big( 2 \, \operatorname{skew} (D(K^*Kv))[v] \big) \big) . \label{v_dynamics_eqn}
\end{align}
The interpretation of the system \eqref{gen_k_dynamics_1}-\eqref{gen_k_dynamics_2} as geodesic equations in the Riemannian setting has a counterpart here as well -- the equation \eqref{v_dynamics_eqn} governing the evolution of $v$ may be interpreted as a geodesic equation in the space of flow maps. It is a classic result of Riemannian geometry that \emph{horizontal} geodesics on this larger space push down to geodesics in the quotient under the Riemannian submersion. We will say more about this in Section \ref{lagrangian_lift_sec}.

\begin{rmk}
	In terms of the \emph{momentum} $\mathfrak{m} := K^*K v$, the skew term may be expanded as $2 \, \operatorname{skew}(D\mathfrak{m})[v] = (v \cdot \nabla) \mathfrak{m} + (Dv)^\top[\mathfrak{m}] - \nabla(\mathfrak{m} \cdot v)$, where the pure-gradient term is annihilated by the outer projection. The evolution equation \eqref{v_dynamics_eqn} thus reads
	\begin{equation}
		\partial_t v ~=~ - \Pi_{\1,K} \big( (K^*K)^{-1} \big( (v \cdot \nabla) \mathfrak{m} + (Dv)^\top[\mathfrak{m}] \big) \big) , \qquad \mathfrak{m} = K^*Kv .
	\end{equation}
	Here $K^*K$ acts as an \emph{inertia operator} relating velocity to momentum, and the above expression is the form taken by the Euler-Poincar\'e (EPDiff) geodesic equations on a group of diffeomorphisms. We develop this connection in Section \ref{lagrangian_lift_sec}.
\end{rmk}

For now, we investigate two concrete special cases of the above theory most relevant to applications.

\section{Special Case: Energy-Penalized Flow Fields} \label{example_energy_sec}

We now specialize the general theory of Section \ref{general_k_sec} to the case where $v$ is penalized based on its kinetic energy, i.e., where $\| \cdot \| = \| \cdot \|_{L^2}$, or $K = I$ (so that $\V = L^2$). First, we verify our assumptions on $\V$ and $K$ from the previous sections.

We retain our prior assumptions on $\Omega$ and $\mu \in \{ \rho, \1 \}$: $\Omega$ is a compact, connected, $n$-dimensional Riemannian manifold with Lipschitz (or empty) boundary, and $\rho$ is measurable and uniformly bounded above and below by positive constants: $0 < c \leq \rho \leq C < \infty$.

The space $\H$ of scalar potentials is taken to be (scalar-valued) $H^1$ -- the set of $L^2$ scalar functions with $L^2$ weak gradient -- with either homogeneous Dirichlet boundary conditions (if $\Omega$ has boundary) or the zero-mean condition $\int_\Omega \xi = 0$ (if $\Omega$ is boundaryless). We refer to these spaces collectively as ``homogeneous $H^1$'' or $\dot{H}^1$.

\begin{prop}
    Assumptions \ref{helmholtz_assumptions} hold under the choices: $\V = L^2$, $K=I$, and $\H = \dot{H}^1$.
\end{prop}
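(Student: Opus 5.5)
We verify the items of Assumptions \ref{helmholtz_assumptions} one at a time, with $\mu \in \{\rho,\1\}$ satisfying $0<c\le \mu\le C$.

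The structural items are immediate. The assumption on $\Omega$ is retained from the setup. $\V = L^2$ is a Hilbert space in the norm $\| I \cdot\|_{L^2} = \|\cdot\|_{L^2}$. $K = I$ is injective, everywhere (hence densely) defined, and has closed range $L^2$. In particular $K^*K = I$, so $(K^*K)^{-1} = I$. The bilinear form in \eqref{helmholtz_weak_form_eqn} therefore reduces to
\[
a(\psi,\xi) = \int_\Omega \mu^2 \, \nabla\psi\cdot\nabla\xi ,
\]
and the functional reduces to $f(\psi) = \int_\Omega \mu\, \nabla\psi \cdot v$.

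Next we equip $\H = \dot H^1$ with the norm $\|\xi\|_{\H} := \|\nabla \xi\|_{L^2}$. This is the main point requiring care: it must be a genuine norm equivalent to the full $H^1$ norm, so that $\H$ is complete.
\begin{itemize}
\item In the Dirichlet case ($\partial\Omega\neq\emptyset$), this follows from the Poincar\'e inequality $\|\xi\|_{L^2}\lesssim\|\nabla\xi\|_{L^2}$ for $\xi\in H^1_0(\Omega)$. This uses that $\Omega$ is compact (bounded).
\item In the boundaryless case, it follows from the Poincar\'e--Wirtinger inequality $\|\xi - \bar\xi\|_{L^2}\lesssim \|\nabla \xi\|_{L^2}$ on the zero-mean subspace. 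This uses compactness and connectedness of $\Omega$; connectedness is what rules out locally constant functions with zero gradient.
\end{itemize}
Completeness then holds because $H^1_0$ is closed in $H^1$, and the zero-mean subspace is closed as the kernel of a bounded linear functional.

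With this norm in hand, the three Lax--Milgram hypotheses follow from the bounds on $\mu$ and Cauchy--Schwarz.
\begin{enumerate}[label=\roman*)]
\item Boundedness: $|a(\psi,\xi)|\le C^2\|\nabla\psi\|_{L^2}\|\nabla\xi\|_{L^2}$.
\item Coercivity: $a(\xi,\xi)\ge c^2\|\nabla\xi\|_{L^2}^2 = c^2\|\xi\|_{\H}^2$.
\item Boundedness of the functional: $|f(\psi)|\le C\|v\|_{L^2}\|\nabla\psi\|_{L^2} = C\|v\|_\V\|\psi\|_\H$.
\end{enumerate}
Measurability of $\mu$ ensures that all of these integrals are defined.

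Finally, the integration by parts underlying the distributional interpretation needs the boundary term in \eqref{div_thm_eq} to vanish. Here no trace of the merely-$L^2$ field $v$ is needed. The identity $\langle \mu\nabla\psi, v\rangle = \langle \psi, -\nabla\cdot(\mu v)\rangle$ is taken as the definition of the distributional divergence in duality with $\H$. This is consistent because test functions in $\H$ vanish on $\partial\Omega$ in the Dirichlet case, or because $\partial\Omega=\emptyset$ in the boundaryless case. Once the Poincar\'e step is settled, the rest of the verification is routine.
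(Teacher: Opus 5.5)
Your proposal is correct and follows the same route as the paper. The structural items are trivial for $K=I$, and the Poincar\'e inequality on $\dot{H}^1$ makes $\|\nabla \cdot\|_{L^2}$ an equivalent norm; the three Lax--Milgram bounds then follow from Cauchy--Schwarz and $0<c\le\mu\le C$. You add some detail the paper leaves implicit: the explicit constants $C^2$, $c^2$, $C$, the separate Dirichlet and Poincar\'e--Wirtinger cases, and the completeness of $\H$.
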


\begin{proof}
    Since $\V = L^2$ and $K=I$, several of these assumptions are trivial: $\V$ is a Hilbert space, and $K$ is injective, densely defined, and has closed range. All that remains is to verify the hypotheses of Lax-Milgram for the choice of $\H$.

    Whether $\Omega$ has boundary or not, the choice of $\dot{H}^1$ ensures that the Poincar\'e inequality $\| \xi \|_{L^2} \lesssim \| \nabla \xi \|_{L^2}$ holds, implying that the norms $\| \cdot \|_{H^1} := (\| \cdot \|_{L^2}^2 + \| \nabla \cdot \|_{L^2}^2)^{1/2}$ and $\| \cdot \|_{\dot{H}^1} := \| \nabla \cdot \|_{L^2}$ are equivalent. The hypotheses of Lax-Milgram are thus verified for $\mu \in \{ \rho , \1 \}$:
    \begin{align}
        | a(\psi,\xi)| ~&=~ | \lb \mu \nabla \psi , \mu \nabla \xi \rb_{L^2} | ~\leq~ \| \mu \nabla \psi \|_{L^2} \| \mu \nabla \xi \|_{L^2} ~\lesssim~ \| \psi \|_{\dot{H}^1} \| \xi \|_{\dot{H}^1} ; \\
        a(\xi, \xi) ~&=~ \| \mu \nabla \xi \|_{L^2}^2 ~\gtrsim~ \| \xi \|_{\dot{H}^1}^2 ;\\
        | f(\psi) | ~&=~ | \lb \mu \nabla \psi , v \rb_{L^2} | ~\leq~ \| \mu \nabla \psi \|_{L^2} \| v \|_{L^2} ~\lesssim~ \| v \|_{L^2} \| \psi \|_{\dot{H}^1} ,
    \end{align}
    where the bounds marked $\lesssim$ use only the uniform bounds $0 < c \leq \mu \leq C < \infty$. In particular, hypothesis (iii) identifies $\nabla \cdot (\mu \ph)$ as continuous from $L^2$ into $\dot{H}^{-1}$, the dual of $\dot{H}^1$.
\end{proof}

With these assumptions verified, we now proceed to specialize the necessary conditions from the previous section. The weighted Helmholtz projection $\Pi = \Pi_{\1,I}$ reduces to the classical \emph{Leray} projection in this setting. Since $K^*K = I$, the necessary conditions for the velocity field selection problem (Theorem \ref{vel_sel_necc_cond_thm}) reduce to
\begin{equation} \label{opt_v_eqn}
	v ~=~ \rho \nabla \lambda + \nabla \gamma ,
\end{equation}
and the compact form \eqref{compact_v_eqn} becomes simply $v = \Pi(\rho \nabla \lambda)$. Likewise, the Leray projection admits the familiar explicit form
\begin{equation}
	\Pi ~=~ I - \nabla (\nabla \cdot \nabla)^{-1} \nabla \cdot ~=~ I - \nabla \Delta^{-1} \nabla \cdot .
\end{equation}

Specializing the necessary conditions for the optimal state-transfer problem (Theorem \ref{state_transfer_necc_conds_thm}) to $K = I$ gives, after substituting $v = \Pi(\rho \nabla \lambda)$,
\begin{align}
	\partial_t \rho ~&=~ - \nabla \rho \cdot \Pi (\rho \nabla \lambda) , \label{necc_cond_eqn_p} \\
	\partial_t \lambda ~&=~ - \nabla \lambda \cdot \Pi (\rho \nabla \lambda) . \label{necc_cond_eqn_l}
\end{align}
As before, $\rho$ and $\lambda$ are transported together by $v = \Pi(\rho \nabla \lambda)$, and the evolution of $v$ is governed by a single equation. Specializing the general evolution equation \eqref{v_dynamics_eqn} to this case, the momentum $\mathfrak{m} = K^*K v = v$, and the skew term reduces via the identity
\begin{equation}
	2 \, \text{skew}(Dv)[v] ~=~ Dv[v] - (Dv)^\top[v] ~=~ Dv[v] - \tfrac{1}{2} \nabla \| v \|^2 ,
\end{equation}
since the pure-gradient term is annihilated by $\Pi$, to yield
\begin{equation} \label{euler_eqn}
	\partial_t v ~=~ - \Pi \big( Dv[v] \big) .
\end{equation}
That is, the velocity field evolves according to the \emph{incompressible Euler equation}.

This is a remarkable result: the geodesic equation for the reachable set $(\Reach, m)$ under the kinetic energy penalty is the continuity equation driven by a velocity field obeying the incompressible Euler equation. The interpretation of the incompressible Euler equation as a geodesic equation was first described by Arnold in the flow map setting (i.e., in the space $\sdiffomega$) in his 1966 seminal paper \citep{Arnold1966}. Here, we instead characterize this equation as a geodesic equation in the space of \emph{densities}. These two perspectives connect via the Riemannian submersion discussed in Section \ref{lagrangian_lift_sec}.

It is also interesting to point out that the system \eqref{necc_cond_eqn_p}-\eqref{necc_cond_eqn_l} provides a \emph{factoring} of the incompressible Euler equation into a coupled system of scalar-valued PDEs. In fact, this factoring is precisely the classical \emph{Clebsch representation} of an incompressible flow \citep[see, e.g.,][]{marsden1983coadjoint}, in which the velocity field $v$ is written in the form
\begin{equation}
    v ~=~ \nabla \gamma + \rho \nabla \lambda ,
\end{equation}
where, as in \eqref{necc_cond_eqn_p}-\eqref{necc_cond_eqn_l}, both potentials $\rho$ and $\lambda$ are advected by $v$, and $\gamma$ is fixed by incompressibility. Notably, the Clebsch potentials are not \emph{posited} here but \emph{derived}. In the classical setting, the potentials $\gamma, ~\rho, ~\lambda$ are introduced as an ansatz; here, they emerge naturally out of the structure of the optimal control problem, appearing as the incompressibility multiplier $\gamma$, transported density $\rho$, and costate $\lambda$.

We also point out a close connection between this geodesic equation and the classical theory of optimal mass transport: while in our setting, geodesics are governed by the \emph{incompressible} Euler equation, in the standard picture of optimal transport, geodesics are instead governed by the \emph{pressureless} Euler equation \citep[see, e.g.,][]{Benamou2000}:
\begin{equation} \label{pressureless_euler_eqn}
	\partial_t v ~=~ - D v [v] .
\end{equation}
The two differ precisely by the Leray projection $\Pi$, which enforces incompressibility.

Now, while we find these connections to be fascinating, they also reveal that the kinetic energy penalty is an unnatural choice for mixing effort, for two reasons. First, the incompressible Euler equation is notoriously poorly behaved: one expects \citep[following][]{Brenier1989} that optimal solutions to our mixing problem do not even exist in the $L^2$ case, and that the problem must be relaxed into the space of \emph{generalized flows} (i.e., Young measures). Since these are nonphysical (and extremely irregular) solutions, there is no hope of building a machine to implement -- or even approximate -- them. Second, the incompressible Euler equation is a conservative\footnote{We neglect here certain weak solutions to the incompressible Euler equation which exhibit so-called ``anomalous energy dissipation''.} physical law, meaning that the optimal mixing protocol in this case dumps all of its energy into the fluid at time $t = 0$ and does not interfere with it thereafter. This is certainly not how we expect real-world mixing machines to operate.

The ill-posedness of this problem under the kinetic energy penalty is itself informative. Because we consider optimal mixing fields independently of any parameterization, the behavior of the resulting problems tells us something about how the objective should be designed. Here, it tells us that the kinetic energy penalty is an unnatural measure of mixing effort -- something that would not have been apparent had we worked with finitely-parameterized velocity fields. We expect the enstrophy-penalized ($K = D$) case to be much better behaved, and we investigate it next in Section~\ref{example_enstrophy_sec}.

\section{Special Case: Enstrophy-Penalized Flow Fields} \label{example_enstrophy_sec}

We now specialize the general theory of Section \ref{general_k_sec} to the case where $v$ is penalized based on its enstrophy\footnote{In fluid mechanics, the term ``enstrophy'' traditionally refers to the (squared) $L^2$ norm of the vorticity, $\| \nabla \times v \|_{L^2}^2$. For incompressible velocity fields on a flat domain satisfying the no-slip condition $v = 0$ on $\partial \Omega$ (if $\Omega$ has boundary), the enstrophy $\| \nabla \times v \|_{L^2}^2$, viscous dissipation energy $2 \| S \|_{L^2}^2$ (with $S$ the symmetric rate-of-strain), and the (squared) $L^2$ norm of the Jacobian $\| D v \|_{L^2}^2$ all coincide. On a curved Riemannian manifold, these identities require more care.}, i.e., where $\| \cdot \| = \| D \cdot \|_{L^2}$ or $K = D$.

In comparison to the energy-penalized case investigated earlier, we expect this case to be much better behaved. First, optimal solutions should have some degree of regularity\footnote{Indeed, the setting that we develop here falls within the theory of regular Lagrangian flows \citep[see, e.g.,][]{ambrosio2014continuity}.}, since we penalize them based on their derivative\footnote{It is also interesting to note here that this penalty coincides with the spatial component of Brockett's ``attention functional'' \citep{Brockett1997} -- a proxy for the spatial resolution required by the sensing/actuation apparatus in order to implement such a control.}. Thus, it is more hopeful that physical machines can be built to approximate these flows. Second, the enstrophy norm is a much better proxy for mixing effort: it is proportional to the power dissipated by viscosity, which we expect to be constant. This suggests that optimal mixing in this norm occurs with constant power input, which is much more aligned with how we expect real-world mixing to work.

We begin by describing the objects which appear in this setting, and then verify our assumptions on $\V$ and $K$ from the previous sections. First, $K = D$ is the derivative operator acting on vector fields, so $Kv = Dv$ is a matrix field -- the Jacobian field of $v$. The $L^2$ (Frobenius) inner product on matrix fields is written in any of the equivalent forms $\lb M , N \rb_{L^2}$, $\int_\Omega M \! : \! N$, or $\int_\Omega \tr(M^\top N)$. The space $\V$ must then consist of vector fields with $L^2$ weak derivative, i.e., (vector-valued) $H^1$.

Recall from earlier that $K^*$ is the $L^2$-adjoint of $K$, which is given in this setting by the (negative) \emph{matrix divergence} $D^*$. In index notation,
\begin{equation}
	(D^* M)_i ~=~ - \text{div}(M)_i ~=~ - \sum_j \partial_j M_{ij} .
\end{equation}
The operator $K^*K = D^*D$ is then the \emph{vector Laplacian}\footnote{Note that the operator $\Delta$ as usually defined is negative definite, while $-\Delta = D^*D$ is positive definite. Our convention is thus to work with $-\Delta$, which is the more natural operator here.} $-\Delta$, which acts on $v$ as a component-wise scalar Laplacian. Its inverse $(-\Delta)^{-1}$ is exactly the solution operator of the vector Poisson equation:
\begin{equation}
	u ~=~ (-\Delta)^{-1} f \qquad \Leftrightarrow \qquad -\Delta u = f
\end{equation}
(and numerically, such inverses are typically computed this way).

Unlike the energy-penalized case -- where $v \in L^2$ does not have well-defined boundary values -- $v \in H^1$ does, and it is natural to impose a no-slip boundary condition $v = 0$ on $\partial \Omega$ when $\Omega$ has nonempty boundary. When $\Omega$ is boundaryless, we instead quotient out by the kernel of $D$ (i.e., the parallel vector fields). On the flat $n$-torus, for example, the parallel vector fields are exactly the constant vector fields, and this quotient is equivalent to fixing $v$ to be zero-mean $\int_\Omega v = 0$ in standard coordinates. In either case, we denote the resulting space $\dot{H}^1$, for homogeneous (vector valued) $H^1$, and write $\dot{H}^{-1}$ for its dual.

We again retain our prior assumptions on $\Omega$ and $\mu \in \{ \rho, \1 \}$: $\Omega$ is a compact, connected, $n$-dimensional Riemannian manifold with Lipschitz (or empty) boundary, and $\rho$ is measurable and uniformly bounded above and below by positive constants: $0 < c \leq \rho \leq C < \infty$. We also make the additional assumption here that $\rho$ (and thus $\mu \in \{ \rho, \1\}$) is Lipschitz continuous.

The space $\H$ of scalar potentials is taken to be $L^2_0$, the space of $L^2$ scalar fields satisfying the zero-mean condition $\int_\Omega \xi = 0$. With all of these objects in place, we may now verify our assumptions.

\begin{prop} \label{enstrophy_assumptions_prop}
    Assumptions \ref{helmholtz_assumptions} hold under the choices: $\V = \dot{H}^1$, $K=D$, and $\H = L^2_0$.
\end{prop}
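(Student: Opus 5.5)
The plan is to check the items of Assumptions \ref{helmholtz_assumptions} in order. The first items are functional-analytic bookkeeping. The real content is the Lax--Milgram hypotheses (i)--(iii) for $\H = L^2_0$, where the weak-form pairings must be read as $\dot{H}^{-1}$--$\dot{H}^1$ dualities.

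\textbf{Assumptions on $\V$ and $K$.} First, $\V = \dot{H}^1$ with norm $\| D \cdot \|_{L^2}$ is a Hilbert space. In the no-slip case this follows from the Poincar\'e inequality $\| v \|_{L^2} \lesssim \| Dv \|_{L^2}$ on $H^1_0$, which makes $\| D\cdot\|_{L^2}$ equivalent to the full $H^1$ norm. In the boundaryless case $\| D \cdot \|_{L^2}$ is a genuine norm on the quotient by $\ker D$, and the quotient of a Hilbert space by a closed (here finite-dimensional) subspace is complete. Next, $K = D$ is injective on $\V$ by construction, since we have either imposed $v|_{\partial\Omega}=0$ or quotiented out $\ker D$. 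It is densely defined because smooth compactly supported fields are dense in $L^2$. Its range is closed because $D$ is an isometry from the complete space $(\V, \|D\cdot\|_{L^2})$ onto its range. Consequently $K^*K = -\Delta$ is an isomorphism $\dot H^1 \to \dot H^{-1}$, with inverse $(-\Delta)^{-1}$ satisfying $\lb f , (-\Delta)^{-1} f \rb = \| f \|_{\dot H^{-1}}^2$.

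\textbf{Making sense of the forms and proving (i) and (iii).} For $\xi \in L^2_0$, the object $\mu \nabla \xi$ is only a distribution. The Lipschitz assumption on $\mu$ is what places it in $\dot{H}^{-1}$. I will define it by duality, $\lb \mu \nabla \xi , w \rb := -\int_\Omega \xi \, \nabla \cdot (\mu w)$ for $w \in \dot H^1$. Since $\nabla\cdot(\mu w) = \mu \nabla \cdot w + \nabla\mu \cdot w$ with $\nabla \mu \in L^\infty$, Poincar\'e gives $\| \nabla\cdot(\mu w)\|_{L^2} \lesssim \|Dw\|_{L^2}$. In the boundaryless case constants pair to zero with $\nabla\xi$, so the quotient causes no trouble. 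Hence $\| \mu \nabla \xi \|_{\dot H^{-1}} \lesssim \|\xi\|_{L^2}$. Then:
\begin{itemize}
\item Hypothesis (iii) is exactly $|f(\psi)| = |\lb \psi, \nabla\cdot(\mu v)\rb_{L^2}| \lesssim \|\psi\|_{L^2}\|v\|_{\dot H^1}$.
\item Hypothesis (i) follows from
\[
a(\psi,\xi) = \lb \mu\nabla\psi, (-\Delta)^{-1}\mu\nabla\xi\rb \le \|\mu\nabla\psi\|_{\dot H^{-1}}\|\mu\nabla\xi\|_{\dot H^{-1}} \lesssim \|\psi\|_{L^2}\|\xi\|_{L^2}.
\]
\end{itemize}

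\textbf{Coercivity (ii), the main obstacle.} We have $a(\xi,\xi) = \|\mu\nabla\xi\|_{\dot H^{-1}}^2$, so we need $\|\mu \nabla \xi\|_{\dot H^{-1}} \gtrsim \|\xi\|_{L^2}$ on $L^2_0$. I will first remove the weight. Given any $u \in \dot H^1$, the field $w := u/\mu$ lies in $\dot H^1$ with $\|w\|_{\dot H^1} \lesssim \|u\|_{\dot H^1}$, using that $\mu$ is Lipschitz and bounded below, together with Poincar\'e. Then
\[
\lb \mu \nabla\xi , w \rb = \lb \nabla \xi, u\rb,
\]
and taking the supremum over $u$ gives $\|\mu\nabla\xi\|_{\dot H^{-1}} \gtrsim \|\nabla\xi\|_{\dot H^{-1}}$. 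The remaining inequality $\|\xi\|_{L^2} \lesssim \|\nabla \xi\|_{\dot H^{-1}}$ for zero-mean $\xi$ is the Ne\v{c}as inequality. It is equivalent to the surjectivity of $\nabla\cdot : H^1_0 \to L^2_0$, i.e.\ the Bogovski\u{\i} / inf-sup (LBB) estimate. This holds on bounded connected Lipschitz domains, and on compact boundaryless manifolds it follows from elliptic theory by taking $w = \nabla(-\Delta)^{-1}\xi$. I will invoke it as a known result rather than reprove it. This is the one genuinely nontrivial analytic input, and the place where connectedness and the Lipschitz regularity of $\partial\Omega$ are used.
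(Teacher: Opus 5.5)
Your proof is correct and follows the same skeleton as the paper's. The items on $\V$ and $K$ come from removing the kernel of $D$ plus Poincar\'e, and (i), (iii) come from $\dot{H}^{-1}$--$\dot{H}^{1}$ duality using $\|\nabla\cdot(\mu w)\|_{L^2}\lesssim\|Dw\|_{L^2}$.

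The real difference is in coercivity (ii). The paper gets it from continuity of multiplication by $\mu$ on $\dot{H}^{-1}$ together with the asserted identity $\|\nabla f\|_{\dot{H}^{-1}}=\|f\|_{L^2}$. That identity is exact on the flat torus, where $\nabla$ commutes with $\Delta$. It is not exact on a bounded Lipschitz domain with no-slip conditions. There only $\|\nabla f\|_{\dot{H}^{-1}}\le\|f\|_{L^2}$ holds in general, and the reverse bound, with a domain-dependent constant, is exactly the Ne\v{c}as/inf-sup estimate you invoke. So your route makes explicit the one nontrivial analytic input that the paper's proof uses silently. Your substitution $w=u/\mu$ is also the concrete form of boundedness of multiplication by $\mu^{-1}$ on $\dot{H}^{-1}$, which is what the lower bound actually requires.

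One caveat, which the paper shares and covers only with its footnote, concerns the boundaryless case. Constants pair to zero with $\nabla\xi$ but not with $\mu\nabla\xi$, since $-\int_\Omega\xi\,\nabla\cdot(\mu c)=-\int_\Omega\xi\,\nabla\mu\cdot c$ need not vanish for a parallel field $c$. You therefore have to fix a normalization of $\dot{H}^1$, e.g.\ zero-mean fields on the torus. That is harmless for (i) and (iii), but it breaks the weight-removal step in (ii). The field $w=u/\mu$ leaves the normalized subspace, and subtracting its mean $\overline{w}$ produces the term $\int_\Omega\xi\,\nabla\mu\cdot\overline{w}$, which is of the same order as what you are bounding.

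What you actually obtain is a G\aa rding-type estimate with a finite-rank remainder, e.g.\ $\|\xi\|_{L^2}\lesssim\|\mu\nabla\xi\|_{\dot{H}^{-1}}+\bigl|\int_\Omega\xi\,\nabla\mu\bigr|$ on the torus. Coercivity then follows from injectivity of $a$ plus the standard compactness argument. For injectivity: if $a(\xi,\xi)=0$ then $\nabla\xi=c/\mu$ with $c$ constant, and pairing with $c$ gives $|c|^2\int_\Omega\mu^{-1}=0$, so $c=0$ and $\xi=0$.

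Relatedly, completeness of $\dot{H}^1$ in the quotient case needs the Poincar\'e equivalence of $\|D\cdot\|_{L^2}$ with the quotient norm, not merely the fact that it is a norm.
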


\begin{proof}
    Removing the kernel of $K$ (either by boundary condition or quotient) makes $K$ injective. $K$ is also densely defined since $\dot{H}^1$ is dense\footnote{In the boundaryless case, we need to work in the ambient space $L^2 / \Nu(D)$ rather than all of $L^2$, but this does not affect the resulting treatment -- the load-bearing conclusions still follow and lead to identical formulas.} in $L^2$. Removal of the kernel also means that the vector-valued Poincar\'e inequality holds, implying that the norms $\| \cdot \|_{H^1} := (\| \cdot \|_{L^2}^2 + \| D \cdot \|_{L^2}^2)^{1/2}$ and $\| \cdot \|_{\dot{H}^1} := \| D \cdot \|_{L^2}$ are equivalent, and making $\V = \dot{H}^1$ a Hilbert space. Since $\V$ is complete and $K: \dot{H}^1 \to L^2$ is an isometry (by definition of $\| \cdot \|_{\dot{H}^1}$), $K$ has closed range.

    The Lipschitz assumption on $\mu$ ensures that multiplication by $\mu$ is a continuous operation from $\dot{H}^1 \to \dot{H}^1$ and from $\dot{H}^{-1} \to \dot{H}^{-1}$ (as well as from $L^2 \to L^2$). This allows us to verify the hypotheses of Lax-Milgram as follows:
    \begin{align}
        | a(\psi,\xi) | ~&=~ |\lb \mu \nabla \psi , (-\Delta)^{-1} \mu \nabla \xi \rb_{L^2}| ~=~ | \lb \mu \nabla \psi , \mu \nabla \xi \rb_{\dot{H}^{-1}} | \nonumber \\
        &\qquad \qquad ~\leq~ \| \mu \nabla \psi \|_{\dot{H}^{-1}} \| \mu \nabla \xi \|_{\dot{H}^{-1}} ~\lesssim~ \| \psi \|_{L^2} \| \xi \|_{L^2} ; \\
        a(\xi, \xi) ~&=~ \lb \mu \nabla \xi , (-\Delta)^{-1} \mu \nabla \xi \rb_{L^2} ~=~ \| \mu \nabla \xi \|_{\dot{H}^{-1}}^2 ~\gtrsim~ \| \xi \|_{L^2}^2 ; \\
        | f(\psi) | ~&=~ | \lb \mu \nabla \psi , v \rb_{L^2} | ~=~ | \lb \psi , \nabla \cdot (\mu v) \rb_{L^2} | \nonumber \\
        & \qquad \qquad ~\leq~ \| \nabla \cdot (\mu v) \|_{L^2} \| \psi \|_{L^2} ~\lesssim~ \| v \|_{\dot{H}^1} \| \psi \|_{L^2} ,
    \end{align}
    where we have used the identities $\lb f , (-\Delta)^{-1} g \rb_{L^2} = \lb f , g \rb_{\dot{H}^{-1}}$ and $\| \nabla f \|_{\dot{H}^{-1}} = \| f \|_{L^2}$. In particular, hypothesis (iii) identifies $\nabla \cdot (\mu \ph)$ as continuous from $\dot{H}^1$ into $L^2_0$.
\end{proof}

With these assumptions verified, we now specialize the necessary conditions from the previous section. As in the energy-penalized case, we first record the form taken by the Helmholtz projection $\Pi = \Pi_{\1,D}$. From Section \ref{general_k_sec}, this projection admits the explicit form
\begin{equation}
	\Pi_{\1,D} ~=~ I - (-\Delta)^{-1} \nabla \big( \nabla \cdot (-\Delta)^{-1} \nabla \big)^{-1} \nabla \cdot .
\end{equation}
Under certain circumstances, this expression simplifies. Recall the vector calculus identity $\Delta v = \nabla (\nabla \cdot v) - \nabla \times (\nabla \times v)$. If $\nabla \times (\nabla \times \nabla f) = 0$ for all scalar $f$ (i.e., if the relevant de Rham complex is exact), then the Laplacian and gradient commute:
\begin{equation}
	\Delta \nabla f ~=~ \nabla (\nabla \cdot \nabla f) - \nabla \times (\nabla \times \nabla f) ~=~ \nabla \Delta f .
\end{equation}
This holds, for example, when $\Omega$ is the flat $n$-torus with the zero-mean condition on $\V$. In this case, since $\nabla$ commutes with $\Delta$ (and hence with $(-\Delta)^{-1}$), the projection simplifies to the standard Leray projection:
\begin{align}
	\Pi_{\1,D} ~&=~ I - (-\Delta)^{-1} \nabla \big( \nabla \cdot (-\Delta)^{-1} \nabla \big)^{-1} \nabla \cdot \\
	~&=~ I -  \nabla (-\Delta)^{-1} \big( \nabla \cdot \nabla (-\Delta)^{-1} \big)^{-1} \nabla \cdot \\
	~&=~ I -  \nabla \Delta^{-1} \nabla \cdot \\
	~&=~ \Pi_{\1,I} .
\end{align}

With the projection in hand, the necessary conditions specialize directly from Theorems \ref{vel_sel_necc_cond_thm} and \ref{state_transfer_necc_conds_thm} with $K^*K = -\Delta$. For the velocity field selection problem, we obtain
\begin{equation}
	v ~=~ (-\Delta)^{-1}(\rho \nabla \lambda + \nabla \gamma) ~=~ \Pi_{\1,D} \big( (-\Delta)^{-1} (\rho \nabla \lambda) \big) ,
\end{equation}
and for the optimal state-transfer problem, the coupled system
\begin{align}
	\partial_t \rho ~&=~ - \nabla \rho \cdot \Pi_{\1,D} \big( (-\Delta)^{-1} (\rho \nabla \lambda) \big) , \label{rho_enstrophy_geodesic_eqn} \\
	\partial_t \lambda ~&=~ - \nabla \lambda \cdot \Pi_{\1,D} \big( (-\Delta)^{-1} (\rho \nabla \lambda) \big) , \label{lambda_enstrophy_geodesic_eqn}
\end{align}
with the corresponding evolution equation for $v$,
\begin{equation} \label{enstrophy_v_dynamics_eqn}
	\partial_t v ~=~ - \Pi_{\1,D} \big( (-\Delta)^{-1} \big( 2 \, \operatorname{skew} (D(-\Delta v))[v] \big) \big) .
\end{equation}

Unlike the energy-penalized case -- where the evolution equation collapsed to the incompressible Euler equation -- this equation does not simplify, even in the case where $\nabla$ and $\Delta$ commute and $\Pi_{\1,D}$ reduces to the standard Leray projection. Following our earlier remark in Section \ref{general_k_sec}, Equation \eqref{enstrophy_v_dynamics_eqn} is the Euler-Poincar\'e (EPDiff) geodesic equation for the right-invariant homogeneous $\dot{H}^1$ metric on $\sdiffomega$, with inertia operator $K^*K = -\Delta$. To the best of our knowledge, its solutions have not been studied, though it represents a limiting case of the \emph{Lagrangian-averaged Euler} (LAE-$\alpha$) family of equations, or alternatively, a degenerate case in the classification of right-invariant Sobolev $H^1$ metrics due to \citet[Corollary~A.5]{khesin2013geometry}. We provide numerical solutions to this equation in the following subsection.

\subsection{Numerical Simulations} \label{enstrophy_numerical_sec} \label{geodesic_simulations_sec}

We solve the geodesic equations \eqref{rho_enstrophy_geodesic_eqn}-\eqref{lambda_enstrophy_geodesic_eqn} on the domain $\Omega = [0,2\pi) \times [0,2\pi)$ with periodic boundary conditions, i.e., on the flat 2-torus $\mathbb{T}^2$. The domain is discretized on an $N \times N$ grid with spacing $\delta x = \delta y = 2\pi/N$, and we write $v = (U, V)$ for the components of the velocity field.

The advection equations $\partial_t u = -\nabla u \cdot v$ for $u \in \{\rho, \lambda\}$ are solved with a first-order upwind finite difference scheme, written in conservative flux form so that discrete mass is preserved exactly. %
Because $v$ varies in both space and time, the upwind direction is chosen adaptively at every grid point and every timestep. The timestep is also chosen adaptively to satisfy the Courant--Friedrichs--Lewy (CFL) condition. Since $\delta x = \delta y$, this takes the form
\begin{equation} \label{cfl_condition_eqn}
    \delta t ~=~ \min\left( \delta t_{\max}, ~ \frac{\text{C} \cdot \delta x}{\max_{i,j} \big( |U_{i,j}| + |V_{i,j}| \big)} \right) .
\end{equation}

Each advection step alternates with an update of the velocity field $v = \Pi [\Delta^{-1}(\rho \nabla \lambda)]$. We first compute $w := \rho \nabla \lambda$, approximating $\nabla \lambda$ with second-order centered differences. Since the kernel of $\Delta$ on $\mathbb{T}^2$ consists of the constant functions, we then remove the mean of $w$:
\begin{equation} \label{zero_mean_center_eqn}
    \tilde{w} ~:=~ w - \frac{1}{|\Omega|} \int_\Omega w \, dx \, dy .
\end{equation}
The inverse vector Laplacian and the Leray projection $\Pi = I - \nabla \Delta^{-1} \nabla$ are diagonal in Fourier space, so both are applied in a single step:
\begin{equation} \label{leray_fourier_eqn}
    \hat{v}(\xi) ~=~ -\frac{1}{|\xi|^2} \left( I - \frac{\xi \xi^{\mathsf{T}}}{|\xi|^2} \right) \hat{\tilde{w}}(\xi), \qquad \xi \in \mathbb{Z}^2 \setminus \{0\} ,
\end{equation}
with $\hat{v}(0) = 0$. Here $\hat{v}$ and $\hat{\tilde{w}}$ are the discrete Fourier transforms of $v$ and $\tilde{w}$ and $\xi = (\xi_1, \xi_2)$ is the integer wavenumber. %
An inverse Fourier transform then returns the divergence-free velocity field on the grid.

Figure~\ref{fig:geodesic_evolution} shows the evolution from the initial conditions
\begin{equation}
    \rho(x,y) ~=~ e^{1.5 \sin(x+y)}, \qquad \lambda(x,y) ~=~ e^{1.5 \cos(x - y + 1.3)} ,
\end{equation}
both of which are smooth and $2\pi$-periodic in each variable. We used $N = 256$ (so $\delta x = 2\pi/256 \approx 0.0245$), $C = 0.9$, and a final time $T = 5$. Over this run the $L^2$ norm of $\rho$ changes by $-5.91\%$; in the continuous problem the latter is conserved so this drift reflects the numerical diffusion of the first-order scheme. Total mass is conserved to machine precision.

To check that this drift is a genuine discretization effect we repeated the same run at $N = 64, 128, 256, 512$, holding $C = 0.9$ and $T = 5$ fixed, and tracked how the $L^2$ change reported above depends on $N$ (Table~\ref{tab:geodesic_resolution}).

\begin{table}[ht]
\centering
\caption{Resolution dependence of the change in $\|\rho\|_{L^2}$ over the geodesic run of Figure~\ref{fig:geodesic_evolution} (CFL number $C = 0.9$, final time $T = 5$). Here $N$ is the number of grid points in each direction and $\delta x = 2\pi/N$ is the grid spacing.}
\label{tab:geodesic_resolution}
\begin{tabular*}{0.61\textwidth}{@{\extracolsep{\fill}} c c c}
\toprule
$N$ & $\delta x$ & Change in $\|\rho\|_{L^2}$ \\
\midrule
64  & $9.82 \times 10^{-2}$ & $-10.03\%$ \\
128 & $4.91 \times 10^{-2}$ & $-7.82\%$ \\
256 & $2.45 \times 10^{-2}$ & $-5.91\%$ \\
512 & $1.23 \times 10^{-2}$ & $-4.31\%$ \\
\bottomrule
\end{tabular*}
\end{table}

The drift shrinks steadily as $N$ increases, which is what we expect from the numerical diffusion of a first-order scheme. It shrinks more slowly than a first-order scheme normally would though: each time $N$ doubles, the drift drops by only about a quarter instead of being cut in half. We attribute this to the filamentation visible in Figure~\ref{fig:geodesic_evolution}: as $\rho$ and $\lambda$ are sheared into progressively thinner bands, the local gradients being advected grow with $N$, which degrades the effective accuracy of the scheme relative to a fixed smooth test problem. The code used to generate these figures is publicly available at \url{https://github.com/johnigraszekcode/Fluid-Mixing-Code}.

\begin{figure}[ht]
    \centering
    \includegraphics[width=\textwidth]{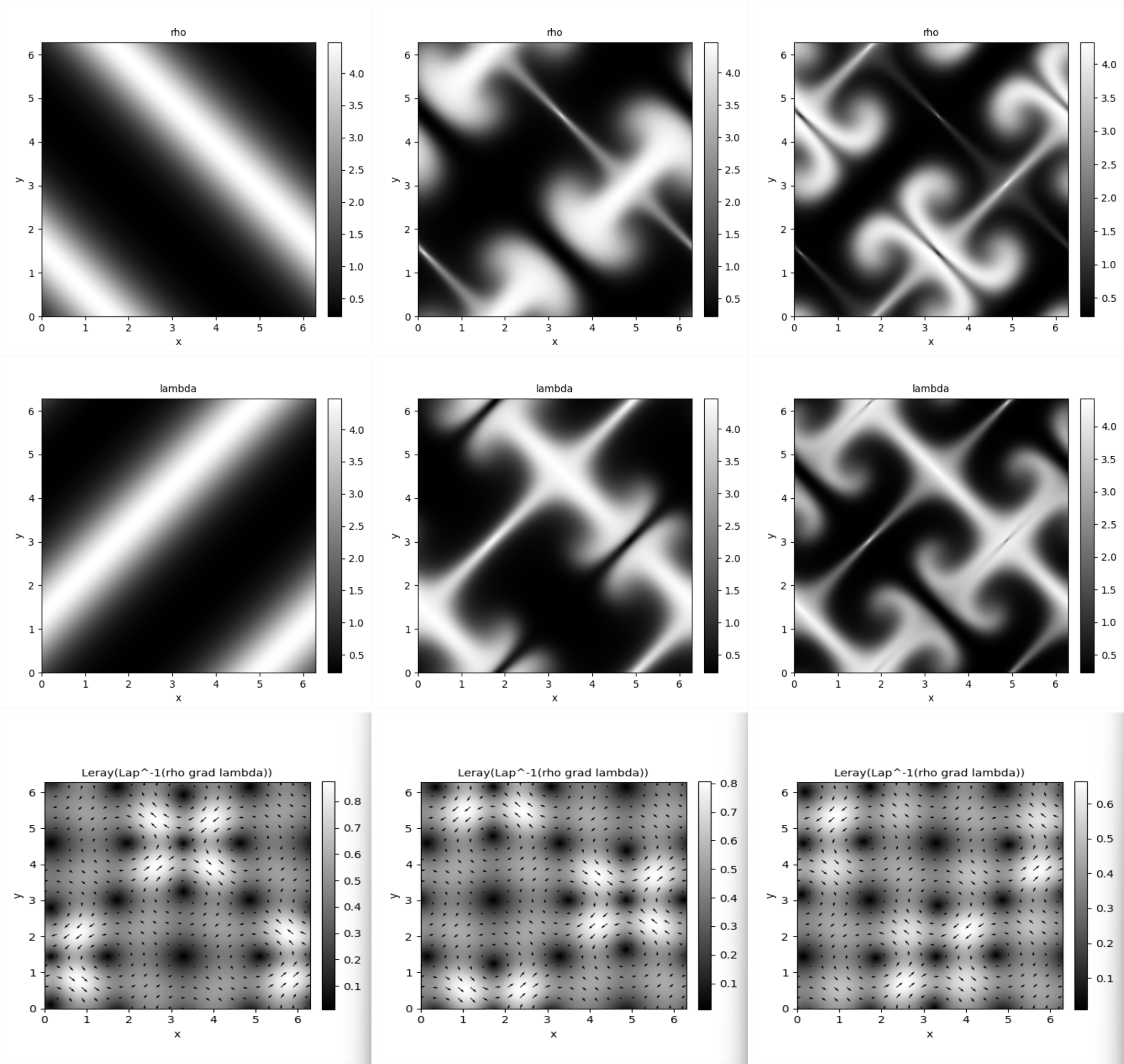}
    \caption{Geodesic evolution on $\mathbb{T}^2$, shown at three equally spaced times from $t = 0$ to $t = 5$ (left to right). The rows show the density $\rho$ (top), the costate $\lambda$ (middle), and the velocity $v$ (bottom). Both $\rho$ and $\lambda$ start out as smooth diagonal bands running in roughly perpendicular directions. The velocity generated by $\rho \nabla \lambda$ shears the bands of $\rho$ against those of $\lambda$, stretching them into progressively thinner filaments.
    }
    \label{fig:geodesic_evolution}
\end{figure}

\section{Gradient Flows} \label{gradient_flows_sec}

As another application of the Riemannian structure described in Section \ref{riemannian_structure_sec}, we now define \emph{gradient flows} on the reachable set $(\Reach,m)$. Gradient flows provide yet another approach to control for mixing. Whereas the full optimal control solutions \eqref{necc_cond_dyn_eqn}-\eqref{necc_cond_v_eqn} are globally optimal between their endpoints but require expensive iterative numerical solution due to the split boundary conditions, gradient flows can be viewed as locally optimal or ``greedy'' solutions -- not nearly as efficient, but much easier to compute, since they may simply be integrated forward in time.

Given a Riemannian manifold $M$ with metric $g$ and an energy function $E: M \to \R$, the gradient flow\footnote{\label{metric_gradient_flow_footnote}It is worth noting that there is also a well-developed theory of gradient flows in \emph{metric spaces} \citep{Ambrosio2005} which does not rely on the Riemannian structure. What the Riemannian structure buys us is explicit formulas for these gradient flows.} of $E$ on $M$ is written
\begin{equation} \label{riemannian_gradient_flow_eqn}
    \frac{d}{dt} x ~=~ - \grad_g E(x) ,
\end{equation}
where $\grad_g$ denotes the \emph{Riemannian gradient}, defined as the Riesz representation of the first variation. That is, the first variation $\delta E$ is defined as the linear functional
\begin{equation}
    \delta E(x)[u] ~:=~ \lim_{s \to 0} \frac{1}{s} \big( E(\eta_s) - E(\eta_0) \big) ,
\end{equation}
where $\eta: [-\varepsilon,\varepsilon] \to M$ is any curve with $\eta_0 = x \in M$ and $\eta_0' = u \in T_x M$. The Riemannian gradient $\grad_g E$ is then the unique vector field on $M$ satisfying
\begin{equation} \label{grad_def_eqn}
    \lb \grad_g E(x) , u \rb_x ~=~ \delta E(x)[u]
\end{equation}
for all $x \in M$ and $u \in T_xM$.

We now compute an explicit expression for the Riemannian gradient in our setting, taking $M = (\Reach,m)$. Let $\rho: [-\varepsilon,\varepsilon] \to \Reach$ be a curve in the reachable set, generated by a velocity field $v$. Without loss of generality, we may take $v$ to be optimal at each time (see Sections \ref{riemannian_structure_sec} and \ref{helmholtz_sec}), so that the tangent vector $\tau = \partial_t \rho = -\nabla \cdot (\rho v)$ has $v$ as its minimal representative. Let $f: \Reach \to \R$ be a scalar function, and evaluate the right-hand side of \eqref{grad_def_eqn} by differentiating $f$ along $\rho$, applying the dynamics, and integrating by parts:
\begin{align}
    \delta f(\rho)[\partial_t \rho] ~&=~ \lb \grad_{L^2} f(\rho) , - \nabla \cdot (\rho v) \rb_{L^2} \nonumber \\
    ~&=~ \lb \rho \nabla \grad_{L^2} f(\rho) , v \rb_{L^2} \nonumber \\
    ~&=~ \lb (K^*K)^{-1} \rho \nabla \grad_{L^2} f(\rho) , v \rb_\V .
\end{align}
Here, $\grad_{L^2} f(\rho)$ is the $L^2$-Riemannian gradient of $f$, sometimes also called the \emph{functional derivative}. Note that this object is a scalar field on $\Omega$ (for fixed $\rho$), but a vector field on $\Reach$ (as a function of $\rho$), consistent with our identification of tangent vectors in Section \ref{riemannian_structure_sec}. Recall that since $v$ is optimal, it must satisfy $v = \Pi_{\1,K} v$, and using the fact that $\Pi_{\1,K}$ is a $\V$-orthogonal (hence $\V$-self-adjoint) projection,
\begin{align}
    \delta f(\rho)[\partial_t \rho] ~&=~ \lb (K^*K)^{-1} \rho \nabla \grad_{L^2} f(\rho) , \Pi_{\1,K} v \rb_\V \nonumber \\
    ~&=~ \lb \Pi_{\1,K} (K^*K)^{-1} \rho \nabla \grad_{L^2} f(\rho) , v \rb_\V .
\end{align}
Both arguments in the above inner product are now optimal representatives for their respective equivalence classes -- $v$ by assumption, and the left argument by virtue of its form $\Pi_{\1,K}(K^*K)^{-1}\rho\nabla\xi$. Thus, this $\V$-inner product coincides with the Riemannian metric \eqref{inner_prod_eqn}:
\begin{equation}
    \delta f(\rho)[\partial_t \rho] ~=~ \big\lb \Pi_{\1,K} (K^*K)^{-1} \rho \nabla \grad_{L^2} f(\rho) , \, v \big\rb_{\rho} .
\end{equation}
Comparing with the defining property \eqref{grad_def_eqn}, we identify the Riemannian gradient, represented by its minimal velocity field, as
\begin{equation} \label{grad_m_formula_eqn}
    \grad_m f(\rho) ~=~ \Pi_{\1,K} \left[ (K^*K)^{-1} \rho \nabla \grad_{L^2} f(\rho) \right] .
\end{equation}
As a tangent vector at $\rho$, this gradient $\grad_m f(\rho)$ may be represented either by the minimal velocity field as above, or by the corresponding density perturbation $\tau = -\nabla \cdot (\rho \, \grad_m f(\rho) )$.

With the above derivation, we have just proven the following.

\begin{thm}
    The gradient flow of the energy $E$ on $\Reach$ with respect to the metric $m$ is given by the PDE
    \begin{equation} \label{gradient_flow_pde_eqn}
        \partial_t \rho ~=~ \nabla \cdot \left( \rho \, \Pi_{\1,K} \left[ (K^*K)^{-1} \rho \nabla \grad_{L^2} E(\rho) \right] \right) .
    \end{equation}
\end{thm}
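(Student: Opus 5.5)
The plan is to combine the general definition of the Riemannian gradient flow \eqref{riemannian_gradient_flow_eqn} with the explicit formula \eqref{grad_m_formula_eqn} just derived, and then to translate the velocity-field representative back into a density perturbation using the identification of tangent vectors from Section~\ref{riemannian_structure_sec}. The result is essentially a bookkeeping statement: the work is in checking that this identification is consistent with the sign and with the minimal-representative convention.

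\textbf{Step 1: tangent vectors.} Recall from \eqref{tangent_space_tau_eqn}--\eqref{equivalence_classes_eqn} that a tangent vector at $\rho$ is a perturbation $\tau = -\nabla\cdot(\rho v)$ with $\nabla\cdot v = 0$. It is represented by the equivalence class $[v]_\rho$, and within that class by its minimal-norm element. The map $[v]_\rho \mapsto -\nabla\cdot(\rho v)$ is linear and well defined on classes. Hence negating a tangent vector corresponds to negating its minimal representative: the class of $-v$ is $[-v]_\rho$, and $-\bar v$ is still minimal because the norm is homogeneous.

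\textbf{Step 2: the gradient as a density perturbation.} By the derivation preceding the theorem, the minimal representative of $\grad_m E(\rho)$ is $\bar v_E := \Pi_{\1,K}\big[(K^*K)^{-1}\rho\nabla\grad_{L^2}E(\rho)\big]$. I would confirm that $\bar v_E$ is genuinely admissible:
\begin{itemize}
\item it is divergence-free, since it lies in the range of $\Pi_{\1,K}$, i.e.\ $\Nu(\nabla\cdot\ph)$;
\item it is minimal in its class, since it has the form \eqref{vbar_potential_form_eqn}, namely $(K^*K)^{-1}(\rho\nabla\lambda + \nabla\gamma)$ with $\lambda = \grad_{L^2}E(\rho)$. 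This form follows from \eqref{compact_v_eqn} together with the Helmholtz decomposition (Theorem~\ref{weighted_helmholtz_thm}).
\end{itemize}
The corresponding tangent vector in density form is therefore $\tau_E = -\nabla\cdot(\rho\,\bar v_E)$.

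\textbf{Step 3: substitute into the flow.} Setting $\frac{d}{dt}\rho = -\grad_m E(\rho)$, Step 1 says the velocity driving the flow is $v_t = -\bar v_E$. The continuity equation \eqref{continuity_eqn} then gives
\[
\partial_t\rho = -\nabla\cdot(\rho v_t) = \nabla\cdot(\rho\,\bar v_E),
\]
which is exactly \eqref{gradient_flow_pde_eqn}.

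\textbf{Main obstacle.} The delicate point is not the algebra but the justification that the variational identity \eqref{grad_def_eqn} holds for \emph{every} tangent vector $u$, not only those generated by the particular curve used above. I would argue this as follows:
\begin{itemize}
\item Any $u\in T_\rho\Reach$ has a minimal representative $\bar u$ (Hilbert projection theorem, Section~\ref{riemannian_structure_sec}).
\item The curve generated by the time-independent field $\bar u$ has initial tangent $u$.
\item The computation preceding the theorem then yields $\delta E(\rho)[u] = \langle \bar v_E, \bar u\rangle_{\V} = \langle \grad_m E, u\rangle_\rho$. This uses only the $\V$-self-adjointness of $\Pi_{\1,K}$ and $\Pi_{\1,K}\bar u = \bar u$.
\end{itemize}
Uniqueness of the Riesz representative then pins down $\grad_m E$. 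The remaining formal assumptions are that the image is closed and that $\grad_{L^2}E$ is regular enough for $\rho\nabla\grad_{L^2}E\in\V'$; I would adopt both in the paper's formal spirit.
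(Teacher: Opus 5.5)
Your proposal is correct and follows essentially the same route as the paper. The paper derives \eqref{grad_m_formula_eqn} by differentiating $E$ along a curve driven by a minimal-representative velocity field, inserting $\Pi_{\1,K}$ via $\V$-self-adjointness, and reading off the Riesz representative; it then substitutes the density form $\tau = -\nabla\cdot(\rho\,\grad_m E(\rho))$ into $\partial_t\rho = -\grad_m E$. Your extra checks (the sign under negation, minimality of $\bar v_E$, and realizing every tangent vector by the curve generated by its own minimal representative) simply make explicit what the paper's ``without loss of generality'' step covers.
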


\begin{proof}
    Follows directly by applying the density perturbation form of the gradient \eqref{grad_m_formula_eqn} to the definition of the gradient flow \eqref{riemannian_gradient_flow_eqn}.
\end{proof}

It is also worth noting that the gradient flow dynamics \eqref{gradient_flow_pde_eqn} satisfy the following \emph{energy dissipation equality} (EDE)
\begin{align} \label{ede_eqn}
    \frac{d}{dt} E(\rho) ~&=~ \delta E(\rho)[\partial_t \rho] ~=~ \lb \grad_m E(\rho) , \partial_t \rho \rb_{\rho} \nonumber \\
    ~&=~ \lb \grad_m E(\rho) , -\grad_m E(\rho) \rb_{\rho} ~=~ -\big\| \grad_m E(\rho) \big\|_{\rho}^2 ,
\end{align}
sometimes taken as an alternative \emph{definition} of gradient flow in the metric space setting (see Footnote \ref{metric_gradient_flow_footnote}).

Gradient flows for mixing may then be constructed simply by taking the energy functional $E$ to be a measure of mixedness, say $E(\rho) := \tfrac{1}{2} \, d^2(\rho,\rho_*)$, (half) the squared distance to the target density $\rho_*$ (typically taken to be the ``fully mixed'' uniform density with the same mass as $\rho$). By construction of the metric $m$, the resulting trajectory then decreases $d(\rho,\rho_*)$ as much as possible for any fixed mixing effort at each instant in time.

It is also interesting to remark on the shared structure between the gradient flow dynamics \eqref{gradient_flow_pde_eqn} and the geodesic equations \eqref{gen_k_dynamics_1}-\eqref{gen_k_dynamics_2} from before. Both describe the evolution of the density $\rho$, under the continuity equation $\partial_t \rho = - \nabla \cdot (\rho v)$, driven by a vector field of the form $v = \Pi_{\1,K} \left( (K^*K)^{-1} \rho \nabla \xi \right)$. As we have seen, a vector field of this form is always the optimal representative for its equivalence class. The difference between the two equations is where the scalar potential $\xi$ comes from. In the geodesic equations, $\xi$ is the costate $\lambda$, which has an interpretation as the derivative of the cost-to-go (defined via the Hamilton-Jacobi-Bellman equation). In the gradient flow equation, $\xi$ is the $L^2$ gradient (i.e. the functional derivative) of the energy $\grad_{L^2} E(\rho)$. These two objects coincide precisely in the case where $E(\rho) = m(\rho,\rho_f)$, i.e., where the energy equals the cost-to-go which equals the distance to the final state.

In the next section, we provide numerical simulations of these gradient flows.

\subsection{Numerical Simulations} \label{gradient_flow_simulations_sec}
We solve the gradient flow equation \eqref{gradient_flow_pde_eqn} in the enstrophy-penalized setting on the flat torus $\mathbb{T}^2$, reusing the discretization of Section~\ref{geodesic_simulations_sec} almost unchanged. The only structural difference is that there is no costate to evolve, so instead of advecting $\lambda$ alongside $\rho$ we compute the first variation $\delta E(\rho) = \grad_{L^2} E(\rho)$ from $\rho$ at every timestep and use it in place of $\lambda$ in the velocity formula
\begin{equation} \label{gradient_flow_velocity_eqn}
    v ~=~ \Pi[ \Delta^{-1} \big( \rho \nabla \delta E(\rho) \big) ].
\end{equation}

We take $E(\rho)$ to be half the squared $\dot{H}^{-1}$ (homogeneous negative-order Sobolev) distance between $\rho$ and $\rho^*$, where $\rho^*$ is the uniform density with the same mass as $\rho$. Equivalently, $E$ is half the squared $\dot{H}^{-1}$ norm of the zero-mean part $\tilde{\rho}$ of $\rho$:
\begin{equation}
    E(\rho) ~:=~ \frac{1}{2} \, \| \rho - \rho^* \|_{\dot{H}^{-1}}^2 ~=~ \frac{1}{2} \, \| \tilde{\rho} \|_{\dot{H}^{-1}}^2 .
\end{equation}
On $\mathbb{T}^2$, this energy has the Fourier representation
\begin{equation}
    E(\rho) ~=~ \frac{1}{2} \, \big\| \nabla \Delta^{-1} \tilde{\rho} \big\|_{L^2}^2 ~=~ \frac{1}{2} \sum_{\xi \neq 0} \frac{1}{|\xi|^2} \, |\hat{\tilde{\rho}}(\xi)|^2 ,
\end{equation}
with first variation
\begin{equation}
    \delta E(\rho) ~=~ -\Delta^{-1} \tilde{\rho} , \qquad \widehat{\delta E(\rho)}(\xi) ~=~ \frac{1}{|\xi|^2} \, \hat{\tilde{\rho}}(\xi) .
\end{equation}
This choice is convenient numerically, since it costs only one additional scalar Poisson solve per timestep on top of the machinery already in place. Each velocity update computes $\delta E(\rho)$ in Fourier space, transforms back to physical space to form $\rho \nabla \delta E(\rho)$ using centered differences, and then applies \eqref{zero_mean_center_eqn}-\eqref{leray_fourier_eqn} exactly as in Section~\ref{geodesic_simulations_sec}. 
We then use exactly the same conservative upwind finite difference scheme from Section~\ref{geodesic_simulations_sec} to advance $\rho$.

Figure~\ref{fig:gradient_flow_evolution} shows the evolution from the initial condition
\begin{equation}
    \rho(x,y) ~=~ 2 + 1.5 \sin(x) \cos(y) + \sin(2x) \sin(2y) + 0.8 \cos(3x + y) .
\end{equation}
We used $N = 256$ (so $\delta x = 2\pi/256 \approx 0.0245$), $C = 0.45$, and a final time $T = 75$. The energy decreases at every timestep, falling by $86.2\%$ over the run (from $E = 6.80$ to $E = 0.94$), and total mass is conserved to machine precision. Meanwhile, the $L^2$ norm of $\rho$ contracts by $2.09\%$ over the run
($\|\rho_0\|_{L^2} = 14.23 \to \|\rho_f\|_{L^2} = 13.94$).

As in Section~\ref{geodesic_simulations_sec}, we verified predicted behavior across resolutions by repeating the run at $N = 64, 128, 256, 512$ with $C = 0.45$ and $T = 75$ held fixed, tracking the total energy drop (Table~\ref{tab:gradient_flow_resolution}).

\begin{table}[ht]
\centering
\caption{Resolution dependence of the relative decrease in the energy $E$ over the gradient-flow run of Figure~\ref{fig:gradient_flow_evolution} (CFL number $C = 0.45$, final time $T = 75$). Here $N$ is the number of grid points in each direction and $\delta x = 2\pi/N$ is the grid spacing.}
\label{tab:gradient_flow_resolution}
\begin{tabular*}{0.61\textwidth}{@{\extracolsep{\fill}} c c c}
\toprule
$N$ & $\delta x$ & Energy drop \\
\midrule
64  & $9.82 \times 10^{-2}$ & $87.95\%$ \\
128 & $4.91 \times 10^{-2}$ & $86.86\%$ \\
256 & $2.45 \times 10^{-2}$ & $86.16\%$ \\
512 & $1.23 \times 10^{-2}$ & $85.78\%$ \\
\bottomrule
\end{tabular*}
\end{table}

The energy drop settles down as $N$ increases, with the values at $N = 256$ and $N = 512$ already within a few tenths of a percent of one another. This stability is the main point of the check, and gives confidence that the numerical method is accurately resolving the relevant dynamics over this time horizon. 
The code used to generate these figures is publicly available at \url{https://github.com/johnigraszekcode/Fluid-Mixing-Code}.

\begin{figure}[ht]
    \centering
    \includegraphics[width=\linewidth]{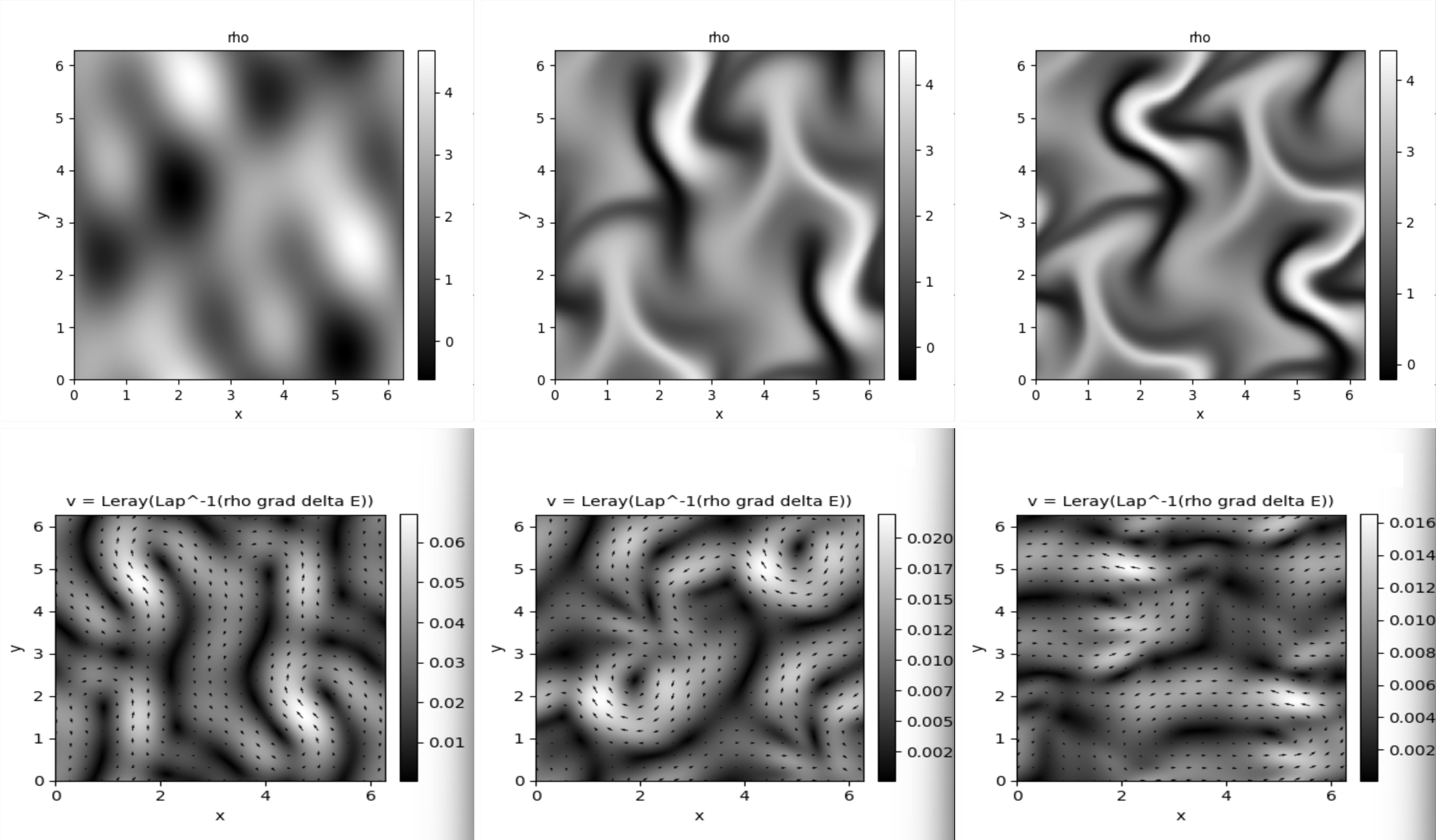}
    \caption{Gradient flow of the $\dot{H}^{-1}$ energy on $\mathbb{T}^2$ at $t = 0$, $37.5$, and $75$ (left to right). The rows show the density $\rho$ (top) and the divergence-free velocity $v$ (bottom); the velocity is shown as its magnitude, with arrows indicating direction. The initial density is a superposition of several Fourier modes at different scales. The flow quickly smooths out the small-scale structure and rearranges the density into a few sharp-fronted channels, carrying mass from high-density regions toward low-density ones. The energy decreases steadily throughout, dropping by about $86\%$, while the total mass stays fixed.}
    \label{fig:gradient_flow_evolution}
\end{figure}

\section{The Lagrangian Lift} \label{lagrangian_lift_sec}

Practically all of the problems investigated so far -- which have been posed in terms of densities -- have counterparts in terms of flow maps. In fluid mechanics, these two settings are referred to as Eulerian and Lagrangian. We describe (Section \ref{flow_maps_sec}) this correspondence, and show (Section \ref{riemannian_submersion_sec}) how the geometry of the two settings are related via Riemannian submersion. We then explain (Section \ref{lie_theory_sec}) how this Eulerian-Lagrangian correspondence and the construction of transport metrics are unified in terms of Lie group and Lie algebra actions on homogeneous spaces. This section is modeled on Arnold's theory of diffeomorphism groups \citep{Arnold1966} and Otto's theory of Riemannian calculus for optimal transport \citep{Otto2001}.

\subsection{From Densities to Flow Maps} \label{flow_maps_sec}

The problems investigated so far have been posed in the \emph{Eulerian} framework, i.e., in terms of densities $\rho$ evolving over a spatial domain $\Omega$. However, it is also possible to study these problems in a \emph{Lagrangian} framework, i.e., in terms of the evolutions of the positions of individual particles.

In the Lagrangian setting, we index (or ``label'') particles by their initial position $x \in \Omega$. The positions of these particles then evolve to follow the vector field $v$. In other words, trajectories of particles evolve along \emph{integral curves} of $v$. Letting $\phi_t(x)$ denote the position of the particle with index $x$ at time $t$, $\phi_t(x)$ evolves according to the ordinary differential equation
\begin{equation}
    \frac{d}{dt} \phi_t(x) ~=~ v_t(\phi_t(x)) ~=:~ v_t \of \phi_t(x)
\end{equation}
with initial condition $\phi_0(x) = x$. If we wish to describe the evolution of all particles at once, then the entire function $\phi$ evolves according to the PDE
\begin{equation} \label{flow_eqn}
    \partial_t \phi_t ~=~ v_t(\phi_t(\cdot)) ~=:~ v_t \of \phi_t
\end{equation}
with initial condition $\phi_0 = e$, the identity. The equation \eqref{flow_eqn} is called the \emph{flow} and the map $\phi_t: \Omega \to \Omega$ is called the \emph{flow map}. By the existence and uniqueness of ODEs (say, if $v$ is Lipschitz), the flow is well-defined and the flow map is continuous. Furthermore, it is a foundational result in differential geometry that if $v_t$ is of class $C^k$ (in space), $k \geq 1$, then $\phi_t$ is a $C^k$ diffeomorphism. The set of diffeomorphisms from $\Omega \to \Omega$ (possessing multiplication (composition), identity, and inverse) form a group -- the \emph{diffeomorphism group} of $\Omega$, denoted $\diffomega$, or simply $\D$ for short. It is thus very natural to view flow maps $\phi$ as elements of $\D$, and we adopt this viewpoint throughout. This algebraic structure -- composition and inversion of maps -- has no counterpart for densities, and it is what makes the tools of Lie theory available to us in this setting. We develop this viewpoint in Section \ref{lie_theory_sec}.

One expects that the flow map $\phi$ is a somewhat ``larger'' object than $\rho$, since it is carrying extra information about labels/positions, and this is indeed the case. While $\rho$ maps from $\Omega \to \R$, $\phi$ maps from $\Omega \to \Omega$, so if $\dim(\Omega) > 1$ (as it must be for any nontrivial mixing to occur), then $\phi$ lives in a genuinely higher-dimensional space than $\rho$. On the other hand, one expects that if we know the positions of all of the particles, then we should be able to reconstruct the density. And indeed, it holds that
\begin{equation} \label{pushforward_eqn}
	\rho_t ~=~ (\phi_t)_\# \rho_0 , 
\end{equation}
where $(\cdot)_\#$ denotes the \emph{measure pushforward}, defined via the formula
\begin{equation}
    \int_A (\phi_t)_\# \rho_0 ~=~ \int_{\phi_t^{-1}(A)} \rho_0
\end{equation}
for all $A \subset \Omega$ measurable. Intuitively, $(\phi_t)_\# \rho_0$ is the density formed by taking all of the particles in $\rho_0$ and ``pushing them forward'' according to the map $\phi_t$. This relationship -- $\phi$ encoding at least all the information that $\rho$ has, but living in a higher-dimensional space -- may be summarized by saying that $\phi$ is a \emph{lift} of $\rho$.

We can make the sense in which $\phi$ is a lift of $\rho$ more precise. Two flow maps $\phi_1$ and $\phi_2$ push $\rho_0$ forward to the \emph{same} density -- $(\phi_1)_\# \rho_0 = (\phi_2)_\# \rho_0$ -- if and only if they are related by a measure-preserving transformation: there exists a map $\omega$ with $\omega_\# \rho_0 = \rho_0$ such that $\phi_1 \of \omega = \phi_2$. We refer to such maps $\omega$ as measure-preserving \emph{relabelings}, since they reindex the particles according to $x \leftarrow \omega(x)$ without altering the initial density $\rho_0$. Thus each density $\rho$ corresponds not to a single flow map but to an entire equivalence class of flow maps, equivalent modulo measure-preserving relabeling. In other words, the space of densities is recovered from the space of flow maps by quotienting out by relabeling. We will return to this correspondence in the next two subsections.

The dynamics in the Lagrangian setting are then given by the flow \eqref{flow_eqn} together with the incompressibility constraint
\begin{equation}
    \nabla \cdot v ~=~ 0 .
\end{equation}
Earlier, we identified several conserved quantities for $\rho$ arising from this constraint. On the other hand, $\phi$ admits just (one main) one: the Jacobian determinant
\begin{equation}
    \det D \phi ~\equiv~ 1 ~=~ \text{constant} .
\end{equation}
The set of flow maps which satisfy this constraint is exactly the set of \emph{volume-preserving} diffeomorphisms $\{ \phi \in \diffomega ~:~  \det D \phi \equiv 1 \}$, denoted\footnote{The `S' here stands for ``special'', as in $SL(n)$, the special linear group of matrices with determinant 1.} $\sdiffomega$, or $\mathcal{S}$ for short. $\mathcal{S}$ is a subgroup of the full diffeomorphism group $\D$.

We claim without proof that essentially all of the problems that we have investigated so far in terms of densities have their counterparts in terms of flow maps, and that the structure of these problems is nearly identical. For example, reachability is an equivalence relation on the state space $\mathcal{S}$. (In the next subsection, we will show that $\mathcal{S}$ is more than just a group -- it is a manifold, and thus a \emph{Lie group}. The classes of mutually reachable states are exactly the connected components of $\mathcal{S}$. In particular, since the initial state is almost always taken to be the identity $e$, the reachable set is the \emph{identity component} of $\mathcal{S}$, denoted $\mathcal{S}_0$.)

We may also pose the optimal mixing and optimal state-transfer problems in terms of flow maps as follows.

\begin{prob}[Optimal Mixing, Flow Maps] \label{mixing_flow_maps_prob}
	Given initial and target densities $\rho_0,\rho_* \in \P$, time horizon $T$, norm $\| \cdot \| : \V \to \R$, metric $d: \P \times \P \to \R$, and trade-off parameter $\alpha >0$, solve
	\begin{equation} %
		\inf_{\phi, v} \, \int_0^T \| v_t \|^2 \, dt \, + \, \alpha d^2 \big( (\phi_T)_\# \rho_0,\rho_* \big) \qquad \text{s.t.} \qquad \begin{cases}
			\partial_t \phi = v \of \phi \\
			\nabla \cdot v = 0 \\
			\phi_0 = e
		\end{cases} \hspace{-4mm}
	\end{equation}
\end{prob}

\begin{prob}[Optimal State-Transfer, Flow Maps] \label{state_transfer_flow_maps_prob}
	Given mutually reachable initial and final states $\phi_0, \phi_1$ and a norm $\| \cdot \| : \V \to \R$, solve
	\begin{equation} %
		\inf_{\phi, v} \, \int_0^1 \| v_t \|^2 \, dt \qquad \text{s.t.} \qquad \begin{cases}
			\partial_t \phi = v \of \phi \\
			\nabla \cdot v = 0 \\
			\phi_0 ~ \text{fixed} \\
			\phi_1 ~ \text{fixed}
		\end{cases}
	\end{equation}
    Again, we refer to the value of this problem as the \emph{transfer cost} from $\phi_0$ to $\phi_1$, and denote it $\T_\mathcal{S}(\phi_0,\phi_1)$.
\end{prob}

Just as before, the square root of the transfer cost $\T_\mathcal{S}^{1/2}$ forms a metric on the reachable set, and this enables a decomposition of the optimal mixing problem. In this setting, the mixing problem decomposes into two component subproblems (rather than three): selection of an optimal final state $\phi_f$, and selection of a minimum-length path $\phi$ in the reachable set from $e$ to $\phi_f$. Unlike in the Eulerian setting, however, we do not need to further optimize over velocity fields $v$ which generate such a path: $\phi$ and $v$ are in 1-1 correspondence, and $v$ can be immediately recovered from \eqref{flow_eqn} as
\begin{equation} \label{v_recovery_eqn}
	v_t ~=~ (\partial_t \phi_t) \of \phi_t^{-1} .
\end{equation}
(Note that such a $v$ is necessarily incompressible since $\det D \phi \equiv 1$.) Our objective in this section is not to prove these facts, but instead to explore the geometry and structure of this Eulerian-Lagrangian correspondence, a task which we pick up in the next subsection.

\subsection{The Riemannian Submersion} \label{riemannian_submersion_sec}

Just as before, when the norm $\| \cdot \| = \| \cdot \|_\V$ is induced by an inner product $\lb \cdot , \cdot \rb_\V$, the reachable set $\mathcal{S}_0$ carries the formal structure of an infinite-dimensional Riemannian manifold. This is almost immediate to see in this setting since there are no equivalence classes of $v$ to worry about. Here, the manifold $M$, tangent spaces $T_xM$, and inner product $\lb \cdot , \cdot \rb_x$ are given respectively by:
\begin{enumerate}
    \item the reachable set $\mathcal{S}_0$,
    \item the sets of derivatives $\eta_0^\prime$ of curves $\eta: (-\varepsilon , \varepsilon) \to \mathcal{S}_0$ with $\eta_0 = \phi$,
    \item the inner product
    \begin{equation} \label{riemannian_metric_lagrangian_eqn}
        \lb u^\phi , v^\phi \rb_\phi ~=~ \lb u , v \rb_\V ~:=~ \lb K u , K v \rb_{L^2} .
    \end{equation}
\end{enumerate}
Note that in the above definition of the Riemannian metric, $u^\phi$ and $v^\phi$ denote tangent vectors at $\phi$, while $u$ and $v$ denote the corresponding spatial velocity fields (as related by \eqref{flow_eqn} and \eqref{v_recovery_eqn}):
\begin{equation} \label{tangent_vector_translation_eqn}
    v^\phi ~=~ v \of \phi \qquad \Leftrightarrow \qquad v ~=~ v^\phi \of \phi^{-1} .
\end{equation}
The Riemannian distance $m_\mathcal{S}$ is then seen to be equal to the square root of the transfer cost $\T_\mathcal{S}^{1/2}$ by the same arguments as in Section \ref{metrics_decomp_sec}.

As both a group and a manifold, $\mathcal{S}_0$ carries the formal structure of a \emph{Lie group}, with the incompressible vector fields on $\Omega$ as the corresponding Lie algebra. The tools of Lie theory thus apply, for example, translation of tangent vectors on $\mathcal{S}_0$ (i.e., vector fields on $\Omega$) by left- and right-action. We will say more about this in the following subsection. The important point here is that \eqref{tangent_vector_translation_eqn} describes the right-translation of tangent vectors $v^\phi \in T_\phi \mathcal{S}_0$ to spatial velocity fields $v \in T_e \mathcal{S}_0$, and so the metric \eqref{riemannian_metric_lagrangian_eqn} is \emph{right-invariant} by construction, a fact which we will remark on later.

Now, observe that given a fixed initial density $\rho_0$, the pushforward operation \eqref{pushforward_eqn} defines a map $\A = \A_{\rho_0}$ from $\mathcal{S}_0 \to \Reach$:
\begin{equation} \label{submersion_eqn}
    \A_{\rho_0} : \mathcal{S}_0 \to \Reach , ~ \phi \mapsto \phi_\# \rho_0 =: \rho .
\end{equation}
This map is onto (by the definition of reachability), but it is many-to-one in general: as established in the previous subsection, its fibers are exactly the measure-preserving relabeling classes, so that $\Reach$ is recovered as the quotient of $\mathcal{S}_0$ under relabeling. We now investigate the geometric structure of this quotient.

Since $\A$ is a map between manifolds, we may take its differential. By matching the dynamics \eqref{continuity_eqn}, \eqref{flow_eqn} under the correspondences \eqref{v_recovery_eqn}, \eqref{submersion_eqn}, we may read off the differential $D \A = D \A_{\rho_0}$ as
\begin{equation} \label{submersion_differential_eqn}
    D \A_{\rho_0} (\phi) [v^\phi] ~=~ - \nabla \cdot (\rho v) , \qquad \rho = \phi_\# \rho_0 , \quad v = v^\phi \of \phi^{-1} .
\end{equation}
By the definition \eqref{tangent_space_tau_eqn} of the tangent spaces $T_\rho \Reach$, the differential $D \A(\phi)$ is onto for each $\phi$, meaning that the map $\A$ is a \emph{submersion} of $\mathcal{S}_0$ into $\Reach$. What makes this submersion \emph{Riemannian} is that each differential $D \A(\phi)$ is, under the identification of $T_\rho \Reach$ with minimal-norm representatives (see Section \ref{riemannian_structure_sec}), exactly an orthogonal projection with respect to the inner product $\lb \cdot , \cdot \rb_\phi$. Indeed, viewing the image of $D\A(\phi)$ through this identification, the differential sends $v^\phi \mapsto \bar{v}$, which is precisely the projection $\Theta_{\rho}$ of Section \ref{riemannian_structure_sec}:
\begin{equation} \label{dA_theta_eqn}
    D \A_{\rho_0}(\phi) ~=~ \Theta_{\rho} .
\end{equation}

The submersion property here essentially holds by construction. Indeed, the metric on $(\Reach,m)$ was \emph{defined} in Section \ref{riemannian_structure_sec} as the $\V$-inner product of minimal representatives, which is exactly the statement that $D\A$ is an isometry on its image:
\begin{equation}
    \lb u , v \rb_\rho ~:=~ \lb \bar{u} , \bar{v} \rb_\V ~=~ \big\lb D\A_{\rho_0}(\phi)[u^\phi] , \, D\A_{\rho_0}(\phi)[v^\phi] \big\rb_\phi ,
\end{equation}
which equals $\lb u^\phi , v^\phi \rb_\phi$ if $u^\phi,v^\phi \in \Im(D\A_{\rho_0} (\phi))$. Following \citet{Otto2001}, we may summarize this by saying that the geometry of $(\Reach,m)$ is \emph{induced} by Riemannian submersion on $(\mathcal{S}_0,m_\mathcal{S})$ via $\A_{\rho_0}$. This is an extremely natural operation, as Riemannian submersion is the natural notion of quotient in the category of Riemannian manifolds. A cartoon depiction of the Riemannian submersion is shown in Figure \ref{fig:submersion}.

\begin{figure}[ht]
    \centering
    \includegraphics[width=0.35\linewidth]{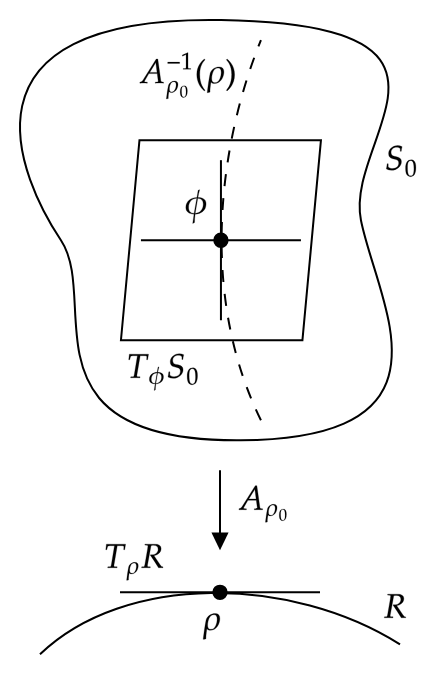}
    \caption{Depiction of the Riemannian submersion $\A_{\rho_0}$ relating the manifolds $\mathcal{S}_0$ and $\mathcal{R}$. The point $\phi$ and its fiber $\A_{\rho_0}^{-1}$ are sent to the point $\rho$. The tangent space $T_\phi \mathcal{S}_0$ is sent to the tangent space $T_\rho \mathcal{R}$ by the differential $D\A_{\rho_0}$. The vertical subspace is the nullspace of the differential, tangent to the fiber. The horizontal subspace is its orthogonal complement, isometric to $T_{\rho}\mathcal{R}$.}
    \label{fig:submersion}
\end{figure}

Since the differential $D \A(\phi)$ is an orthogonal projection, the subspaces $\Im(D\A(\phi))$ and $\Nu(D \A(\phi))$ form an orthogonal decomposition of $T_\phi \mathcal{S}_0$. In the language of differential geometry, $\Im(D\A(\phi))$ is called the \emph{horizontal} subspace and $\Nu(D\A(\phi))$ the \emph{vertical} subspace, and a tangent vector in each is called \emph{horizontal} or \emph{vertical} accordingly. The machinery of Riemannian geometry then allows us to make the following statements:
\begin{enumerate}
    \item If $\eta: [a,b] \to \mathcal{S}_0$ is a geodesic for which $\eta_t'$ is horizontal at some time $t \in [a,b]$, then $\eta_t'$ is horizontal for \emph{every} time $t \in [a,b]$. Such a geodesic is called a \emph{horizontal} geodesic.
    \item If $\eta$ is a horizontal geodesic in $(\mathcal{S}_0,m_\mathcal{S})$, then its image $\rho = \A_{\rho_0} (\eta)$ is a geodesic in $(\Reach,m)$.
\end{enumerate}
This explains the parallel between our earlier result characterizing the incompressible Euler equation as a geodesic equation in the space of densities and the famous result of \citet{Arnold1966} characterizing this same equation as a geodesic equation in the space $\sdiffomega$.

Lastly, we remark that it is exactly the right-invariance of the metric \eqref{riemannian_metric_lagrangian_eqn} which makes it possible to write these geodesic equations as PDEs governing the evolution of $v$. A priori, a geodesic depends on the metric and thus on the base points $\phi \in \mathcal{S}_0$, but right-invariance allows us to right-translate the geodesic to the identity without changing the metric, reducing the geodesic equation to an equation for the spatial velocity field $v$ alone (which lives entirely in the Lie algebra). This is an insight of Arnold's, and leads to the compact, momentum-based Euler-Poincar\'e (EPDiff) form of the geodesic equations
\begin{equation}
    \partial_t \mathfrak{m} ~=~ - \ad_v^* \mathfrak{m} ,
\end{equation}
where $K^*K$ represents the inertia operator and $\mathfrak{m} = K^*Kv$ the momentum, as alluded to in the earlier remark of Section \ref{general_k_sec}. 
We develop this Lie-theoretic viewpoint more thoroughly in the following subsection.

\subsection{A Lie-Theoretic Viewpoint} \label{lie_theory_sec}

We now present a Lie-theoretic viewpoint on the material presented in this paper. This viewpoint explains the structure which emerges in our problem, connects the Eulerian and Lagrangian settings, clarifies the analogy between classical optimal transport and the ``incompressible optimal transport'' we have presented here, and connects these theories with the field of geometric mechanics. In short, these correspondences may all be understood in terms of Lie group and Lie algebra actions on homogeneous spaces. We find it most revealing to first reconstruct the geometric theory of classical optimal transport starting from this algebraic viewpoint (which is not often emphasized in the optimal transport literature). The ``incompressible optimal transport'' we have presented in this paper can then be understood as a small alteration of this construction. We note that similar constructions have recently been developed in several other settings \citep[e.g.,][]{khesin2025universal, bauer2026wasserstein}; see Section \ref{conclusion_sec} for further discussion.

\subsubsection{Classical Optimal Transport}

We begin with an underlying physical space $\Omega$, which we assume to be a compact, connected, $n$-dimensional Riemannian manifold. We consider two fundamental objects associated with $\Omega$, the first being the diffeomorphism group of $\Omega$, denoted $\diffomega$ or $\D$, and the second being the space of probability densities on $\Omega$, denoted $\probomega$ or $\P$. We begin by describing $\D$, and then move on to describing $\P$ and how the two objects interact.

The diffeomorphism group $\D$ is the group of all diffeomorphisms $\phi$ from $\Omega \to \Omega$. $\D$ is a group under composition, inverse, and identity. Moreover, it has the formal structure of an infinite-dimensional manifold, and is thus (formally) an infinite-dimensional Lie group. The corresponding Lie algebra is given by the set of vector fields on $\Omega$, denoted $\vecomega$ or $\d$. The Lie bracket in $\d$ is given by the usual Lie bracket of vector fields, which may be written out in coordinates as
\begin{equation}
    [u,v] ~=~ (u \cdot \nabla) \, v - (v \cdot \nabla) \, u ~=~ Dv[u] - Du[v].
\end{equation}
The Lie exponential map is given by the time-1 map of the flow. That is, we define the flow as in \eqref{flow_eqn} (with $v$ constant-in-time), and then the operator $\exp: \d \to \D$ is defined by $\exp(v) = \phi_1$. The composition operation in the group and bracket operation in the algebra may be connected through Baker-Campbell-Hausdorff and related formulas.

The space of probability densities $\P$ is the space of all densities $\rho: \Omega \to \R$ which are nonnegative $\rho \geq 0$ and integrate to one $\int_\Omega \rho = 1$. In this construction, $\P$ has no algebraic structure of its own.

The group $\D$ acts on $\Omega$ through group action $(\phi,x) \mapsto \phi(x)$ and this induces group actions on the spaces of various quantities which are defined on $\Omega$ -- including, in particular, the space of probability densities $\P$. The canonical group action induced on $\P$ is\footnote{Note that the submersion $\A_{\rho_0}$ from the previous section is $\A(\cdot,\rho_0)$, the group action evaluated at $\rho_0$.}
\begin{equation}
    \A : \D \times \P \to \P , ~ (\phi,\nu) \mapsto \phi_\# \nu .
\end{equation}
Since $\Omega$ is connected, this action is transitive\footnote{This requires additional regularity assumptions on the densities in $\P$, such as being smooth and supported everywhere. This regularity is not relevant to the following discussion, however.}, i.e., for any two probability densities $\nu, \rho \in \P$, there exists some $\phi \in \D$ such that $\rho = \phi_\# \nu$. This means that $\P$ forms a \emph{homogeneous space} for $\D$. Such spaces have very special properties. For example, every homogeneous space admits a representation in terms of cosets of the group. This is explained as follows. Given a reference density $\rho_0$, we first consider the stabilizer (or isotropy) subgroup
\begin{equation}
    \D_{\rho_0} ~:=~ \{ \phi \in \D ~:~ \phi_\# \rho_0 = \rho_0 \} .
\end{equation}
Then, we may identify each point $\rho \in \P$ with a coset\footnote{Note that while $\D_{\rho_0}$ is a subgroup of $\D$, it is not \emph{normal} in $\D$. Thus, the objects $[\rho]$ are properly cosets and not subgroups.} $[\rho] \in \D / \D_{\rho_0}$. In words, each density $\rho \in \P$ is identified with the set of maps $\phi \in \D$ which push the reference $\rho_0$ forward onto $\rho$:
\begin{equation}
    [\rho] ~=~ \{ \phi \in \D ~:~ \phi_\# \rho_0 = \rho \} .
\end{equation}
This is essentially the construction of the relabeling classes of $\phi$ described in Section \ref{flow_maps_sec}. We may also use the notation $[\rho]_{\rho_0}$ if we wish to emphasize the specific reference $\rho_0$ which identifies the cosets.

Similarly, the algebra $\d$ acts on $\Omega$ through algebra action $(v,x) \mapsto v(x)$, and this induces algebra actions on the spaces of various quantities which are defined on $\Omega$, including $\P$. The canonical algebra action induced on $\P$ is
\begin{equation}
    \a : \d \times \P \to T\P , ~ (v,\nu) \mapsto - \nabla \cdot (\nu v) .
\end{equation}
Just as points in the homogeneous space $\P$ can be identified with cosets of the Lie group $\D$, tangent vectors in $T\P$ can be identified with cosets of the Lie algebra $\d$. Given a base point $\rho \in \P$, we first consider the stabilizer/isotropy subalgebra
\begin{equation}
    \d_\rho ~:=~ \{ v \in \d ~:~ -\nabla \cdot(\rho v) = 0 \} .
\end{equation}
Then, we may identify each tangent vector $\tau \in T_\rho \P$ with a coset $[\tau] \in \d / \d_\rho$. In words, each tangent vector $\tau \in T_\rho \P$ is identified with the set of vector fields $v \in \d$ which generate it:
\begin{equation}
    [\tau] ~=~ \{ v \in \d ~:~ \tau = - \nabla \cdot (\rho v) \} .
\end{equation}
This is essentially the construction of the equivalence classes of $v$ described in Section \ref{riemannian_structure_sec}. We may also use the notation $[\tau]_\rho$ if we wish to emphasize the specific base point $\rho$ which identifies the cosets.

Now, if we have a Riemannian metric $\lb \cdot , \cdot \rb_\phi$ defined on $\D$, this naturally induces a Riemannian metric $\lb \cdot , \cdot \rb_\rho$ on $\P$ via Riemannian submersion. At the base point $\phi \in \D$, the stabilizer subalgebra $\d_\rho$ (where $\rho = \phi_\# \rho_0$) forms a subspace of $T_\phi \D$. This is exactly the vertical subspace of the Riemannian submersion. The metric $\lb \cdot , \cdot \rb_\phi$ then allows us to identify an orthogonal complement $\d_\rho^\perp$, which is exactly the horizontal subspace of the submersion. Tangent vectors $\tau \in T_\rho \P$ which are identified with cosets in $\d / \d_\rho$ may then also be identified with elements in the orthogonal complement $\d_\rho^\perp$. The natural metric on $\P$ is then given by restricting the metric on $\D$ to each horizontal subspace and applying this identification.

We note that the Riemannian metric $\lb \cdot , \cdot \rb_\phi$ which is defined on $\D$ is essentially a free choice in this construction. It may or may not be induced by an underlying metric on $\Omega$, and it may or may not be flat, left- or right-invariant, or have any other number of nice properties. In classical optimal transport, for example, the Riemannian metric $\lb \cdot , \cdot \rb_\phi$ is defined by the formula
\begin{equation}
    \lb u^\phi , v^\phi \rb_\phi ~=~ \int_\Omega \lb u^\phi(x) , v^\phi(x) \rb_{\phi(x)} \, \rho_0(x) \, dx ~=~ \int_\Omega \lb u(x) , v(x) \rb_x \, \rho(x) \, dx ,
\end{equation}
where $\lb \cdot , \cdot \rb_x$ is the Riemannian metric on the underlying space $\Omega$ and $\rho = \phi_\# \rho_0$. It is neither left- nor right-invariant, but it is flat if $\Omega$ is flat, that is, if $\lb \cdot , \cdot \rb_x$ is independent of $x$ (and thus $\lb \cdot , \cdot \rb_\phi$ is independent of $\phi$). Interestingly, the geodesic equation in this setting -- given by the pressureless Euler equation \eqref{pressureless_euler_eqn} -- also reduces to a PDE in $v$ alone, but for a totally different reason than in the case described in the previous subsection. Both reductions rely on an identification of tangent spaces in order to represent tangent vectors in the Lie algebra, but while this identification is carried out by right-translation in our setting, it is carried out by parallel transport in the setting of classical optimal transport.

\subsubsection{Incompressible Optimal Transport}

The algebraic and geometric construction of our ``incompressible optimal transport'' is nearly identical to that of classical optimal transport, but with two small alterations.

First, instead of considering the action of the full diffeomorphism group $\D$ on $\P$, we consider just the action of the subgroup $\mathcal{S}$ of volume-preserving diffeomorphisms. $\mathcal{S}$ is a Lie subgroup of $\D$, and its Lie algebra $\s$ -- the set of divergence-free vector fields -- is a Lie subalgebra of $\d$. $\mathcal{S}$ and $\s$ inherit the Lie bracket and Lie exponential from $\D$ and $\d$. Note that $\s$ is precisely the constraint set $\Nu(\nabla \cdot \ph) = \Im (\Pi_{\1,K}) \subset \V$ appearing throughout Sections \ref{problem_structure_sec}-\ref{gradient_flows_sec}; the incompressibility constraint there is the statement that admissible velocity fields lie in this subalgebra.

The group and algebra actions of $\mathcal{S}$ and $\s$ on $\P$ are identical to those of $\D$ and $\d$. There is one small complication here, in that the action of $\mathcal{S}$ on $\P$ is no longer transitive, and so $\P$ does not form a homogeneous space for $\mathcal{S}$. This is exactly the problem of reachability discussed in Section \ref{reachability_sec}. However, this may be remedied by restricting $\P$ to the reachable set $\Reach = \Reach_{\rho_0}$, i.e., the orbit of $\rho_0$ under $\mathcal{S}$. $\mathcal{S}$ acts transitively on any orbit, and the construction of stabilizer subgroups/subalgebras and cosets can then proceed unimpeded.

The second alteration is that we place a different metric on $\mathcal{S}$:
\begin{equation}
    \lb u^\phi , v^\phi \rb_\phi ~=~ \lb u , v \rb_\V ~:=~ \lb K u , K v \rb_{L^2} .
\end{equation}
In contrast to classical optimal transport, this metric is not always induced by an underlying metric on $\Omega$. For example, when $K=I$, it is, but when $K = D$, it is not. In this latter case, the metric $\lb \cdot , \cdot \rb_\phi$ depends not on how particles are moving through space, but on how they are moving \emph{relative to each other}. This metric also has different properties: it is not flat (or left-invariant), but it is right-invariant. As discussed in the previous subsection, this has consequences for how the reduction of the geodesic equations to a PDE for $v$ may be carried out. Again, in our setting this occurs by right-translation rather than by parallel transport.

\section{Discussion} \label{conclusion_sec}

In this paper, we formulated a version of the fluid mixing problem as an optimal control problem which trades off between fluid mixedness and mixing effort, connecting the classical theories of fluid mixing and optimal mass transport. We showed that the set of reachable fluid configurations can formally be endowed with a Riemannian metric induced by the control effort, that the mixing problem decomposes into a series of more fundamental geometric problems, and that flow fields which are maximally efficient at mixing must be geodesics in this Riemannian space. Using the Riemannian structure, we derived necessary conditions for optimality, geodesic equations, and gradient flows, and we introduced a generalization of the Helmholtz decomposition to help interpret them. We treated in detail both the energy-penalized ($L^2$) and enstrophy-penalized ($\dot{H}^1$) cases. The energy-penalized case recovers several classical results, including the incompressible Euler equation and its Clebsch representation, but appears to be ill-posed. This is precisely the kind of information the framework-first approach is meant to provide: it indicates that the kinetic energy is an unnatural penalty for mixing. The enstrophy-penalized case is expected to be much better behaved, and we provided numerical simulations of both geodesics and gradient flows in this setting. Finally, we showed how the Eulerian-Lagrangian correspondence and the construction of generalized transport metrics can be understood in terms of Lie group and Lie algebra actions on homogeneous spaces and Riemannian submersion.

The connection between incompressible fluid mixing and optimal transport runs in both directions. On one hand, the fluid mixing problem is essentially an optimal transport problem and shares much of its formal structure, although the transport occurs across \emph{scales}\footnote{This idea can be formalized in the frequency-domain picture as the transport of the ``mass'' in the Fourier transform to higher wave numbers -- consistent with the definition of mixedness via weak metrics such as negative-order Sobolev.} rather than across space. On the other hand, incompressible fluid mixing provides a case study of a ``generalized optimal transport'' problem. A natural question to ask of such problems is when they retain the structure of classical optimal transport -- in particular, its Riemannian structure, which provides the apparatus to do calculus on these spaces. This case study seems to provide a clue.

The Lie-algebraic construction of generalized transport metrics is remarkably fundamental, requiring only a Lie group or Lie algebra carrying a geometry and acting on a homogeneous space. Beyond the examples presented here, this structure has been identified in linear/Gaussian (Bures-Wasserstein) optimal transport \citep{malago2018wasserstein}, unbalanced optimal transport of densities \citep{gallouet2018camassa} and of Riemannian metrics \citep{bauer2026wasserstein}, and vector/matrix-valued optimal transport \citep{khesin2025universal}, and the same construction organizes the geometric-hydrodynamical treatment of many physical equations \citep{khesin2021geometric}. We expect that it underlies many other examples as well. This abundance motivates both the abstract study of generalized optimal transport metrics in their own right and their connections to geometric mechanics and control.

Many questions remain open. We have said little about the precise characterization of reachable sets, the final state selection problem, or how to compute trajectories achieving state transfer. The formal picture presented here also deserves rigorous analysis: while there is a substantial literature on the Lagrangian setting with the $L^2$ geometry, both the Eulerian setting and the $\dot{H}^1$ geometry remain comparatively unexplored. Finally, the connections between our framework and existing results on mixing rates, as well as the use of gradient flows for mixing, are natural directions for future work.

\backmatter

\bmhead{Acknowledgements}

Thank you to Igor Mezi\'{c}, Matt Jacobs, Katy Craig, Alexandre Leonelli, Jethro Warnett, and the attendees at the 2025 Optimal Transport Summer School at UC Santa Barbara for valuable discussions during the development of this work. We also thank the anonymous reviewers of the conference version of this work, whose careful reading identified an error in one of our proofs (the corrected proof appears here).

\section*{Declarations}

\begin{itemize}
\item Funding: This work was supported in part by NSF ECCS-2453491 and AFOSR FA9550-23-F-0014.
\item Conflict of interest/Competing interests: The authors have no competing interests to declare.
\item Ethics approval and consent to participate: Not Applicable.
\item Consent for publication: All authors consent to publication.
\item Data availability: No external datasets were analyzed in this study. All numerical results presented in this paper were generated by the simulation code (see below).
\item Materials availability: Not Applicable.
\item Code availability: The code used to generate the numerical results in this paper is available at \url{https://github.com/johnigraszekcode/Fluid-Mixing-Code}.
\item Author contribution: B.B. proposed the problem. M.E. developed the theoretical framework, wrote the proofs, and drafted the manuscript, with ongoing input from B.B. J.I. implemented the numerical simulations, produced the figures, and wrote the corresponding sections. All authors reviewed, edited, and approved the final manuscript.
\end{itemize}

\begin{appendices}

\end{appendices}

\bibliography{library}%

\end{document}